\newtheorem{theorem}{Theorem}[section]
\newtheorem{lemma}[theorem]{Lemma}
\newtheorem{corollary}[theorem]{Corollary}
\newtheorem{proposition}[theorem]{Proposition}
\newproof{proof}{Proof}
\newdefinition{definition}[theorem]{Definition}
\newdefinition{example}[theorem]{Example}
\newdefinition{remark}[theorem]{Remark}
\newdefinition{assumption}[theorem]{Assumption}
\numberwithin{equation}{section}
\newcommand{\reftext}[1]{#1}
\newcommand{\calP}{\mathcal{P}}
\newcommand{\calQ}{\mathcal{Q}}
\newcommand{\inv}{{-1}}
\begin{document}

\title{Classifying decomposition and wavelet coorbit spaces using coarse geometry}

\author{Hartmut F\"uhr}
\address{Lehrstuhl A f\"ur Mathematik, RWTH Aachen, 52056 Aachen, Germany}
\ead{fuehr@matha.rwth-aachen.de}

\author{Ren\'e Koch}
\address{Lehrstuhl A f\"ur Mathematik, RWTH Aachen, 52056 Aachen, Germany}
\ead{rene.koch@matha.rwth-aachen.de}

\begin{abstract}
This paper is concerned with the study of Besov-type decomposition spaces,
which are scales of spaces associated to suitably defined coverings of
the euclidean space $\mathbb{R}^{d}$, or suitable open subsets thereof.
A fundamental problem in this domain, that is currently not fully understood,
is deciding when two different coverings give rise to the same scale of
decomposition spaces.

In this paper, we establish a coarse geometric approach to this problem,
and show how it specializes for the case of wavelet coorbit spaces associated
to a particular class of matrix groups $H < GL(\mathbb{R}^{d})$ acting
via dilations. These spaces can be understood as special instances of decomposition
spaces, and it turns out that the question whether two different dilation
groups $H_{1},H_{2}$ have the same coorbit spaces can be decided by investigating
whether a suitably defined map $\phi : H_{1} \to H_{2}$ is a quasi-isometry
with respect to suitably defined word metrics. We then proceed to apply
this criterion to a large class of dilation groups called \emph{shearlet
dilation groups}, where this quasi-isometry condition can be characterized
algebraically. We close with the discussion of selected examples.
\end{abstract}
%
%
%
%
\maketitle
\section{Introduction}
\label{sec:introduction}

This paper is concerned with the study of two related classes of function
spaces on euclidean spaces, \textit{wavelet coorbit spaces} and
\textit{decomposition spaces}. Both classes can be understood as rather
sweeping generalizations of Besov spaces, and their inception dates back
to work by Feichtinger, Gr\"ochenig and Gr\"obner (e.g.
\cite{FeichtingerGroebnerBanachSpacesOfDistributions,FeiGroeAUniefApprToIntegrGroupRepAndTheirAtomicDec,FeiGroeBanSpaRelToIntGrpRepI,FeiGroeBanSpaRelToIntGrpRepII})
in the 1980s. We refer to \cite{MR3682610} for more background information
on this development, and in particular for an explanation of the relationship
of these notions to the influential work of Frazier and Jawerth
\cite{MR808825}.

The main thrust of the initial work was to provide a unified view onto
a variety of function spaces, specifically Besov spaces and modulation
spaces, using notions from abstract harmonic and Fourier analysis. Coorbit
spaces are defined using group representation theory
\cite{FeiGroeBanSpaRelToIntGrpRepI,FeiGroeBanSpaRelToIntGrpRepII}, and
special instances of coorbit spaces have been studied in a variety of settings
and for various classes of underlying locally compact groups in recent
years; see e.g.
\cite{DahlkeShearletCoorbitSpacesCompactlySupported,DahlkeShearletCoorbitSpacesAssociatedToFrames,FuehCooSpaAndWavCoeDecOveGenDilGro,FuRT,FePa,fischer2018heisenbergmodulation}
for a small sample of the literature. The class of
\textit{admissible dilation groups} underlying higher dimensional wavelet
coorbit spaces turns out to be a particularly rich source of examples
\cite{FuehCooSpaAndWavCoeDecOveGenDilGro}, see
\begin{itemize}
\item \cite{FuDiss} for a classification of admissible dilation groups
in dimension two up to conjugacy;
\item \cite{FuehrContinuousWaveletTransformsWithAbelian} for the systematic
construction and classification (up to conjugacy) of abelian admissible
dilation groups;
\item
\cite{DahlkeShearletCoorbitSpacesCompactlySupported,DahlkeShearletCoorbitSpacesAssociatedToFrames}
for the initial definition of shearlet dilation groups in dimension two
and higher;
\item \cite{DahlkeHaeuTesCooSpaTheForTheToeSheTra} for the introduction
of \textit{Toeplitz shearlet dilation groups} in dimension three and higher;
\item \cite{FuRT,AlbertiEtAl2017} for the introduction and systematic
construction of generalized shearlet dilation groups in arbitrary dimensions
(comprising the previous examples);
\item \cite{FuehrCurreyOussa} for a classification of admissible dilation
groups in dimension three, up to conjugacy.
\end{itemize}
We emphasize that the items on this list do not refer to single dilation
groups, but rather to classes of groups, most of which tend to become quite
large for higher dimensions.\looseness=1

By contrast, decomposition spaces are of a more Fourier analytic nature.
Whereas coorbit spaces depend on the choice of a particular group representation,
the construction of decomposition spaces departs from a certain covering
of the frequency domain. After their inception in
\cite{FeichtingerGroebnerBanachSpacesOfDistributions}, the theory of decomposition
spaces lay somewhat dormant for a while, but their usefulness in capturing
approximation-theoretic features of anisotropic systems such as curvelets
was realized later by Borup and Nielsen, see e.g.
\cite{BorupNielsenFrameDecompositionOfDecOfSpaces,LabateShearletSmoothnessSpaces}.
A further major boost to the subject was provided by the transformative
work of Voigtlaender
\cite{Voigtlaender2015PHD,VoigtlaenderEmbeddingsOfDecompositionSpaces},
which is also the foundation of the current paper. At this point in time,
large classes of function spaces have been recognized as special instances
of the decomposition space formalism, as the following list shows:
\begin{itemize}
\item $\alpha $-modulation spaces \cite{Groebner1992PhD};
\item curvelet smoothness spaces
\cite{BorupNielsenFrameDecompositionOfDecOfSpaces};
\item shearlet smoothness spaces
\cite{LabateShearletSmoothnessSpaces};
\item homogeneous and inhomogeneous \textit{anisotropic} Besov spaces
\cite{Bo,FuCh};
\item wave packet smoothness spaces \cite{BytVoi};
\item wavelet coorbit spaces associated to admissible dilation group, i.e.,
\textit{all examples on the previous list}
\cite{FuehrVoigtlWavCooSpaViewAsDecSpa});
\item wavelet coorbit spaces associated to
\textit{integrably admissible dilation groups}, see
\cite{fuehrvelthoven2020coorbit}. This class properly contains the class
mentioned in the previous item, but also the class of homogeneous anisotropic
Besov spaces.
\end{itemize}
Again, most of the items on this list represent classes or families of
constructions rather than a single one, and these classes can become quite
large with increasing dimension.

While coorbit and decomposition spaces rest on somewhat distinct mathematical
foundations, the resulting spaces share an important common approximation-theoretic
interpretation. What is common both to coorbit and decomposition spaces
is the existence of a family of vectors
$(\eta _{x})_{x \in \mathcal{X}}$ in a suitable Hilbert space
$\mathcal{H}$ that acts as a \textit{continuous frame} of
$\mathcal{H}$, i.e., guaranteeing a norm equivalence
%
\begin{equation}
\label{eqn:transform_norm_equiv}
\| f \|_{\mathcal{H}} \asymp \|(\langle f, \eta _{x} \rangle )_{x
\in \mathcal{X}}\|_{L^{2}(\mathcal{X},d\mu )}~,
\end{equation}
with respect to a suitably chosen measure $\mu $ on $\mathcal{X}$, as well
as an associated \textit{inversion formula}
\begin{equation*}
f = \int _{\mathcal{X}} \langle f, \eta _{x} \rangle \widetilde{\eta}_{x}
d\mu (x)~,
\end{equation*}
with a suitably chosen \textit{dual system}
$(\widetilde{\eta}_{x})_{x \in \mathcal{X}} \subset \mathcal{H}$. The operator
\begin{equation*}
V_{\eta }: f \mapsto (\langle f, \eta _{x} \rangle )_{x \in
\mathcal{X}}
\end{equation*}
mapping $f$ to its expansion coefficients can be understood as a generalized
wavelet transform. Typically, the parameter space $\mathcal{X}$ provides
an interpretation of the elements $\eta _{x}$ of the continuous frame as
basic building blocks, and the inversion formula expresses $f$ as a continuous
expansion in these building blocks.

For concreteness, let us quickly sketch the case of generalized wavelet
systems: Given a dilation group $H< GL(\mathbb{R}^{d})$, we let
$\mathcal{X} = \mathbb{R}^{d} \rtimes H$ denote the subgroup of the full
affine group of $\mathbb{R}^{d}$ generated by the translations in
$\mathbb{R}^{d}$ and $H$, and define the quasi-regular representation
\begin{equation*}
\pi : \mathbb{R}^{n} \rtimes H \to \mathcal{U}(L^{2}(\mathbb{R}^{n}))~,~
\left ( \pi (x,h) f \right ) (y) = |{\mathrm{det}}(h)|^{-1/2} f(h^{-1}(y-x))~,
\end{equation*}
and for suitably chosen $\eta \in L^{2}(\mathbb{R}^{n})$, we let
\begin{equation*}
\eta _{(x,h)} = \pi (x,h) \eta ~.
\end{equation*}
The element $(x,h) \in \mathcal{X}$ corresponds to a wavelet
$\eta _{(x,h)}$ centered at $x$ and scaled by $h \in H$. Very often, explicit
parametrizations of $H$ allow to interpret the scaling variable $h$ in
$(x,h)$ (and thus $\eta _{(x,h)}$) further, e.g. as a combination of isotropic
scaling and rotation in the case of the similitude group
\cite{AnMuVa}, or as a combination of anisotropic scaling and shearing
in the case of a shearlet dilation group
\cite{DahlkeShearletCoorbitSpacesAssociatedToFrames}. In this group-theoretic
context, the norm equivalence \reftext{(\ref{eqn:transform_norm_equiv})} becomes
a norm \textit{equality}, when $\eta $ is chosen as an
\textit{admissible vector} and $\mu $ is chosen as the left Haar measure
on $G$, and the inversion formula holds with
$\widetilde{\eta}_{(x,h)} = \eta _{(x,h)}$.

One can then proceed to define norms on elements of
$L^{2}(\mathbb{R}^{n})$ by imposing integrability conditions that are more
stringent than the $L^{2}$-norm entering in the Hilbert space norm equivalence
\reftext{(\ref{eqn:transform_norm_equiv})}, i.e., by introducing weights
$\nu $ and considering integrability exponents $p<2$, leading to considering
norms of the type
\begin{equation*}
\| V_{\eta }f \|_{L^{p}_{\nu}}~.
\end{equation*}
Furthermore, discretization results allow to replace the (typically continuously
indexed) systems $(\eta _{x})_{x \in \mathcal{X}}$ by suitably chosen discrete
subsystems -- in fact, (quasi-)Banach frames -- corresponding to discrete
subsets $\mathcal{X}_{d} \subset \mathcal{X}$, while preserving norm equivalences
such as
\begin{equation*}
\| V_{\eta }f \|_{L^{p}_{\nu}} \asymp \| V_{\eta }f |_{\mathcal{X}_{d}}
\|_{\ell ^{p}_{\nu}}~,
\end{equation*}
see e.g.
\cite{FeichtingerGroebnerBanachSpacesOfDistributions,FeiGroeAUniefApprToIntegrGroupRepAndTheirAtomicDec,FeiGroeBanSpaRelToIntGrpRepI,FeiGroeBanSpaRelToIntGrpRepII}.
For $p<2$ and constant weights this norm equivalence attains additional relevance: Here the
discretization results of the just cited references allow to conclude, that vectors having $p$-summable frame coefficients
have a nontrivial non-linear approximation rate with respect to the frame,
with the decay rate increasing as $p$ decreases. In this way, both coorbit
and decomposition spaces associated to integrability exponents
$p,q <2$ have natural interpretations as spaces of sparse signals with
respect to the respective systems of building blocks. These spaces therefore
capture the approximation-theoretic properties of the building blocks.

Much of the foundational work in
\cite{FeichtingerGroebnerBanachSpacesOfDistributions,FeiGroeAUniefApprToIntegrGroupRepAndTheirAtomicDec,FeiGroeBanSpaRelToIntGrpRepI,FeiGroeBanSpaRelToIntGrpRepII}
was devoted to proving that the definitions were \textit{consistent}, i.e.,
essentially independent of various design choices that enter into the construction
of the systems. In the case of wavelet coorbit spaces, these results were
mostly related to the proper choice of the \textit{analyzing wavelet}
$\eta $ entering the definition of the wavelet system
$(\eta _{(x,h)})_{(x,h) \in \mathcal{X}} = (\pi (x,h) \eta )_{(x,h)
\in \mathcal{X}}$.

By contrast, the influence of the \textit{primary} choices in the design
of the system of building blocks, i.e., the choice of dilation group in
the case of generalized wavelet systems, and the choice of covering in
the case of decomposition spaces, is much less understood. Generally speaking,
one very much expects that qualitatively different primary choices will
result in different approximation-theoretic properties of the building
blocks; for systems such as curvelets or shearlets, this expectation was
the driving factor for their inception. That said, the understanding what
``qualitatively different'' actually means in this context is obviously
an important part of this discussion, and it is currently not fully developed.

Prior to the work of Voigtlaender, with few exceptions such as
\cite{LabateShearletSmoothnessSpaces}, most of the work on coorbit or decomposition
spaces tended to stay within the confines provided by a fixed primary choice,
i.e., tended to concentrate on, say, $\alpha $-modulation spaces, or on
shearlet coorbit spaces. Voigtlaender's publications since 2015, specifically
\cite{Voigtlaender2015PHD,FuehrVoigtlWavCooSpaViewAsDecSpa,VoigtlaenderEmbeddingsOfDecompositionSpaces},
systematically develop techniques that allow to cross these boundaries
between different classes of spaces, by providing sharp embedding results
between decomposition spaces associated to different coverings, or into
classical smoothness spaces.

By contrast to the scope of the mentioned papers
\cite{FuehrVoigtlWavCooSpaViewAsDecSpa,Voigtlaender2015PHD,VoigtlaenderEmbeddingsOfDecompositionSpaces},
the aims of this paper are somewhat more modest and elementary. We address
the following fundamental question, both for decomposition spaces and for
wavelet coorbit spaces: When do two initially different primary design
choices (e.g., different coverings or dilation groups) result in the
\textit{same} scales of associated spaces? Our central contribution essentially
amounts to rewriting the pertinent criteria formulated in
\cite{VoigtlaenderEmbeddingsOfDecompositionSpaces} by introducing a novel
ingredient to the discussion, namely \textit{coarse geometry}, and demonstrating
that this new perspective can be used effectively.

\subsection{Structure and overview of the paper}
\label{sec1.1}

This paper is based on parts of the PhD thesis
\cite{KochDoktorarbeit} by the second author, with some results substantially
expanded.

As mentioned at the end of the previous subsection, we address two main
questions:
\begin{enumerate}
\item When do two coverings result in the same scale of decomposition spaces?
\item When do two dilation groups possess the same scale of coorbit spaces?
\end{enumerate}
As pointed out in the previous subsection, Question 2 is a special case
of Question 1, and both are essentially answered by results in
\cite{VoigtlaenderEmbeddingsOfDecompositionSpaces}. However, as with many
results in this domain, the precise criteria are somewhat unintuitive,
and tend to be cumbersome to apply in concrete settings. It is one of the
main contributions of our paper to rewrite the criteria in terms of suitable
metrics, following an initial observation made in
\cite{FeichtingerGroebnerBanachSpacesOfDistributions}, and to demonstrate
the usefulness of this reformulation with the help of various examples.

We now give an overview of the paper. The following summary glosses over
various technicalities (such as additional conditions on coverings, or
matters related to the question how coorbit spaces associated to different
groups can be understood to coincide, or the role of connectedness issues),
that are explained in more detail in the subsequent text.

Section~\ref{sec:preliminaries} recalls the fundamentals of decomposition
spaces and coorbit spaces, including the definitions of the various spaces,
and their basic properties. Decomposition spaces, as used throughout the
paper, rely on the notion of \emph{(structured) admissible covering}. We
will concentrate on decomposition spaces associated to weighted mixed
$L^{p}$-norms, i.e., to inner norms $\| \cdot \|_{L^{p}}$ and outer norms
$\| \cdot \|_{\ell ^{q}_{v}}$. The main takeaway from subsection \ref{subsect:dspace} is the definition of the notion of
\textit{weak equivalence} of coverings, and its relevance for the associated
decomposition spaces. \reftext{Lemmas~\ref{cor:WeakEquivalenceOfInducedCoverings} and \ref{lem:weak_equiv_suff}} show that weak equivalence of two admissible
coverings is equivalent to the fact that the associated scales of decomposition
spaces coincide. They also formulate a rigidity result stating that if
two decomposition spaces associated to different coverings coincide for
\textit{some} pair of integrability exponents $(p,q) \neq (2,2)$, then
the full scales of decomposition spaces agree. A complementary result that
is of particular relevance to wavelet coorbit spaces addresses the question
when dual coverings of \textit{different} sets give rise to identical decomposition
spaces, see \reftext{Theorem~\ref{thm:different_freq_supp}}.

We then recount the relevant results of wavelet coorbit theory in higher
dimensions, in particular the notion of admissible dilation groups
$H$ and their unique open dual orbits $\mathcal{O} = H^{T} \xi $, for suitably
chosen $\xi \in \mathbb{R}^{d}$. The decomposition space description of
the coorbit spaces associated to the quasi-regular representation of
$\mathbb{R}^{d} \rtimes H$ is obtained via the so-called
\textit{induced covering} arising from the dual action of $H$ on
$\mathcal{O}$, see \reftext{Theorem~\ref{thm:FourierIsoCoorbitDecSpaces}}. We then
introduce the notion of \textit{coorbit equivalence} for two dilation groups
$H_{1},H_{2}$, expressing when their coorbit spaces coincide; see \reftext{Definition~\ref{defn:coorbit_equivalent}}. We combine \reftext{Theorem~\ref{thm:FourierIsoCoorbitDecSpaces}} with the results characterizing when
two dual coverings result in the same scale of decomposition spaces, and
with \reftext{Theorem~\ref{thm:different_freq_supp}}, to obtain an important intermediate
characterization of coorbit equivalence in \reftext{Theorem~\ref{thm:coorbit_equiv_dual_orbits}}.

Section~\ref{sect:metric_reformulation} then proceeds to introduce metric
language to the discussion. We first give a short review of the relevant
notions from coarse geometry in Subsection \ref{subsect:elem_coarse}. A
covering $\mathcal{P}$ of a frequency set $\mathcal{O}$ allows to introduce,
in a very natural way, a related metric $d_{\mathcal{P}}$, and it turns
out that weak equivalence of $\mathcal{P}$ to a second covering
$\mathcal{Q}$ of the same set $\mathcal{O}$ is equivalent to the fact that
$id_{\mathcal{O}}: (\mathcal{O}, d_{\mathcal{P}}) \to (\mathcal{O},d_{
\mathcal{Q}})$ is a quasi-isometry, or equivalently, a coarse equivalence;
see \reftext{Theorem~\ref{thm:WeakEquivQuasiIso}}.

As already mentioned, it is important to point out that the definition
of the metric goes back to the original source
\cite{FeichtingerGroebnerBanachSpacesOfDistributions}, and Proposition
3.8 of the mentioned paper is a relevant precursor of \reftext{Theorem~\ref{thm:WeakEquivQuasiIso}}. However, weak equivalence of coverings was
initially only known to be a \emph{sufficient} criterion for the property
that two coverings define the same scale of decomposition spaces; the converse
was proved later in \cite{Voigtlaender2015PHD}.

Section~\ref{sect:metric_coorbit} transfers the results obtained for decomposition
spaces to the coorbit setting. We begin by reviewing basic notions from
metric group theory in Subsection \ref{subsect:word_metrics}. Subsection \ref{subsect:CoarseEquivalenceOfOrbitAndGroup} establishes that the canonical
projection map from the group onto the dual orbit is a quasi-isometry.
Subsection \ref{subsect:cc_eq} contains one of the main results of this
paper, namely \reftext{Theorem~\ref{thm:MainTheoremEquivalenceOfGroups}}, which combines
the observations from Subsection \ref{subsect:CoarseEquivalenceOfOrbitAndGroup} with the criteria from the
previous sections to formulate a metric criterion for coorbit equivalence:
Two admissible groups $H_{1}$ and $H_{2}$ can only be coorbit equivalent
if their dual orbits coincide, i.e.,
$\mathcal{O} = H_{1}^{T} \xi _{0} = H_{2}^{T} \xi _{0}$ for some suitable
$\xi _{0} \in \mathbb{R}^{d}$. If that condition is fulfilled, consider
the two associated canonical projections
$p_{\xi _{0}}^{H_{1}} : H_{1} \ni h \mapsto h^{-T} \xi _{0} \in
\mathcal{O}$, and $p_{\xi _{0}}^{H_{2}} : H_{2} \to \mathcal{O}$ defined
analogously. Let
$(p_{\xi _{0}}^{H_{2}})^{-1} : \mathcal{O} \to H_{2}$ denote any right inverse
of $p_{\xi _{0}}^{H_{2}}$. Then $H_{1}$ and $H_{2}$ are coorbit equivalent
if and only if
$(p_{\xi _{0}}^{H_{2}})^{-1} \circ p_{\xi _{0}}^{H_{1}}: (H_{1},d_{H_{1}})
\to (H_{2},d_{H_{2}})$ is a quasi-isometry with respect to any choice of
word metrics $d_{H_{1}},d_{H_{2}}$ on $H_{1},H_{2}$ associated to suitable
relatively compact neighborhoods of unity. Again, the quasi-isometry property
is equivalent to coarse equivalence. Thus coorbit equivalence has been
translated to a concrete geometric group theory problem.

Section~\ref{sect:coorbit_equiv_shearlet} contains an illustration that
this translation can be put to use for the characterization of coorbit
equivalence within a large example class, namely that of generalized shearlet
dilation groups. The main result of that section is \reftext{Theorem~\ref{thm:char_ce_shearlet}}, which contains a concise characterization of
coorbit equivalence for shearlet dilation groups. The fact that this equivalence
leads to rather stringent conditions on the groups under consideration
emphasizes the richness of coorbit theory in higher dimensions. The results
indicate that in higher dimensions, coorbit equivalence of different groups
is a fairly rare phenomenon. While this observation may have been expected,
it is the main contribution of this paper to provide sharp criteria, methods
for their verification, and explicit classes of groups corroborating this
expectation.

The closing section contains various examples that further illustrate the
usefulness of the approach developed in this paper.

\section{Fundamentals of coorbit and decomposition spaces}
\label{sec:preliminaries}

\subsection{Decomposition spaces}
\label{subsect:dspace}

The starting point for the definition of decomposition spaces is the notion
of an \emph{admissible covering} $\mathcal{Q}=(Q_{i})_{i\in I}$ of some
open set $\mathcal{O} \subset \mathbb{R}^{d}$ (see \cite{FeichtingerGroebnerBanachSpacesOfDistributions}),
which is a family of nonempty sets $Q_{i} \subset \mathbb{R}^{d}$ such
that
\begin{enumerate}[ii)]
\item[i)] $\bigcup _{i\in I} Q_{i} = \mathcal{O}$ and
\item[ii)]
$\sup _{i\in I} \sharp \{j\in I: Q_{i} \cap Q_{j} \neq \emptyset \}<
\infty $.
\end{enumerate}
Throughout this paper, we will concentrate on the class of \emph{(tight)
structured admissible covering}, see Definition 2.5 of
\cite{VoigtlaenderEmbeddingsOfDecompositionSpaces}. This means that
$Q_{i} = T_{i} Q + b_{i}$ with
$T_{i}\in \mathrm{GL}(\mathbb{R}^{d})$, $b_{i}\in \mathbb{R}^{d}$ with
an open, precompact set $Q$, and the involved matrices fulfill
%
\begin{equation}
\label{eqn:str_cov_norm}
\sup _{i,j \in I : Q_{j} \cap Q_{j} \neq \emptyset} \| T_{i}^{-1} T_{j}
\| < \infty .
\end{equation}

The next ingredient in the definition of decomposition spaces is a special
partition of unity $\Phi =(\varphi _{i})_{i\in I}$ subordinate to
$\mathcal{Q}$, also called $\mathrm{L}^{p}$-BAPU (bounded admissible partition
of unity), with the following properties
\begin{enumerate}[iii)]
\item[i)]
$\varphi _{i} \in C_{c}^{\infty}(\mathcal{O})\quad \forall i\in I$,
\item[ii)]
$\sum _{i\in I} \varphi _{i}(x)=1 \quad \forall x\in \mathcal{O}$,
\item[iii)] $\varphi _{i}(x)=0$ for $x\in \mathbb{R}^{d}\setminus Q_{i}$ and
$i\in I$,
\item[iv)] if $1\leq p \leq \infty $:
$\sup _{i\in I}\Vert \mathcal{F}^{-1} \varphi _{i}\Vert _{\mathrm{L}^{1}}<
\infty $,%

if $0<p<1$:\quad
$\sup _{i\in I}|\det (T_{i})|^{\frac{1}{p}-1}\Vert \mathcal{F}^{-1}
\varphi _{i}\Vert _{\mathrm{L}^{p}}<\infty $.
\end{enumerate}
Here, $\mathcal{F}$ denotes the usual Fourier transform of a function in
$\mathrm{L^{2}}(\mathbb{R}^{d})$ defined by
\begin{align*}
\mathcal{F}f(\xi ):= \int _{\mathbb{R}^{d}} f(x)e^{-2\pi i\langle x,
\xi \rangle}\mathrm{d}x
\end{align*}
for $\xi \in \mathbb{R}^{d}$. We also use the notation
$\widehat{f}:=\mathcal{F}(f)$. The definition of decomposition spaces requires
one last ingredient, namely a weight $(u_{i})_{i\in I}$ such that there
exists $C>0$ with $u_{i} \leq C u_{j}$ for all
$i,j \in I: Q_{i} \cap Q_{j} \neq \emptyset $. A weight with this property
is also called \emph{$\mathcal{Q}$-moderate}. The interpretation of this
property is that the value of $(u_{i})_{i\in I}$ is comparable for indices
corresponding to sets which are ``close'' to each other. Finally, we define
the
\textit{(Fourier-side) decomposition space with respect to the covering
$\mathcal{Q}$ and the weight $(u_{i})_{i \in I}$ with integrability exponents
$0<p,q\leq \infty $} as
%
\begin{align}
\mathcal{D}(\mathcal{Q}, \mathrm{L}^{p}, \ell ^{q}_{u}):=\{f\in
\mathcal{D}'(\mathcal{O}): \Vert f\Vert _{\mathcal{D}(\mathcal{Q},
\mathrm{L}^{p}, \ell ^{q}_{u})}< \infty \}
\end{align}
for
%
\begin{align}
\Vert f\Vert _{\mathcal{D}(\mathcal{Q}, \mathrm{L}^{p}, \ell ^{q}_{u})}:=
\left \Vert \left (u_{i} \cdot \Vert \mathcal{F}^{-1}(\varphi _{i} f)
\Vert _{\mathrm{L}^{p}(\mathbb{R}^{d})} \right )_{i\in I}\right
\Vert _{\ell ^{q}(I)}.
\end{align}
As the notation suggests, the decomposition spaces are independent of the
precise choice of $\Phi $ \cite[Corollary 3.4.11]{Voigtlaender2015PHD}.

A crucial concept is the definition of the set of neighbors of a covering.

\begin{definition}[\cite{FeichtingerGroebnerBanachSpacesOfDistributions} Definition 2.3]%
\label{def:SetOfNeighbors}
For a covering $\mathcal{Q}= (Q_{i})_{i\in I}$ of $\mathcal{O}$ with
$Q_{i}\subset \mathcal{O}$ for all $i\in I$, we define the
\textit{set of neighbors of a subset $J\subset I$} as
\begin{align*}
J^{*}:=\Set{i\in I| \exists j\in J:\ Q_{i} \cap Q_{j} \neq \emptyset}.
\end{align*}
By induction, we set $J^{0*}:= J$ and
$J^{(n+1)*} = \left (J^{n*}\right )^{*}$ for $n\in \mathbb{N}_{0}$. Moreover,
we use the shorthand notations $i^{k*}:=\Set{i}^{k*}$ and define
$Q_{i}^{k*}:=\bigcup _{j\in i^{k*}}Q_{j}$ for $i\in I$ and
$k\in \mathbb{N}_{0}$.
\end{definition}

In the remaining part of this subsection, we take a look at relations between
different coverings. The ultimate purpose of these relations is the clarification
when two coverings lead to the same scale of decomposition spaces. While
our exposition follows \cite{Voigtlaender2015PHD}, most of the definitions
hark back to \cite{FeichtingerGroebnerBanachSpacesOfDistributions}.%

\begin{definition}[\cite{Voigtlaender2015PHD} Definition 3.3.1.]%
\label{def:NeighborsAndEquivalence}
Let $\mathcal{Q}= (Q_{i})_{i\in I}$ and
$\mathcal{P} = (P_{j})_{j\in J}$ be families of subsets of
$\mathbb{R}^{d}$.
\begin{enumerate}[iii)]
\item[i)] We define the \textit{set of $\mathcal{P}$-neighbors of}
$i\in I $ by
$J_{i}:= \Set{j\in J | Q_{i}\cap P_{j} \neq \emptyset}$. More generally,
we call $J_{i}$ and $I_{j}$ \textit{intersection sets} for the coverings
$\mathcal{Q}$ and $\mathcal{P}$.
\item[ii)] We call $\mathcal{Q}$ \textit{weakly subordinate} to
$\mathcal{P}$ if
$N(\mathcal{Q}, \mathcal{P}):=\sup _{i\in I}\mathopen{\lvert }J_{i}
\mathclose{\rvert } < \infty $.

The quantity $N(\mathcal{P}, \mathcal{Q})$ is defined analogously, and
we call $\mathcal{Q}$ and $\mathcal{P}$ \textit{weakly equivalent} if
$N(\mathcal{P}, \mathcal{Q})<\infty $ and
$N(\mathcal{Q}, \mathcal{P})<\infty $.
\item[iii)] We call $\mathcal{Q}$ \textit{almost subordinate} to
$\mathcal{P}$ if there exists $k\in \mathbb{N}_{0}$ such that for every
$i\in I$ there exists an $j_{i}\in J$ with
$Q_{i}\subset P_{j_{i}}^{k*}$. If $k=0$ is a valid choice, then we call
$\mathcal{Q}$ \textit{subordinate} to $\mathcal{P}$.
\item[iv)] We call $\mathcal{Q}$ \textit{weakly equivalent} to
$\mathcal{P}$ if $\mathcal{Q}$ is weakly subordinate to
$\mathcal{P}$ and $\mathcal{P}$ is weakly subordinate to
$\mathcal{Q}$.
\end{enumerate}
\end{definition}

The relevance of these notions, in particular of weak equivalence, is spelled
out in the next two lemmas. The formulation of the next lemma is a special
case of the cited result.

\begin{lemma}[\cite{VoigtlaenderEmbeddingsOfDecompositionSpaces} Theorem 6.9]%
\label{cor:WeakEquivalenceOfInducedCoverings}
Let $\mathcal{Q}=(Q_{i})_{i\in I}, \mathcal{P}=(P_{j})_{j\in J}$ be two
structured admissible coverings of the open set
$\mathcal{O}\subset \mathbb{R}^{d}$. If $\mathcal{Q}$ and
$\mathcal{P}$ are not weakly equivalent, then
\begin{equation*}
\mathcal{D}(\mathcal{Q}, L^{p}, \ell ^{q}_{u_{1}})\neq \mathcal{D}(
\mathcal{P}, L^{p}, \ell ^{q}_{u_{2}})
\end{equation*}
for all $\mathcal{Q}$-moderate weights $u_{1}:I\to (0,\infty )$, for all
$\mathcal{P}$-moderate weights $u_{2}:J\to (0,\infty )$ and all
$p, q \in (0,\infty ]$ with $(p,q) \neq (2,2)$.
\end{lemma}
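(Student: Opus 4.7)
The plan is to proceed by contraposition: assume that $\mathcal{D}(\mathcal{Q}, L^p, \ell^q_{u_1}) = \mathcal{D}(\mathcal{P}, L^p, \ell^q_{u_2})$ as sets for some admissible data with $(p,q) \neq (2,2)$, and derive that $\mathcal{Q}$ and $\mathcal{P}$ are weakly equivalent. Since the decomposition spaces are (quasi-)Banach spaces and each is continuously embedded in the other once they agree as sets (a standard closed-graph argument for Fourier-side decomposition norms), an open-mapping argument upgrades the set-equality to equivalence of the two (quasi-)norms. The task thus reduces to producing, whenever $\mathcal{Q}$ is \emph{not} weakly subordinate to $\mathcal{P}$, a sequence of test functions witnessing that the ratio of norms is unbounded.

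Fix BAPUs $\Phi = (\varphi_i)_{i\in I}$ for $\mathcal{Q}$ and $\Psi = (\psi_j)_{j\in J}$ for $\mathcal{P}$, and pick $(i_n) \subset I$ with $|J_{i_n}| \to \infty$. The natural candidates are the test functions $f_n = \varphi_{i_n}$ (possibly multiplied by an oscillatory factor to realize a sharp Bernstein lower bound on $L^p$-norms). The $\mathcal{Q}$-norm of $f_n$ involves only the uniformly bounded set of $\mathcal{Q}$-neighbors of $i_n$, so by (\ref{eqn:str_cov_norm}), $\mathcal{Q}$-moderateness of $u_1$, and a Bernstein estimate, it is comparable to $u_{1,i_n}\cdot \|\mathcal{F}^{-1}\varphi_{i_n}\|_{L^p}$. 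The $\mathcal{P}$-norm, by contrast, is an $\ell^q_{u_2}$-aggregation over $J_{i_n}$ of $|J_{i_n}|$ nontrivial terms $\|\mathcal{F}^{-1}(\psi_j \varphi_{i_n})\|_{L^p}$. A careful book-keeping of the $p$- and $q$-exponents against the Jacobian factors of the structured coverings, using $\mathcal{P}$-moderateness of $u_2$ to keep the weights within a fixed constant of $u_{1,i_n}$, shows that this $\mathcal{P}$-norm grows at a strictly faster rate in $|J_{i_n}|$ than the $\mathcal{Q}$-norm, provided $(p,q) \neq (2,2)$. The exclusion of $(p,q)=(2,2)$ is intrinsic: at that exponent pair the Fourier-side decomposition norm reduces via Plancherel to a weighted $L^2$-norm whose weight only involves $\sum_i |\varphi_i|^2$, leaving no covering-sensitive information to be extracted.

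The main obstacle, and the reason we prefer to invoke Voigtlaender's machinery rather than reprove the statement from scratch, is producing the quantitatively sharp lower bound on the $\mathcal{P}$-norm of $f_n$ uniformly across the full range $0<p,q\leq \infty$ and across arbitrary moderate weights. This lower bound must simultaneously handle the possibly degenerating geometry of the intersections $P_j \cap Q_{i_n}$ and the interplay between the two structured normalizations of $\mathcal{Q}$ and $\mathcal{P}$. Exactly this analysis is carried out in \cite[Theorem~6.9]{VoigtlaenderEmbeddingsOfDecompositionSpaces}, so the present lemma follows by specializing that theorem to the setting of two structured admissible coverings of the \emph{same} open set $\mathcal{O}$ and by checking that the structuredness of $\mathcal{Q}$ and $\mathcal{P}$ together with moderateness of $u_1,u_2$ match the hypotheses made there.
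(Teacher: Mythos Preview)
The paper does not prove this lemma at all; it is stated purely as a citation of \cite[Theorem~6.9]{VoigtlaenderEmbeddingsOfDecompositionSpaces}. Your proposal ultimately does the same thing, so in that sense it matches the paper's treatment.

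The heuristic sketch you give beforehand is broadly the right picture, but one step is inaccurate and worth flagging. You write that $\mathcal{P}$-moderateness of $u_{2}$ ``keeps the weights within a fixed constant of $u_{1,i_{n}}$''. Moderateness is an \emph{internal} condition: it only compares $u_{2}(j)$ to $u_{2}(j')$ when $P_{j}\cap P_{j'}\neq\emptyset$, and it says nothing about $u_{1}$. Moreover, the indices $j\in J_{i_{n}}$ all have $P_{j}\cap Q_{i_{n}}\neq\emptyset$, but as $|J_{i_{n}}|\to\infty$ they need not be $\mathcal{P}$-neighbors of one another, so even the values $u_{2}(j)$ across $J_{i_{n}}$ are not controlled by moderateness alone. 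Since the lemma is asserted for \emph{all} moderate weights $u_{1},u_{2}$ with no compatibility assumption between them, the lower-bound argument has to be robust against this, which is precisely the delicate point you correctly identify as the ``main obstacle'' and hand off to Voigtlaender. Given that, the sketch is best read as motivation rather than as an outline of a self-contained proof.
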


The exception $(p,q)\neq (2,2)$ is necessary to exclude trivial cases:
In the case of $(p,q) = (2,2)$ the associated decomposition spaces are
just weighted $\mathrm{L}^{2}$ spaces, by the Plancherel Theorem. In particular,
if $\mathcal{O} \subset \mathbb{R}^{d}$ is open and of full measure, and
the weight $v$ is constant, one finds that
$\mathcal{D}(\mathcal{P},L^{2},\ell ^{2}_{v}) = L^{2}(\mathbb{R}^{d})$,
for all admissible coverings $\mathcal{P}$.

Weak subordinateness and equivalence of coverings are important assumptions
for a multitude of sufficient criteria for embeddings of decomposition
spaces and their equality, as {developed} in
\cite{VoigtlaenderEmbeddingsOfDecompositionSpaces}. The following statement
is \cite[Lemma 6.11]{VoigtlaenderEmbeddingsOfDecompositionSpaces}. The
statement about the range $0 \le p,q \le \infty $ is justified by the remark
following the cited lemma.
%
\begin{lemma}
\label{lem:weak_equiv_suff}
Let $1\leq p,q\leq \infty $ and let
$\emptyset \neq \mathcal{O}\subset \mathbb{R}^{d}$ be open. Further, let
$\mathcal{Q}=(Q_{i})_{i\in I}, \mathcal{P}=(P_{j})_{j\in J}$ be two tight
structured admissible coverings of $\mathcal{O}$, and let $u_{1}$ be a
$\mathcal{Q}$-moderate weight and $u_{2}$ a $\mathcal{P}$-moderate weight.

If $\mathcal{Q}$ and $\mathcal{P}$ are weakly equivalent and there exists
$C>0$ such that $C^{-1}u_{1}(i) \leq u_{2}(j)\leq Cu_{1}(i)$ for all
$i\in I$ and $j\in J$ with $Q_{i}\cap P_{j} \neq \emptyset $, then
\begin{align*}
\mathcal{D}(\mathcal{Q}, L^{p}, \ell ^{q}_{u_{1}})= \mathcal{D}(
\mathcal{P}, L^{p}, \ell ^{q}_{u_{2}})
\end{align*}
with equivalent norms.

If all sets in the coverings are connected, the conclusion holds for the
range $0 \le p,q \le \infty $.
\end{lemma}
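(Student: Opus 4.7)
The plan is to prove the continuous embedding $\mathcal{D}(\mathcal{Q}, L^p, \ell^q_{u_1}) \hookrightarrow \mathcal{D}(\mathcal{P}, L^p, \ell^q_{u_2})$; the reverse direction then follows by the complete symmetry of the hypotheses in $\mathcal{Q}$ and $\mathcal{P}$. Fix BAPUs $\Phi = (\varphi_i)_{i \in I}$ and $\Psi = (\psi_j)_{j \in J}$ subordinate to $\mathcal{Q}$ and $\mathcal{P}$ respectively, which is legitimate since the decomposition space norm is independent of the choice of BAPU. For each $j \in J$, the relation $\sum_{i \in I} \varphi_i \equiv 1$ on $\mathcal{O}$ together with $\mathrm{supp}(\psi_j) \subset P_j$ gives
\[
\mathcal{F}^{-1}(\psi_j f) \;=\; \sum_{i \in I_j} \bigl(\mathcal{F}^{-1} \psi_j\bigr) \ast \mathcal{F}^{-1}(\varphi_i f),
\]
where $I_j = \{i \in I : Q_i \cap P_j \neq \emptyset\}$ satisfies $|I_j| \le N(\mathcal{Q}, \mathcal{P}) < \infty$ by weak equivalence.

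For the Banach range $1 \le p \le \infty$, I would apply Young's convolution inequality to obtain
\[
\|\mathcal{F}^{-1}(\psi_j \varphi_i f)\|_{L^p} \;\le\; \|\mathcal{F}^{-1}\psi_j\|_{L^1}\cdot \|\mathcal{F}^{-1}(\varphi_i f)\|_{L^p},
\]
where $\sup_j \|\mathcal{F}^{-1}\psi_j\|_{L^1} < \infty$ by the BAPU condition (iv) for $\Psi$. Invoking the weight compatibility $u_2(j) \asymp u_1(i)$ whenever $Q_i \cap P_j \neq \emptyset$, and summing inside the $\ell^q$-norm via a (quasi-)triangle inequality using the uniform bound $|I_j| \le N(\mathcal{Q}, \mathcal{P})$, the final sum over $j \in J$ is controlled by a Schur-type bound that exploits the fact that each fixed $i$ occurs in at most $N(\mathcal{P}, \mathcal{Q})$ of the sets $I_j$. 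Together these yield $\|f\|_{\mathcal{D}(\mathcal{P}, L^p, \ell^q_{u_2})} \lesssim \|f\|_{\mathcal{D}(\mathcal{Q}, L^p, \ell^q_{u_1})}$.

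The main obstacle is the quasi-Banach range $0 < p < 1$, where Young's inequality is no longer available. The standard substitute is a Plancherel-Polya-type convolution estimate: for a tempered distribution $h$ with compact Fourier support in a set $K$, one has $\|\varphi \ast h\|_{L^p} \lesssim_K \|\varphi\|_{L^p}\|h\|_{L^p}$ for $0 < p < 1$. Applied with $h = \mathcal{F}^{-1}(\varphi_i f)$ (Fourier-supported in $Q_i = T_i Q + b_i$) and $\varphi = \mathcal{F}^{-1}\psi_j$, this yields a constant that, after transport back to the reference set $Q$, scales like $|\det T_i|^{1/p - 1}$; this is exactly the normalization appearing in the quasi-Banach BAPU condition, so that the uniform bound $\sup_i |\det T_i|^{1/p-1} \|\mathcal{F}^{-1}\psi_j\|_{L^p}$ takes over the role previously played by $\sup_j \|\mathcal{F}^{-1}\psi_j\|_{L^1}$. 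The connectedness hypothesis on the covering sets enters precisely here: the constant in the Plancherel-Polya estimate depends on the geometric shape of the Fourier support, and connectedness, combined with the structured-covering norm bound, permits a uniform reduction to a fixed reference configuration across all $i \in I$. Once Young's inequality has been replaced in this manner, the remainder of the argument from the Banach range carries over essentially verbatim.
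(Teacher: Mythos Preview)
The paper does not give its own proof of this lemma; it is quoted directly from \cite[Lemma 6.11]{VoigtlaenderEmbeddingsOfDecompositionSpaces}, with the quasi-Banach extension attributed to the remark following that lemma. Your Banach-range argument is the standard one and is essentially what lies behind the cited result (modulo a harmless swap of $N(\mathcal{Q},\mathcal{P})$ and $N(\mathcal{P},\mathcal{Q})$ when bounding $|I_j|$).

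Your quasi-Banach sketch, however, misidentifies where connectedness enters and leaves a genuine gap. The Fourier-side convolution estimate for $0<p<1$ produces a constant depending on the measure of the Fourier support, and to convert the factor $|\det T_i|^{1/p-1}$ (coming from $Q_i$) into the BAPU quantity $|\det S_j|^{1/p-1}\|\mathcal{F}^{-1}\psi_j\|_{L^p}$ for $\mathcal{P}$ you need uniform cross-covering control of the type $\|T_i^{-1}S_j\|\lesssim 1$ whenever $Q_i\cap P_j\neq\emptyset$. The structured-covering norm bound \eqref{eqn:str_cov_norm} only gives this \emph{within} a single covering, and weak equivalence by itself does not bridge the two. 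What is actually needed is \emph{almost} subordinateness, and connectedness of the covering sets is precisely the hypothesis that upgrades weak subordinateness to almost subordinateness (this is \cite[Corollary 2.13]{VoigtlaenderEmbeddingsOfDecompositionSpaces}; compare the paper's own use of this implication in the proof of Lemma~\ref{lem:intrinsic_weight_equiv}). Once almost subordinateness is in hand, iterating the within-covering norm bound along a $\mathcal{P}$-chain covering $Q_i$ yields the required cross-covering estimate. Your phrase ``connectedness \ldots\ permits a uniform reduction to a fixed reference configuration'' does not capture this mechanism, and as written your argument does not close.
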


Under suitable assumptions, the two lemmas show that an equality
\begin{equation*}
\mathcal{D}(\mathcal{Q}, L^{p_{1}}, \ell ^{q_{1}}_{u_{1}}) =
\mathcal{D}(\mathcal{P}, L^{p_{2}}, \ell ^{q_{2}}_{u_{2}})
\end{equation*}
with non-trivial exponents $(p_{1},q_{1}) \neq (2,2)$ and/or
$(p_{2},q_{2}) \neq (2,2)$ holds if and only if $\mathcal{Q}$ and
$\mathcal{P}$ are weakly equivalent, with
$(p_{1},q_{1}) = (p_{2},q_{2})$ and the involved weights are equivalent.
Note that one such equality entails
\begin{equation*}
\mathcal{D}(\mathcal{Q}, L^{p}, \ell ^{q}_{u_{1}}) = \mathcal{D}(
\mathcal{P}, L^{p}, \ell ^{q}_{u_{2}})
\end{equation*}
for \emph{all} exponents $0 \le p,q \le \infty $ and all weights
$u_{1}$, with suitably chosen $u_{2}$.

If the coverings consist of open and connected sets, weak subordinateness
implies almost subordinateness
({cf.} \cite{Voigtlaender2015PHD} Corollary~3.3.4.). Consequently, we will mainly be interested in studying necessary
and sufficient conditions for the weak subordinateness of coverings.

While the results so far compare decomposition spaces associated to different
coverings $\mathcal{Q},\mathcal{P}$ of the same set $\mathcal{O}$, the
following Theorem examines pairs of decomposition spaces associated to
coverings of different sets $\mathcal{O}$, $\mathcal{O}'$. It is a special
case of
\cite[Theorem 6.9 $\frac{1}{2}$]{VoigtlaenderEmbeddingsOfDecompositionSpaces}.

\begin{theorem}
\label{thm:different_freq_supp}
Let
$\emptyset \neq \mathcal{O},\mathcal{O}' \subset \mathbb{R}^{d}$ open.
Let $\mathcal{Q} = (Q_{i})_{i \in I}$ denote an admissible covering of
$\mathcal{O}$, $\mathcal{P} = (P_{j})_{j \in J}$ denote an admissible covering
of $\mathcal{O}'$. Assume that either
$\mathcal{O}' \cap \partial \mathcal{O} \neq \emptyset $ or
$\mathcal{O} \cap \partial \mathcal{O}' \neq \emptyset $ holds, and that
$\mathcal{O} \cap \mathcal{O}'$ is unbounded. Let
$p_{1},p_{2},q_{1},q_{2} \in (0,\infty ]$. Then
\begin{equation*}
\forall f \in C_{c}^{\infty}(\mathcal{O} \cap \mathcal{O}')~:~ \| f
\|_{D(\mathcal{Q},L^{p_{1}},\ell ^{q_{1}}_{v})} \asymp \| f \|_{D(
\mathcal{P},L^{p_{2}},\ell ^{q_{2}}_{w})}
\end{equation*}
can only hold in the trivial case, i.e., when
$(p_{1},q_{1}) = (2,2) = (p_{2},q_{2})$ and $v_{i} \asymp w_{j}$ whenever
$Q_{i} \cap P_{j} \neq \emptyset $.
\end{theorem}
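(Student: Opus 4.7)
The plan is to argue by contradiction: assume the displayed norm equivalence holds for some choice of exponents and weights, and use the topological hypotheses to build test functions on which the two sides react incompatibly to scaling unless $(p_{1},q_{1})=(p_{2},q_{2})=(2,2)$ and the weights are equivalent on intersecting indices. The underlying mechanism is that for structured admissible coverings, the $L^{p}$--$\ell^{q}$-decomposition norm of a bump concentrated in one covering set depends on the affine scale of that set through a power determined by the exponents; mismatched scales on the two sides then force those powers to cancel, which only happens at $(2,2)$.

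Step 1 extracts a scale mismatch from the topological data. Without loss of generality assume $\mathcal{O}' \cap \partial \mathcal{O} \neq \emptyset$. Pick $x_{0} \in \mathcal{O}' \cap \partial \mathcal{O}$ and a sequence $x_{n} \in \mathcal{O} \cap \mathcal{O}'$ with $x_{n} \to x_{0}$. For any $i_{n} \in I$, $j_{n} \in J$ with $x_{n} \in Q_{i_{n}} \cap P_{j_{n}}$, the set $P_{j_{n}} = S_{j_{n}} P + c_{j_{n}} \subset \mathcal{O}'$ stays inside a fixed neighborhood of $x_{0}$ in $\mathcal{O}'$, so the local $\mathcal{P}$-scale at $x_{n}$ is bounded below. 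By contrast, $Q_{i_{n}} = T_{i_{n}} Q + b_{i_{n}} \subset \mathcal{O}$ cannot contain $x_{0} \in \partial \mathcal{O}$ nor any full neighborhood of it, and the affine rigidity of the fixed precompact model $Q$ forces the local $\mathcal{Q}$-scale at $x_{n}$, at least in some direction transverse to $\partial \mathcal{O}$, to decay as $x_{n} \to x_{0}$. Unboundedness of $\mathcal{O} \cap \mathcal{O}'$ then lets me repeat this construction at arbitrarily remote boundary points, producing an infinite family of pairs $(i_{n}, j_{n})$ with essentially disjoint supports along which the $\mathcal{Q}$-scale genuinely shrinks relative to the $\mathcal{P}$-scale.

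Step 2 probes the two norms. Transport a fixed bump $\psi \in C_{c}^{\infty}(Q)$ to the affine frame of $Q_{i_{n}}$ to obtain $f_{n} \in C_{c}^{\infty}(\mathcal{O} \cap \mathcal{O}')$; similarly build mirror test functions concentrated at the $\mathcal{P}$-scale, and finite sums of such bumps along a divergent tail of the sequence. Using the affine structure of both coverings and the $L^{p}$-BAPU property, a routine scaling computation yields closed-form asymptotics for both decomposition norms on each of these test families. Single-bump tests give a relation of the form $v_{i_{n}}/w_{j_{n}} \asymp (\text{scale ratio})^{1/p_{1} - 1/p_{2}}$; sum-of-bumps tests along the divergent tail force $q_{1} = q_{2}$; spread tests using bumps at the larger $\mathcal{P}$-scale, which cover many $Q_{i}$, yield duality-type relations of the form $1/p_{2} + 1/q_{1} = 1$ and its counterpart. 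Combining all of these with the scale divergence from Step 1 and the moderation of the weights is incompatible with any non-$L^{2}$ integrability exponent, and pins down $p_{1} = p_{2} = q_{1} = q_{2} = 2$. At that point both norms collapse to weighted $L^{2}(\mathcal{O} \cap \mathcal{O}')$-norms via Plancherel, and localizing the single-bump test to an arbitrary nonempty intersection $Q_{i} \cap P_{j}$ extracts $v_{i} \asymp w_{j}$.

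The main obstacle is Step 1: converting the topological hypotheses into a quantitative scale divergence along an infinite family of essentially disjoint index pairs. The boundary hypothesis is what obstructs the $\mathcal{Q}$-sets near $x_{n}$ from extending across $\partial \mathcal{O}$; the affine rigidity of structured admissibility is what turns this topological obstruction into quantitative scale decay; and the unboundedness hypothesis is essential to spread the mismatch over infinitely many pairwise disjoint index pairs, so that it cannot be neutralized by weight moderation concentrated near a single boundary point. Once Step 1 is secured, Step 2 is a tedious but essentially standard Littlewood--Paley-style scaling calculation.
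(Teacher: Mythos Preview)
The paper does not prove this theorem; it is quoted as a special case of Theorem~6.9$\frac{1}{2}$ in Voigtlaender's \emph{Embeddings of decomposition spaces}, so there is no in-paper argument to compare against directly. Your outline, however, has a genuine gap in Step~1 that the cited proof avoids.

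You read the two topological hypotheses as a single phenomenon: boundary behaviour near $x_{0}$ gives scale degeneration, and unboundedness of $\mathcal{O}\cap\mathcal{O}'$ lets you ``repeat this construction at arbitrarily remote boundary points.'' But the hypotheses guarantee no such thing. The assumption is only that $\mathcal{O}'\cap\partial\mathcal{O}$ is nonempty---it may well be a single point $x_{0}$---while $\mathcal{O}\cap\mathcal{O}'$ is unbounded possibly in a direction having nothing to do with $\partial\mathcal{O}$. So you cannot manufacture an infinite family of essentially disjoint index pairs \emph{at which the scale ratio diverges}; what you actually get is scale degeneration localized near one point and unboundedness somewhere else entirely. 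In Voigtlaender's argument these two hypotheses enter at different places in a case analysis and are not fused the way you attempt. Without an infinite disjoint family carrying the scale mismatch, the sum-of-bumps tests in Step~2 that are supposed to pin down $q_{1}=q_{2}$ and the ``duality-type relations'' have nothing to act on.

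A second, smaller issue: the theorem as stated assumes only \emph{admissible} coverings, not structured admissible ones, so the affine form $Q_{i}=T_{i}Q+b_{i}$ on which your entire scaling computation in both steps rests is not formally available. The paper does restrict attention to structured coverings elsewhere, so this may be an intended implicit hypothesis, but you should flag it. Even granting the affine structure, Step~1 does not go through as written.
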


Note that the assumptions on $\mathcal{O},\mathcal{O}'$ are fulfilled if
they are distinct open and dense subsets. Density and openness of
$\mathcal{O}$ imply
$\partial \mathcal{O} = \mathbb{R}^{d} \setminus \mathcal{O}$, and thus
$\mathcal{O}' \cap \partial \mathcal{O} = \emptyset $ can only happen if
$\mathcal{O}' \subsetneq \mathcal{O}$. In that case however we get
$\partial \mathcal{O}' \cap \mathcal{O} = (\mathbb{R}^{d} \setminus
\mathcal{O}') \cap \mathcal{O} \neq \emptyset $. Furthermore,
$\mathcal{O} \cap \mathcal{O}'$ is dense, hence unbounded.

\begin{remark}
\label{rem:covering_connected}
The results in \cite{VoigtlaenderEmbeddingsOfDecompositionSpaces} exhibit
an occasional subtle influence of the various assumptions on the elements
of the coverings $\mathcal{P}$ and $\mathcal{Q}$ on the comparison of associated
decomposition spaces. An instance can be witnessed in \reftext{Lemma~\ref{lem:weak_equiv_suff}}, where connectedness assumptions of the covering
sets, as well as tightness and structuredness of the cover, have an influence
on the range of summability parameters for which the conclusions of the lemma hold. The connectedness assumption will
play a more explicit role in the formulation and proof of our metric characterization
of weak equivalence in \reftext{Theorem~\ref{thm:WeakEquivQuasiIso}}.

For further remarks on these issues, we refer to \ref{rem:covering_connected_II} below.
\end{remark}

For the discussion of coorbit spaces associated to different dilation groups,
a certain class of weights will be of particular importance. Throughout
the paper, we will use $|A|$ to denote the Lebesgue measure of a Borel
set $A \subset \mathbb{R}^{d}$.

\begin{definition}
\label{defn:intrinsic_weight}
Let $\mathcal{Q} = (Q_{i})_{i \in I}$ denote a (tight) structured admissible
covering, and $\alpha \in \mathbb{R}$. The
\textit{intrinsic weight with exponent $\alpha $} is the family
$(u_{i})_{i \in I}$ defined by
\begin{equation*}
u_{i} = |Q_{i}|^{\alpha}~.
\end{equation*}
\end{definition}

Note that for $Q_{i} = T_{i}Q + b_{i}$, one obtains
$|Q_{i}| \asymp |\det (T_{i})|$. Hence the intrinsic weight is indeed
$\mathcal{Q}$-moderate, by the condition \reftext{(\ref{eqn:str_cov_norm})} on structured
coverings.

The following observation notes that intrinsic weights are robust under
weak equivalence.
%
\begin{lemma}
\label{lem:intrinsic_weight_equiv}
Let $\mathcal{Q} = (Q_{i})_{i \in I}$ and
$\mathcal{P} = (P_{j})_{j \in J}$ denote two tight structured admissible
coverings consisting of connected sets. If $\mathcal{Q}$ and
$\mathcal{P}$ are weakly equivalent, then the associated intrinsic weights
are equivalent as well, i.e., there exists a constant $C>0$, such that
one has
\begin{equation*}
\forall i \in I \forall j \in J : Q_{i} \cap P_{j} \neq \emptyset
\Rightarrow C^{-1} \le \frac{|P_{j}|}{|Q_{i}|} \le C~.
\end{equation*}
\end{lemma}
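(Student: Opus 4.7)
The plan is a volume-counting argument. Fix $i \in I$ and $j \in J$ with $Q_i \cap P_j \neq \emptyset$. I will bound $|P_j|$ from above by covering $P_j$ with the sets $Q_k$ for $k \in I_j := \{k \in I : P_j \cap Q_k \neq \emptyset\}$, control $|I_j|$ by the weak-equivalence constant, and show that each $|Q_k|$ is comparable to $|Q_i|$ via the moderateness of the structured covering $\mathcal{Q}$. The matching lower bound $|Q_i| \le C'|P_j|$ then follows by an entirely symmetric argument that exchanges the roles of $\mathcal{Q}$ and $\mathcal{P}$.

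For the upper bound, since each $P_j \subset \mathcal{O} = \bigcup_{k \in I} Q_k$, I get $|P_j| \le \sum_{k \in I_j} |Q_k|$, and $|I_j| \le N(\mathcal{P},\mathcal{Q}) =: N < \infty$ by weak equivalence. Writing $Q_k = T_k Q + b_k$ gives $|Q_k| = |\det T_k|\,|Q|$, so the question reduces to bounding $|\det T_k|/|\det T_i|$ uniformly for $k \in I_j$. The structured-covering condition \eqref{eqn:str_cov_norm} yields a constant $C_1$, depending only on $\mathcal{Q}$, such that $|\det T_a|/|\det T_b| \le \|T_b^{-1} T_a\|^d \le C_1$ whenever $Q_a \cap Q_b \neq \emptyset$, so iterating along any chain $i = k_0, k_1, \dots, k_n = k$ in $\mathcal{Q}$ with $Q_{k_\ell} \cap Q_{k_{\ell+1}} \neq \emptyset$ produces $|\det T_k| \le C_1^n |\det T_i|$.

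The crux is therefore to show that any two $k, k' \in I_j$ can be joined by such an ambient-intersection chain whose members all lie in $I_j$ and whose length is at most $|I_j| - 1 \le N - 1$. This is where the connectedness hypothesis enters. Define $a \sim b$ on $I_j$ iff such a chain inside $I_j$ exists, and set $U := \bigcup_{\ell \sim k}(Q_\ell \cap P_j)$. Then $U$ is open in $P_j$ as a union of open sets, and its complement in $P_j$ is also open: any $x \in P_j \setminus U$ lies in some $Q_m \cap P_j$ with $m \in I_j$, necessarily $m \not\sim k$, and then $Q_m \cap P_j$ is disjoint from $U$ since any intersection $Q_m \cap Q_\ell \neq \emptyset$ with $\ell \sim k$ would force $m \sim k$. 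Connectedness of $P_j$ and nonemptiness of $U$ give $U = P_j$, so every $k' \in I_j$ is $\sim$-equivalent to $k$. Combining, $|P_j| \le N \cdot C_1^{N-1} |Q_i| \cdot (|Q|/|Q|) = N C_1^{N-1} |Q_i|$.

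The reverse inequality is obtained verbatim after swapping $\mathcal{Q} \leftrightarrow \mathcal{P}$ and $i \leftrightarrow j$, now using $N(\mathcal{Q},\mathcal{P}) < \infty$, the structuredness constant of $\mathcal{P}$, and the connectedness of $Q_i$. Taking $C$ as the maximum of the two resulting constants yields the claim. The main obstacle is the clopen argument of Step 2: without connected $P_j$, the elements of $I_j$ need not be linked by ambient intersections inside $\mathcal{Q}$, and the moderateness of $|\det T_\bullet|$ cannot be propagated from $Q_i$ to a generic $Q_k$ meeting $P_j$; the remaining steps are bookkeeping from the definitions.
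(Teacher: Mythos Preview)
Your proof is correct, and it takes a somewhat different route from the paper's argument.

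The paper proceeds by first invoking an external structural lemma (\cite[Corollary 2.13]{VoigtlaenderEmbeddingsOfDecompositionSpaces}): weak subordinateness of $\mathcal{Q}$ to $\mathcal{P}$, together with connectedness of the $Q_i$, upgrades to \emph{almost} subordinateness, i.e., there is a fixed $n$ with $Q_i \subset P_{j_0}^{n*}$ for some $j_0$. The volume bound then comes from counting the sets in the $n$-fold neighbor set $P_{j_0}^{n*}$ (at most $C_2^{n+1}$ of them, each of measure $\le C_1^n |P_{j_0}|$), and finally linking $|P_{j_0}|$ to $|P_j|$ via a chain of length $\le n+1$ inside $\mathcal{P}$. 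So the paper works entirely on the $\mathcal{P}$-side once it knows $Q_i$ sits in a controlled $\mathcal{P}$-cluster.

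You instead stay on the $\mathcal{Q}$-side: you cover $P_j$ by the at most $N(\mathcal{P},\mathcal{Q})$ sets $Q_k$ meeting it, and use the connectedness of $P_j$ (together with the openness of the $Q_k$) in a clopen argument to show that all such $k$ are linked to $i$ by $\mathcal{Q}$-neighbor chains of length $\le N-1$ lying inside $I_j$. The moderateness of $|\det T_\bullet|$ along such chains then controls each $|Q_k|$ by $|Q_i|$.

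Both arguments exploit connectedness in the same spirit, but yours is self-contained and avoids the detour through almost subordinateness, while the paper's approach isolates that structural fact as a reusable ingredient. The minor cosmetic $|Q|/|Q|$ factor in your final display is harmless.
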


\begin{proof}
Let $x \in Q_{i} \cap P_{j}$ be given, for suitable $i \in I$ and
$j \in J$. We first note that if $\mathcal{Q}$ is weakly subordinate to
$\mathcal{P}$, and consists of connected sets, then it is almost subordinate
to $\mathcal{P}$, by
\cite[Corollary 2.13]{VoigtlaenderEmbeddingsOfDecompositionSpaces}. Hence
there exists $n \in \mathbb{N}$ such that for each $i \in I$ there exists
$j_{0} \in J$ with
\begin{equation*}
Q_{i} \subset P_{j_{0}}^{n\ast}~.
\end{equation*}
Let $C_{1}, C_{2}\ge 1$ denote the constants such that
\begin{equation*}
\forall j,\ell \in J : P_{j} \cap P_{\ell }\neq \emptyset
\Rightarrow \frac{|P_{j}|}{|P_{\ell}|} \le C_{1}~,
\end{equation*}
which results from condition \reftext{(\ref{eqn:str_cov_norm})}, and
\begin{equation*}
C_{2} = \sup _{j \in J} \sharp \{ \ell \in J : P_{j} \cap P_{\ell }
\neq \emptyset \}~.
\end{equation*}
Then $P_{j_{0}}^{n\ast}$ arises as the union of at most
$\sum _{k=0}^{n} C_{2}^{k} \le C_{2}^{n+1}$ sets, each of measure at most
$C_{1}^{n} |P_{j_{0}}|$, i.e.,
\begin{equation*}
|Q_{i}| \le |P_{j_{0}}^{n \ast}| \le |P_{j_{0}}| C_{1}^{n} C_{2}^{n+1}~.
\end{equation*}
But the fact that $x \in P_{j} \cap P_{j_{0}}^{n \ast}$ furthermore implies
that
\begin{equation*}
\frac{|P_{j_{0}}|}{|P_{j}|} \le C_{1}^{n+1}~,
\end{equation*}
whence finally
\begin{equation*}
|Q_{i}| \le |P_{j}| C_{1}^{2n+1} C_{2}^{n+1}~.
\end{equation*}
Here it is important to observe that both constants $C_{1}$ and
$C_{2}$ are independent of $i \in I$. Hence this estimate shows one direction
of the desired equivalence, and the other one follows by symmetry.
\end{proof}

\subsection{Admissible groups and induced coverings}
\label{sec2.2}

For a closed matrix group $H\leq \mathrm{GL}(\mathbb{R}^{d})$, which we
also call \textit{dilation group} in the following, we define the group
$G:=\mathbb{R}^{d}\rtimes H$, generated by dilations with elements of
$H$ and arbitrary translations, with the group law
$(x,h)\circ (y,g) := (x+hy, hg)$. We denote integration with respect to
a left Haar measure on $H$ with $\mathrm{d}h$, the associated left Haar
measure on $G$ is then given by
$d(x,h)=\mathopen{\lvert }\det h \mathclose{\rvert }^{-1}\mathrm{d}x
\mathrm{d}h$. The Lebesgue spaces on $G$ are always defined through integration
with respect to a Haar measure. The group $G$ acts on the space
$\mathrm{L}^{2}(\mathbb{R}^{d})$ through the
\textit{quasi-regular representation} $\pi $ defined by
$[\pi (x,h)f](y):=\mathopen{\lvert }\det h \mathclose{\rvert }^{-1/2} f(h^{{-1}}(y-x))$
for $f\in \mathrm{L}^{2}(\mathbb{R}^{d})$. The
\textit{generalized continuous wavelet transform (with respect to
$\psi \in \mathrm{L^{2}}(\mathbb{R}^{d})$)} of $f$ is then given as the
function
$W_{\psi }f:G \to \mathbb{C}: (x,h)\mapsto \braket{f, \pi (x,h)\psi}$. We will sometimes use the notation $W_\psi^H f$, in places where an explicit reference to the choice of dilation group $H$ is required. 
Important properties of the map $W_{\psi}:f\mapsto W_{\psi }f$ depend on
$H$ and the chosen $\psi $. If the quasi-regular representation is
\textit{square-integrable}, which means that there exists
$\psi \neq 0$ with $W_{\psi }\psi \in \mathrm{L}^{2}(G)$, and irreducible,
then we call $H$ \textit{admissible}. In this case the map
$W_{\psi}:\mathrm{L}^{2}(\mathbb{R}^{d}) \to \mathrm{L}^{2}(G)$ is a multiple
of an isometry, which gives rise to the (weak-sense) inversion formula
%
\begin{equation}
\label{eqn:waverec}
f = \frac{1}{C_{\psi}}\int _{G} W_{\psi }f(x,h) \pi (x,h) \psi
\mathrm{d}(x,h) ~,
\end{equation}
i.e., each $f \in \mathrm{L}^{2}(\mathbb{R}^{d})$ is a continuous superposition
of the wavelet system. According to results in
\cite{FuehrWaveletFramesAndAdmissibilityInHigherDImensions},
\cite{FuehrGeneralizedCalderonConditionsAndRegularOrbitSpaces}, the admissibility
of $H$ can be characterized by the \textit{dual action} defined by
$\mathbb{R}^{d} \times H \to \mathbb{R}^{d}, (h,\xi ) \mapsto h^{-T}
\xi $. In fact, $H$ is admissible iff the dual action has a single open
orbit $\mathcal{O}:=H^{{-T}}\xi _{0}\subset \mathbb{R}^{d}$ of full measure
for some $\xi _{0}\in \mathbb{R}^{d}$ and additionally the isotropy group
$H_{\xi _{0}}:=\Set{h :p_{\xi _{0}}(h)=\xi _{0}}\subset H$ is compact;
see e.g.
\cite{FuehrGeneralizedCalderonConditionsAndRegularOrbitSpaces}.

Every admissible group gives rise to an associated a covering. This is
done using the \emph{dual action} by picking a \emph{well-spread} family
in $H$, i.e. a family of elements $(h_{i})_{i\in I} \subset H$ with the
properties
\begin{enumerate}[ii)]
\item[i)] there exists a relatively compact neighborhood $U\subset H$ of the
identity such that $\bigcup _{i\in I}h_{i} U= H$ -- we say
$(h_{i})_{i\in I}$ is \textit{$U$-dense} in this case -- and
\item[ii)] there exists a neighborhood $V\subset H$ of the identity such that
$h_{i}V \cap h_{j}V = \emptyset $ for $i\neq j$ -- we say
$(h_{i})_{i\in I}$ is \textit{$V$-separated} in this case.
\end{enumerate}
The \textit{dual covering induced by $H$} is then given by the family
$\mathcal{Q}=(Q_{i})_{i\in I}$, where
$Q_{i} = p_{\xi _{0}}(h_{i} U)$ for some $\xi _{0}$ with
$H^{-T}\xi _{0}=\mathcal{O}$. It can be shown that well-spread families
always exist, and that the induced covering is indeed a tight structured
admissible covering in the sense of decomposition space theory, for which
$\mathrm{L}^{p}$-BAPUs exist, according to Theorems 4.4.6 and 4.4.13 of
\cite{Voigtlaender2015PHD}. Furthermore, there always exist induced coverings
consisting of open and connected sets, an additional feature which can
facilitate the investigations in some cases. For ease of reference, we
state this as a lemma.

\begin{lemma}[\cite{KochDoktorarbeit} Corollary 2.5.9]%
\label{cor:ExistenceInducedConnectedCovering}
Let $H$ denote an admissible dilation group, with open dual orbit
$\mathcal{O}$. Then there always exists an induced {covering} of
$\mathcal{O}$ by $H$ that is a tight structured admissible covering consisting
of (path-) connected open sets.
\end{lemma}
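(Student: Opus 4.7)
The plan is to produce the well-spread family $(h_i)_{i \in I}$ and the accompanying open relatively compact neighborhood $U \subset H$ in such a way that $U$ itself is path-connected; the tightness, structuredness and admissibility of $\mathcal{Q} = (Q_i)_{i \in I}$ with $Q_i = p_{\xi_0}(h_i U)$ will then be delivered by the general theorems of \cite{Voigtlaender2015PHD} cited just above the statement, while path-connectedness and openness of the $Q_i$ will follow from elementary topological considerations applied to $p_{\xi_0}$.

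First, I would exploit that $H$, being a closed subgroup of $\mathrm{GL}(\mathbb{R}^d)$, is a Lie group. In any Lie group the identity admits a basis of symmetric, relatively compact, path-connected open neighborhoods (for instance, the image under the exponential map of a small symmetric open ball in the Lie algebra). I pick such a neighborhood $V$, and then use Zorn's lemma to obtain a maximal $V$-separated family $(h_i)_{i \in I}$, i.e.\ one satisfying $h_i V \cap h_j V = \emptyset$ for $i \neq j$. Setting $U := V \cdot V$, the set $U$ is open (being a union of translates of $V$), contains the identity, is relatively compact (the image of the relatively compact set $\overline{V} \times \overline{V}$ under continuous multiplication), and is path-connected (the continuous image of the path-connected product $V \times V$). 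Maximality of the family combined with symmetry of $V$ forces $U$-density: any $h \in H$ meets some $h_i V$ in $hV$, hence $h \in h_i V V^{-1} = h_i U$. Thus $(h_i)_{i \in I}$ is well-spread in the required sense.

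With $\mathcal{Q} = (Q_i)_{i \in I}$, $Q_i = p_{\xi_0}(h_i U)$, the cited Theorems 4.4.6 and 4.4.13 of \cite{Voigtlaender2015PHD} ensure that $\mathcal{Q}$ is a tight structured admissible covering admitting $\mathrm{L}^p$-BAPUs. It remains to see that the $Q_i$ are open and path-connected. Openness follows because the orbit map $p_{\xi_0}$ factors as $H \to H/H_{\xi_0} \xrightarrow{\sim} \mathcal{O}$; since $H_{\xi_0}$ is compact and $\mathcal{O}$ is the open orbit of the smooth dual action, the second arrow is a homeomorphism and the quotient map is open, so $p_{\xi_0}$ is a continuous open surjection. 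Path-connectedness of $Q_i$ then follows by continuity from the path-connectedness of $h_i U$, which is itself a translate of the path-connected set $U$ under the homeomorphism $L_{h_i}$.

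The only real obstacle I expect is the verification that a suitable $V$ (symmetric, relatively compact, path-connected, open) actually exists at the identity of $H$; this rests entirely on the Lie group structure and is the reason the statement is restricted to (closed) matrix groups. Everything else reduces either to a standard Zorn-type maximal-family argument or to routine topological bookkeeping around the orbit map $p_{\xi_0}$.
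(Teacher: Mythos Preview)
The paper does not include its own proof of this lemma; it is imported verbatim as Corollary 2.5.9 of the thesis \cite{KochDoktorarbeit}. Your argument is correct and is almost certainly what appears there: the essential points are (i) the Lie group structure of $H$ supplies a symmetric, relatively compact, path-connected open identity neighborhood $V$, (ii) a maximal $V$-separated family is automatically $U$-dense for $U = VV^{-1} = VV$, which is again open, relatively compact and path-connected, and (iii) continuity and openness of the orbit map $p_{\xi_0}$ transfer openness and path-connectedness from $h_iU$ to $Q_i$, while the cited results of \cite{Voigtlaender2015PHD} handle tightness, structuredness and admissibility. The openness of $p_{\xi_0}$ that you deduce from the quotient description is also recorded later in the paper as Lemma~\ref{lem:OrbitMapProperties}.
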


We call any induced covering that is a structured admissible covering consisting
of open and connected sets an
\textit{induced connected covering of $\mathcal{O}$ by }$H$. Furthermore,
two different induced coverings of the same group are always weakly equivalent,
see \cite{KochDoktorarbeit} Corollary 2.6.5.).

In Subsection \ref{subsect:CoarseEquivalenceOfOrbitAndGroup}, the next
result will be crucial.

\begin{lemma}[cf. \cite{FuehrVoigtlWavCooSpaViewAsDecSpa} Lemma 18]%
\label{lem:InducedCoveringNormCondition}
Let $(h_{i})_{i\in I}$ be a well-spread family in $H$ and let
$ K_{1}, K_{2} \subset \mathcal{O}$ be compact sets. Given $h\in H$, let
\begin{align*}
I_{h}(K_{1}, K_{2}):=
\Set{i\in I | h^{{-T}}K_{1} \cap h_{i}^{{-T}}K_{2} \neq \emptyset }.
\end{align*}
There are $C_{1} = C_{1}(K_{1},K_{2})>0$ and
$C_{2} = C_{2}(K_{1}, K_{2}, (h_{i})_{i\in I})>0$ with
$\mathopen{\lvert }I_{h}\mathclose{\rvert } \leq C_{2}$ for all
$h\in H$ and with
$\mathopen{}\mathclose{\left\lVert h^{T} h_{i}^{{-T}}\right\rVert }
\leq C_{1}$ for all $i\in I_{h}$.
\end{lemma}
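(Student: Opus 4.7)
The plan is to reduce membership in $I_h(K_1,K_2)$ to a single compact subset of $H$ that is independent of $h$, and then to derive the two conclusions by exploiting the well-spread property of $(h_i)_{i\in I}$. Setting $g_i := h^{-1}h_i$, the identity $(h^{-1}h_i)^{-T} = h^T h_i^{-T}$ shows that the condition $h^{-T}k_1 = h_i^{-T}k_2$ is equivalent to $k_1 = g_i^{-T}k_2$. Hence $i \in I_h(K_1,K_2)$ is equivalent to $g_i \in L$, where
\[
L := \{g \in H : g^{-T}K_2 \cap K_1 \neq \emptyset\}.
\]
The key point is that $L$ depends only on $K_1$ and $K_2$, not on $h$.

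The first main step is then to establish that $L$ is relatively compact in $H$. This is the content of properness of the dual action of $H$ on its open orbit $\mathcal{O}$, which rests on the transitivity of the action together with compactness of the isotropy group $H_{\xi_0}$ (a standard feature of admissible dilation groups). Once this is available, the norm bound is a short computation: observing that $h^T h_i^{-T} = (h_i^{-1}h)^T$ and using the transpose-invariance of the operator norm, one obtains
\[
\|h^T h_i^{-T}\| = \|h_i^{-1}h\| = \|g_i^{-1}\|,
\]
since $h_i^{-1}h = (hg_i)^{-1}h = g_i^{-1}$. Thus $\|h^T h_i^{-T}\| \le \sup_{g \in L}\|g^{-1}\| =: C_1(K_1,K_2)$, which is finite because inversion is continuous on the compact closure of $L$.

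For the cardinality bound, fix a relatively compact open neighborhood $V$ of the identity witnessing the $V$-separation of $(h_i)_{i\in I}$, shrinking $V$ if necessary. From $g_i \in L$ we have $h_i = h g_i \in hL$, so the pairwise disjoint family $(h_i V)_{i \in I_h}$ is contained in the relatively compact set $hLV$. Left-invariance of the Haar measure $\mu$ on $H$ then yields
\[
|I_h|\,\mu(V) = \sum_{i \in I_h}\mu(h_iV) \le \mu(hLV) = \mu(LV) < \infty,
\]
so $C_2 := \mu(LV)/\mu(V)$ works. As required in the statement, this depends on $K_1, K_2$ and on the separation neighborhood of the well-spread family, whereas $C_1$ depends only on $K_1, K_2$.

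The main obstacle I anticipate is the relative compactness of $L$, which expresses the properness of the dual action on $\mathcal{O}$. The algebraic reformulation $i \in I_h \Leftrightarrow g_i \in L$ and the subsequent norm and Haar-measure packing estimates are routine once this fact is available; properness itself is standard for admissible dilation groups, being ultimately a consequence of the compactness of $H_{\xi_0}$, and it is essentially the same ingredient underlying the cited \cite[Lemma~18]{FuehrVoigtlWavCooSpaViewAsDecSpa}.
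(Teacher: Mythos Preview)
The paper does not supply its own proof of this lemma; it is quoted from \cite{FuehrVoigtlWavCooSpaViewAsDecSpa}, Lemma~18. Your argument is correct and is essentially the standard one: the reduction $i\in I_h \Leftrightarrow h^{-1}h_i\in L$ with $L=\{g\in H: g^{-T}K_2\cap K_1\neq\emptyset\}$, followed by relative compactness of $L$ via properness of the dual action and a Haar-measure packing estimate using $V$-separation, is exactly how the cited result is obtained. Note also that the compactness step you invoke is precisely the content of \reftext{Lemma~\ref{lem:FelixCompactSetLemma}} later in the paper (there the compact set is written explicitly as $p_\xi^{-1}(K_1)\cdot H_\xi\cdot (p_\xi^{-1}(K_2))^{-1}$), so your appeal to properness is well grounded in the present framework.
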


\subsection{Coorbit theory and its connection to decomposition spaces}
\label{sec2.3}

Coorbit spaces are defined in terms of the decay behavior of the generalized
wavelet transform. To give a precise definition, we introduce weighted
mixed $\mathrm{L}^{p}$-spaces on $G$, denoted by
$\mathrm{L}^{p,q}_{v}(G)${.} By definition, this space is the set of functions
\begin{align*}
\left \{ f:G\to \mathbb{C} : \int _{H}\left ( \int _{\mathbb{R}^{d}}
\left | f(x,h) \right |^{p} v(x,h)^{p} \mathrm{d}x \right )^{q/p}
\frac{\mathrm{d}h}{|\det (h)|} <\infty \right \},
\end{align*}
with natural (quasi-)norm
$\Vert \cdot \Vert _{\mathrm{L}^{p,q}_{v}}$. This definition is valid for
$0< p,q <\infty $, for $p=\infty $ or $q=\infty $ the essential supremum
has to be taken at the appropriate place instead. The function
$v:G\to \mathbb{R}^{>0}$ is a measurable weight function that fulfills
the condition $v(ghk)\leq v_{0}(g)v(h)v_{0}(k)$ for some that is \textit{submultiplicative}, i.e. fulfills $v_0(gk) \le v_0(g) v_0(k)$. If this last condition on $v$ is satisfied, we call
$v$ left- and right moderate with respect to $v_{0}$. Thus, the expression
$\Vert W_{\psi }f\Vert _{\mathrm{L}^{p,q}_{v}}$ can be read as a measure
of wavelet coefficient decay of $f$. We consider weights which only depend
on $H$. The coorbit space
$\mathrm{Co}\left (\mathrm{L}^{p,q}_{v}(\mathbb{R}^{d}\rtimes H)
\right )$ is then defined as the space
%
\begin{align}
\label{def:Coorbit}
\left \{ f\in \mathcal{(H}^{1}_{w})^{\neg }: W_{\psi }f \in W(
\mathrm{L}^{p,q}_{v}(\mathbb{R}^d \rtimes H))\right \}
\end{align}
for a suitable wavelet $\psi $ fulfilling various technical conditions,
and some control weight $w$ associated to $v$. The space
$(\mathcal{H}^{1}_{w})^{\neg}$ denotes the space of antilinear functionals
on
$ \mathcal{H}^{1}_{w} :=\left \{ f\in \mathrm{L}^{2}(\mathbb{R}^{d}): W_{
\psi }f \in \mathrm{L}^{1}_{w}(G)\right \} $ and $W(Y)$ for a function
space $Y$ on $G$ denotes the Wiener amalgam space defined by
$ W_{Q}(Y):=\{f\in \mathrm{L}^{\infty}_{\text{loc}}(G) | M_{Q}f\in Y\} $
with quasi-norm $\|f\|_{W_{Q}(Y)}:=\|M_{Q}f\|_{Y}$ for
$f\in W_{Q}(Y)$, where the \textit{maximal function} $M_{Q}f$ for some suitable
unit neighborhood $Q\subset G$ is
$ M_{Q}f:G\to [0,\infty ],\ x\mapsto \operatorname{ess\ sup}_{y\in xQ}|f(y)|$.

The appearance of the Wiener amalgam space in \reftext{(\ref{def:Coorbit})} is necessary
to guarantee consistently defined quasi-Banach spaces in the case
$\{p,q\}\cap (0,1)\neq \emptyset $, see
\cite{Rauhut2007CoorbSpacTheoForQuasiBanSpa} and
\cite{Voigtlaender2015PHD}. In the classical coorbit theory for Banach
spaces, which was developed in \cite{FeiGroeBanSpaRelToIntGrpRepI,FeiGroeBanSpaRelToIntGrpRepII}, the Wiener amalgam space is replaced
by $\mathrm{L}^{p,q}_{v}(G)$ and this change leads to the same space for
$p,q\geq 1$, see \cite{Rauhut2007CoorbSpacTheoForQuasiBanSpa}.

Many useful properties of these spaces are known and hold in the quasi-Banach
space case as well as in the Banach space case. The most prominent examples
of coorbit spaces associated to generalized wavelet transforms are the
homogeneous Besov spaces and the modulation spaces. However, each shearlet
group, a class of groups we introduce in the next subsection, gives rise
to its own scale of coorbit spaces, as well; see
\cite{kutyniok2012shearlets},
\cite{DahlkeHaeuTesCooSpaTheForTheToeSheTra} and
\cite{FuehCooSpaAndWavCoeDecOveGenDilGro}.

\begin{remark}
\label{rem:coorbit_reservoir}
The use of the reservoir space $(\mathcal{H}^{1}_{w})^{\neg}$ is a major
obstacle for the comparison of coorbit spaces associated to different dilation
groups. From a purely set-theoretic point of view, different dilation groups
necessarily result in disjoint coorbit spaces. However, this distinction
is somewhat beside the point, and it can be remedied by the following observations;
see also \cite[Corollary 4.4]{FeiGroeBanSpaRelToIntGrpRepI}. By the Fischer-Riesz
theorem, the inclusion
$\mathcal{H}^{1}_{w} \subset L^{2}(\mathbb{R}^{d})$ provides a canonical
embedding
$L^{2}(\mathbb{R}^{d}) \subset (\mathcal{H}^{1}_{w})^{\neg}$, which allows
to identify $L^{2}(\mathbb{R}^{d})$ with a subspace of
$(\mathcal{H}^{1}_{w})^{\neg}$. Now the fact that the wavelet transform
is $L^{2}$-isometric (and the associated inversion formula) implies that
the range of this identification map is precisely given by
$Co(L^{2}(\mathbb{R}^{d} \rtimes H))$.

We next observe that
$Co(L^{p}(\mathbb{R}^{d} \rtimes H)) \subset Co(L^{2}(\mathbb{R}^{d}
\rtimes H))$ for $1 \le p \le 2$. This follows from the fact that the discrete
sequence space $Y_{d}$, associated to
$Y =L^{p}(\mathbb{R}^{d} \times H)$ according to Definition 3.4 of
\cite{FeiGroeBanSpaRelToIntGrpRepI}, turns out to be
$Y_{d} = \ell ^{p} \subset \ell ^{2}$ (see
\cite[Lemma 3.5(e)]{FeiGroeBanSpaRelToIntGrpRepI}). Furthermore,
$Co(Y) \subset Co(Z)$ holds iff $Y_{d} \subset Z_{d}$, by
\cite[Theorem 8.4]{FeiGroeBanSpaRelToIntGrpRepII}, hence
$Co(L^{p}) \subset Co(L^{2})$.

Combining this embedding with the identification of
$Co(L^{2}(\mathbb{R}^{d} \rtimes H))$ with $L^{2}(\mathbb{R}^{d})$, we find
the following canonical identification for $1 \le p \le 2$, valid for a
suitably chosen analyzing vector $\psi $:
\begin{equation*}
Co(L^{p}(\mathbb{R}^{d} \rtimes H)) = \{ f \in L^{2}(\mathbb{R}^{d}) : W_{
\psi}^{H} f \in L^{p}(\mathbb{R}^{d} \rtimes H) \}~,
\end{equation*}
with the obvious norm. But now the new reservoir space
$L^{2}(\mathbb{R}^{d})$ is independent of $H$.

We note that this identification extends to all weighted
$L^{p,q}_{v}(\mathbb{R}^{d} \rtimes H)$ for which the associated sequence
space fulfills $\ell ^{p,q}_{v} \subset \ell ^{2}$, in particular for
$1 \le p,q \le 2$ and constant weight $v$.

An alternative approach to the just-mentioned embeddings of coorbit spaces,
pointed out to us by the referee, is obtained by observing the following
well-known inclusion relations between Wiener amalgam spaces, valid for $0<p<2$,
\begin{equation*}
W(L^{p}) \subset W(L^{2}) \subset L^{2}~,
\end{equation*}
which via equation \reftext{(\ref{def:Coorbit})} translates to the inclusions of
the associated coorbit spaces, and has the advantage of directly generalizing
to $p<1$.
\end{remark}

The next definition will be useful for the transfer of weights from the
coorbit to the decomposition space setting.

\begin{definition}[\cite{Voigtlaender2015PHD} Definition 4.5.3.]%
\label{def:DecompositionWeight}
For $q\in (0,\infty ]$ and a weight $m: H \to (0,\infty )$, we define the
weight
$ m^{(q)}:H\to (0,\infty ),\ h\mapsto \mathopen{\lvert }\det (h)
\mathclose{\rvert }^{\frac{1}{2} - \frac{1}{q}} m(h) $. Here, we set
$\frac{1}{\infty}:=0$.
\end{definition}

The connection between coorbit spaces and decomposition spaces is given
by the next theorem. For the Banach space case, we also refer to
\cite{FuehrVoigtlWavCooSpaViewAsDecSpa}. A recent extension beyond the
irreducible setting can be found in \cite{fuehrvelthoven2020coorbit}.

\begin{theorem}[\cite{Voigtlaender2015PHD} Theorem 4.6.3]%
\label{thm:FourierIsoCoorbitDecSpaces}
Let $\mathcal{Q}$ be a covering of the dual orbit $\mathcal{O}$ induced
by $H$, $0<p,q\leq \infty $ and $u=(u_{i})_{i\in I}$ a suitable weight,
then the Fourier transform
$ \mathcal{F}: \mathrm{Co}\left (\mathrm{L}^{p,q}_{v}(\mathbb{R}^{d}
\rtimes H)\right ) \to \mathcal{D}(\mathcal{Q}, \mathrm{L}^{p}, \ell ^{q}_{u})
$ is an isomorphism of (quasi-) Banach spaces. The weight
$(u_{i})_{i\in I}$ can be chosen as $u_{i}:=v^{(q)}(h_{i})$, where
$(h_{i})_{i\in I}$ is the well-spread family used in the construction of
$\mathcal{Q}$. We call such a weight a $\mathcal{Q}$-discretization
of $v$.
\end{theorem}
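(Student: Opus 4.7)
The plan is to derive the equivalence of norms by passing the wavelet transform to the Fourier side, where its action becomes frequency localization via dual dilation, and then discretizing this frequency-localized picture over the well-spread family $(h_i)_{i\in I}$. This is where the shift $\tfrac{1}{2}-\tfrac{1}{q}$ in the exponent of $|\det(h_i)|$ defining $v^{(q)}$ arises, as the interaction of the Plancherel factor $|\det h|^{1/2}$ in the wavelet transform with the Haar-measure factor $|\det h|^{-1}$ in the coorbit integral.

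First I would fix an analyzing wavelet $\psi\in L^2(\mathbb{R}^d)$ with $\widehat{\psi}\in C_c^\infty(\mathcal{O})$, chosen to satisfy the technical conditions (admissibility, sufficient envelope decay, and the ``better vector'' conditions from Voigtlaender's coorbit framework) so that the coorbit space in \reftext{(\ref{def:Coorbit})} is independent of $\psi$. Via Plancherel one has
\begin{equation*}
W_\psi f(x,h) = |\det h|^{1/2}\,\mathcal{F}^{-1}\bigl(\widehat f\cdot\overline{\widehat\psi(h^T\cdot)}\bigr)(x).
\end{equation*}
For $h\in h_iU$ the multiplier $\overline{\widehat\psi(h^T\cdot)}$ is supported in a set comparable to $Q_i=p_{\xi_0}(h_iU)$, thanks to \reftext{Lemma~\ref{lem:InducedCoveringNormCondition}} applied to $K_1=\operatorname{supp}(\widehat\psi)$ and a compact set $K_2$ covering $U\xi_0$.

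Next I would construct an $L^p$-BAPU $(\varphi_i)_{i\in I}$ subordinate to $\mathcal{Q}$ from $\widehat\psi$, for example by setting $\varphi_i=|\widehat\psi(h_i^T\cdot)|^2/\Sigma$ with $\Sigma=\sum_j|\widehat\psi(h_j^T\cdot)|^2$ (after possibly replacing $\widehat\psi$ by a positive square root of a suitable bump, so the partition is truly nonnegative and sums to $1$). Local finiteness and positivity of $\Sigma$, and the BAPU bounds on $\|\mathcal{F}^{-1}\varphi_i\|_{L^1}$ (resp.\ $L^p$ for $p<1$) follow from \reftext{Lemma~\ref{lem:InducedCoveringNormCondition}}, the structured-covering norm control \reftext{(\ref{eqn:str_cov_norm})}, and dilation estimates. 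Using this BAPU one shows
\begin{equation*}
\|\mathcal{F}^{-1}(\varphi_i\widehat f)\|_{L^p}\ \asymp\ |\det h_i|^{-1/2}\,\|W_\psi f(\cdot,h_i)\|_{L^p},
\end{equation*}
by inserting an auxiliary cut-off $\widetilde\varphi_i$ with $\widetilde\varphi_i\overline{\widehat\psi(h_i^T\cdot)}=\varphi_i$ and invoking the convolution/BAPU estimate \cite[Cor.~3.4.11]{Voigtlaender2015PHD}.

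To pass from the discrete scales $h_i$ to the continuous integral over $H$, I would use the Wiener amalgam formulation in \reftext{(\ref{def:Coorbit})} together with the standard observation that, for a relatively compact unit neighborhood $U$,
\begin{equation*}
\|W_\psi f(\cdot,h)\|_{L^p}\cdot v(h)\ \asymp\ \|W_\psi f(\cdot,h_i)\|_{L^p}\cdot v(h_i)\qquad(h\in h_iU),
\end{equation*}
using moderateness of $v$ and the quasi-covariance of the wavelet transform under right translation in $H$ (which replaces $\widehat\psi$ by a slightly modified analyzing vector, absorbed by the BAPU estimate). Discretizing $\int_H\,\cdot\,|\det h|^{-1}\,dh$ against $(h_i)$ then contributes a factor $|U|\cdot|\det h_i|^{-1}$ per cell, so that the inner $q$-th power becomes
\begin{equation*}
\bigl(|\det h_i|^{1/2}\,v(h_i)\,\|W_\psi f(\cdot,h_i)\|_{L^p}\bigr)^q\,|\det h_i|^{-1}\ =\ \bigl(v^{(q)}(h_i)\,\|W_\psi f(\cdot,h_i)\|_{L^p}\bigr)^q,
\end{equation*}
matching the decomposition norm with weight $u_i=v^{(q)}(h_i)$ after the identification from the previous paragraph.

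The principal obstacle is the rigorous discretization in the quasi-Banach regime $\min(p,q)<1$, where $L^{p,q}_v$ must be replaced by its Wiener amalgam $W(L^{p,q}_v)$ and simple pointwise estimates do not suffice. Here one needs the $L^p$-BAPU property for $p<1$ (with the $|\det T_i|^{1/p-1}$ correction), the oscillation/Wiener-amalgam comparison between continuous and sampled norms from \cite{Rauhut2007CoorbSpacTheoForQuasiBanSpa,Voigtlaender2015PHD}, and careful bookkeeping so that the required ``better vector'' conditions on $\psi$ remain compatible with $\widehat\psi\in C_c^\infty(\mathcal{O})$ and with the admissibility assumptions on $H$; the remaining steps are essentially convolution estimates and changes of variables on $H$.
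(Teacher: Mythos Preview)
The paper does not prove this theorem; it is quoted from \cite[Theorem 4.6.3]{Voigtlaender2015PHD} (with the Banach range already in \cite{FuehrVoigtlWavCooSpaViewAsDecSpa}), so there is no in-paper argument to compare against. Your sketch follows the same strategy as those references: pass the wavelet transform to the Fourier side, observe that $W_\psi f(\cdot,h)$ is a frequency-localized piece of $\widehat f$ near $h^{-T}\operatorname{supp}\widehat\psi$, build a BAPU adapted to the induced covering, and discretize over the well-spread family to match the $\ell^q_u$-norm; the bookkeeping producing $u_i=v^{(q)}(h_i)=|\det h_i|^{1/2-1/q}v(h_i)$ is exactly as you indicate.

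Two points where your outline is looser than the actual argument and would need real work: (i) the specific BAPU you propose, $\varphi_i=|\widehat\psi(h_i^T\cdot)|^2/\Sigma$, is not what is used, and verifying the $L^p$-BAPU bound for $p<1$ (with the $|\det T_i|^{1/p-1}$ correction) from this formula is nontrivial; in the references one instead works with a fixed bump and transports it by the $h_i^{-T}$, using \reftext{(\ref{eqn:str_cov_norm})} directly. (ii) The step ``$\|W_\psi f(\cdot,h)\|_{L^p}\asymp\|W_\psi f(\cdot,h_i)\|_{L^p}$ for $h\in h_iU$'' is not a simple quasi-covariance estimate in the quasi-Banach range; the rigorous version replaces pointwise sampling by the Wiener amalgam maximal function and uses the oscillation estimates from \cite{Rauhut2007CoorbSpacTheoForQuasiBanSpa,Voigtlaender2015PHD}, which is precisely why the coorbit space is defined via $W(L^{p,q}_v)$ in \reftext{(\ref{def:Coorbit})}. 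You flag this as the main obstacle, which is correct; just be aware that the resolution is not a minor patch but the substance of Voigtlaender's Chapter~4.
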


\begin{remark}
\label{rem:ind_weight_intr}
In the following, we will mostly concentrate on constant weights, i.e.
on the study of coorbit spaces of the type $Co(L^{p,q}(G))$ corresponding
to $v\equiv 1$. This has the important consequence that the
$\mathcal{Q}$-discretization $(u_{i})_{i \in I}$ obtained from a dual covering
$\mathcal{Q} = (Q_{i})_{i \in I} = (h_{i}^{-T} Q)_{i \in I}$ fulfills
\begin{equation*}
u_{i} = |{\mathrm{det}}(h_{i})|^{\frac{1}{2}-\frac{1}{q}} = |Q|^{
\frac{1}{2}-\frac{1}{q}} |Q_{i}|^{\frac{1}{q}-\frac{1}{2}} \asymp |Q_{i}|^{
\frac{1}{q}-\frac{1}{2}} ~.
\end{equation*}
In other words, the induced weight $(u_{i})_{i \in I}$ is intrinsic with
exponent $\alpha =\frac{1}{q}-\frac{1}{2}$.
\end{remark}

\begin{remark}
\label{rem:coorbit_decspace_four}
Since the domain of the Fourier transform in the previous theorem consists
of spaces that depend on the choice of dilation groups, some clarification
is in order, see
\cite[Corollary 10]{FuehrVoigtlWavCooSpaViewAsDecSpa} for more details:
Given $f \in (\mathcal{H}^{1}_{w})^{\neg}$, we let
$\mathcal{F}(f) \in \mathcal{D}'(\mathcal{O})$ be defined as the map
$g \mapsto f (\mathcal{F}^{-1} \overline{g})$. Here $\mathcal{F}$ denotes
the Fourier transform defined on the Schwartz functions, and the definition
makes use of
$\mathcal{D}(\mathcal{O}) \subset \mathcal{S}(\mathcal{O})$, as well as
$\mathcal{F}^{-1}(\mathcal{D}(\mathcal{O})) \subset \mathcal{H}^{1}_{w}$ \cite[Theorem 2.1]{FuehCooSpaAndWavCoeDecOveGenDilGro}. 

By the Fischer-Riesz theorem, the inclusion
$\mathcal{H}^{1}_{w} \subset L^{2}(\mathbb{R}^{d})$ provides a canonical
embedding
$L^{2}(\mathbb{R}^{d}) \subset (\mathcal{H}^{1}_{w})^{\neg}$. Hence for
elements of $L^{2}(\mathbb{R}^{d})$, one can rewrite the formula for
$\mathcal{F}(f)$ as
\begin{equation*}
\mathcal{F}(f) (g) = \langle \widehat{f}, \overline{g} \rangle _{L^{2}(
\mathbb{R}^{d})}~,
\end{equation*}
where $\widehat{f}$ denotes the Plancherel transform of $f$. In particular,
for all coorbit spaces $Co(L^{p}(\mathbb{R}^{d} \rtimes H))$ having a canonical embedding
into $ L^{2}(\mathbb{R}^{d})$ according to \reftext{Remark~\ref{rem:coorbit_reservoir}}, the Fourier transform in the theorem coincides
with the Plancherel transform, combined with the obvious identification
of the $L^{2}$-function $\widehat{f}$ with the distribution on
$\mathcal{D}(\mathcal{O})$ obtained by integration against
$\widehat{f}$. In this way, the Fourier transform $\mathcal{F}$ on
$Co(L^{p}(\mathbb{R}^{d} \rtimes H))$, as defined in \reftext{Theorem~\ref{thm:FourierIsoCoorbitDecSpaces}}, also becomes independent of the choice
of dilation group, whenever we canonically identify the coorbit space with a subspace
of $L^{2}(\mathbb{R}^{d})$.
\end{remark}

We next formalize the property that two admissible dilation groups have
the same coorbit spaces. We already pointed out that a literal interpretation
of this property is not generally available, at least not for all possible
choices of coorbit space norms.

\begin{definition}
\label{defn:coorbit_equivalent}
Let $H_{1}, H_{2} \le GL(\mathbb{R}^{d})$ denote admissible matrix groups.
We call $H_{1},H_{2}$ \textit{coorbit equivalent} if for all
$0 < p,q \le \infty $ and for all $f \in L^{2}(\mathbb{R}^{d})$ we have
\begin{equation*}
\| f \|_{ Co(L^{p,q}(\mathbb{R}^{d} \rtimes H_{1}))} \asymp \| f \|_{Co(L^{p,q}(
\mathbb{R}^{d} \rtimes H_{2}))}~.
\end{equation*}
Here the norm equivalence is understood in the generalized sense that one
side is infinite iff the other side is.
\end{definition}

\begin{remark}
\label{rem2.17}
An example of distinct groups that are coorbit equivalent can be found
in \cite{FuehrVoigtlWavCooSpaViewAsDecSpa}, Section 9: If
$H = \mathbb{R}^{+} \times SO(d)$, for $d>1$, and
$C \in GL(\mathbb{R}^{d})$ is arbitrary, then $H$ and $C^{-1} H C$ are
coorbit equivalent, but typically distinct.
\end{remark}

Now the decomposition space characterization from \reftext{Theorem~\ref{thm:FourierIsoCoorbitDecSpaces}}, together with the results from the
previous subsection characterizing when admissible coverings lead to identical
scales of decomposition spaces, provides a rather stringent characterization
of coorbit equivalence.

\begin{theorem}
\label{thm:coorbit_equiv_dual_orbits}
Let $H_{1}, H_{2} \le GL(\mathbb{R}^{d})$ denote admissible matrix groups,
and let $\mathcal{O}_{1},\mathcal{O}_{2}$ denote the associated open dual
orbits. Then the following are equivalent:
\begin{enumerate}
\item[(a)] $H_{1}$ and $H_{2}$ are coorbit equivalent.
\item[(b)] For all $1 \le p,q \le 2$:
$Co(L^{p,q}(\mathbb{R}^{d} \rtimes H_{1})) = Co(L^{p,q}(\mathbb{R}^{d}
\rtimes H_{2}))$, as subspaces of $L^{2}(\mathbb{R}^{d})$.
\item[(c)] There exists $1 \le p,q \le 2$ with $(p,q) \neq (2,2)$, such
that
$Co(L^{p,q}(\mathbb{R}^{d} \rtimes H_{1})) = Co(L^{p,q}(\mathbb{R}^{d}
\rtimes H_{2}))$, as subspaces of $L^{2}(\mathbb{R}^{d})$.
\item[(d)] $\mathcal{O}_{1} = \mathcal{O}_{2}$, and the coverings induced
by $H_{1}$ and $H_{2}$ on the common open orbit are weakly equivalent.
\end{enumerate}
\end{theorem}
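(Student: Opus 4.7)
The plan is to establish the chain $(a) \Rightarrow (b) \Rightarrow (c) \Rightarrow (d) \Rightarrow (a)$, exploiting Theorem~\ref{thm:FourierIsoCoorbitDecSpaces} to translate coorbit statements into decomposition-space statements on the dual orbits, and Theorem~\ref{thm:different_freq_supp} as the key tool to force the orbits to coincide.

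For $(a) \Rightarrow (b)$, I would fix $1 \le p,q \le 2$ and invoke Remark~\ref{rem:coorbit_reservoir}, which canonically identifies each $Co(L^{p,q}(\mathbb{R}^d \rtimes H_i))$ with the subspace $\{ f \in L^2(\mathbb{R}^d) : W_\psi^{H_i} f \in L^{p,q} \}$ of $L^2(\mathbb{R}^d)$, equipped with the norm $\| W_\psi^{H_i} f \|_{L^{p,q}}$. The norm equivalence from (a), restricted to $f \in L^2$, then immediately yields equality of these two subspaces of $L^2$ with equivalent norms. The step $(b) \Rightarrow (c)$ is trivial: any $(p,q) \neq (2,2)$ in the range works.

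The implication $(c) \Rightarrow (d)$ is the main obstacle. I would first combine Theorem~\ref{thm:FourierIsoCoorbitDecSpaces} with the coorbit-to-$L^2$ identifications from Remarks~\ref{rem:coorbit_reservoir} and~\ref{rem:coorbit_decspace_four} to rewrite the coorbit equality at the chosen $(p,q) \neq (2,2)$ as a norm equivalence
\begin{equation*}
\| \widehat{f} \|_{\mathcal{D}(\mathcal{Q}_1, L^p, \ell^q_{u_1})} \asymp \| \widehat{f} \|_{\mathcal{D}(\mathcal{Q}_2, L^p, \ell^q_{u_2})}
\end{equation*}
for $f \in L^2(\mathbb{R}^d)$, where $\mathcal{Q}_i$ is an induced covering of $\mathcal{O}_i$ and $u_i$ is the intrinsic weight with exponent $\tfrac{1}{q}-\tfrac{1}{2}$ by Remark~\ref{rem:ind_weight_intr}. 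Testing this on $f = \mathcal{F}^{-1}(g)$ for arbitrary $g \in C_c^\infty(\mathcal{O}_1 \cap \mathcal{O}_2)$ yields norm equivalence on such $g$ between $\mathcal{D}(\mathcal{Q}_1, L^p, \ell^q_{u_1})$ and $\mathcal{D}(\mathcal{Q}_2, L^p, \ell^q_{u_2})$. If $\mathcal{O}_1 \neq \mathcal{O}_2$, then because both orbits are open and of full Lebesgue measure, hence open and dense, the remark following Theorem~\ref{thm:different_freq_supp} ensures that its hypotheses are met, and its conclusion would force $(p,q) = (2,2)$, contradicting the choice in (c). Hence $\mathcal{O}_1 = \mathcal{O}_2 =: \mathcal{O}$, and the remaining norm equivalence on the common open set places us in the setting of the contrapositive of Lemma~\ref{cor:WeakEquivalenceOfInducedCoverings}, yielding that $\mathcal{Q}_1$ and $\mathcal{Q}_2$ are weakly equivalent. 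The main technical subtlety here is the careful handling of the two distinct $H_i$-dependent reservoir spaces, which Remarks~\ref{rem:coorbit_reservoir} and~\ref{rem:coorbit_decspace_four} resolve.

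For the reverse implication $(d) \Rightarrow (a)$, I would choose, via Lemma~\ref{cor:ExistenceInducedConnectedCovering}, induced coverings $\mathcal{Q}_1, \mathcal{Q}_2$ of the common orbit $\mathcal{O}$ consisting of connected open sets. Weak equivalence combined with Lemma~\ref{lem:intrinsic_weight_equiv} renders the intrinsic weights equivalent on intersecting pairs, so Lemma~\ref{lem:weak_equiv_suff}, in its full range $0 \le p,q \le \infty$, yields equality $\mathcal{D}(\mathcal{Q}_1, L^p, \ell^q_{u_1}) = \mathcal{D}(\mathcal{Q}_2, L^p, \ell^q_{u_2})$ with equivalent norms for all exponents. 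Translating back via Theorem~\ref{thm:FourierIsoCoorbitDecSpaces} produces the coorbit norm equivalence on all of $L^2(\mathbb{R}^d)$ required in Definition~\ref{defn:coorbit_equivalent} for every $0 < p,q \le \infty$, completing the cycle.
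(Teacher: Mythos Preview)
Your proof is correct and follows essentially the same route as the paper's own argument: the same cycle $(a)\Rightarrow(b)\Rightarrow(c)\Rightarrow(d)\Rightarrow(a)$, with Theorem~\ref{thm:different_freq_supp} forcing $\mathcal{O}_1=\mathcal{O}_2$, Lemma~\ref{cor:WeakEquivalenceOfInducedCoverings} yielding weak equivalence, and Lemma~\ref{lem:weak_equiv_suff} together with Lemma~\ref{lem:intrinsic_weight_equiv} closing the loop. The only point worth tightening is in $(c)\Rightarrow(d)$: once $\mathcal{O}_1=\mathcal{O}_2$, the contrapositive of Lemma~\ref{cor:WeakEquivalenceOfInducedCoverings} requires equality of the full decomposition spaces, not merely norm equivalence on $C_c^\infty(\mathcal{O})$; the paper makes this explicit by noting that the common Plancherel transform (via Remark~\ref{rem:coorbit_decspace_four}) carries the coinciding coorbit spaces onto coinciding decomposition spaces, which you have the ingredients for but should state more clearly.
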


\begin{proof}
The implication $(a) \Rightarrow (b)$ is clear by the nature of the canonical
embedding of the coorbit spaces into $L^{2}(\mathbb{R}^{d})$ detailed in
\reftext{Remark~\ref{rem:coorbit_reservoir}}. $(b) \Rightarrow (c)$ is obvious.

For the direction $(c) \Rightarrow (d)$, recall that
$\mathcal{O}_{1},\mathcal{O}_{2}$ are both open and dense. Hence, following
the observations after \reftext{Theorem~\ref{thm:different_freq_supp}}, if these
sets are distinct, then either
$\mathcal{O}_{1} \cap \partial \mathcal{O}_{2} \neq \emptyset $ or
$\mathcal{O}_{2} \cap \partial \mathcal{O}_{1} \neq \emptyset $ follows,
and in addition $\mathcal{O}_{1} \cap \mathcal{O}_{2}$ is unbounded. Moreover,
the space
$\mathcal{F}^{-1}(C_{c}^{\infty}(\mathcal{O}_{1} \cap \mathcal{O}_{2}))$
is a subspace of $L^{2}(\mathbb{R}^{d})$, and the assumption on the coorbit
spaces combined with \reftext{Theorem~\ref{thm:FourierIsoCoorbitDecSpaces}}, yields
that the decomposition space norms
$\| \cdot \|_{\mathcal{D}(\mathcal{Q}, \mathrm{L}^{p}, \ell ^{q}_{v})}$
and
$\| \cdot \|_{\mathcal{D}(\mathcal{P}, \mathrm{L}^{p}, \ell ^{q}_{u})}$
are equivalent on
$C_{c}^{\infty}(\mathcal{O}_{1} \cap \mathcal{O}_{2})$, whenever
$\mathcal{Q}$ and $\mathcal{P}$ are coverings induced by
$H_{1}, H_{2}$, respectively. Here $u,v$ are the intrinsic weights of exponent
$\alpha =\frac{1}{q}-\frac{1}{2}$ associated to the coverings, see \reftext{Remark~\ref{rem:ind_weight_intr}}. Now the fact that $(p,q) \neq (2,2)$ implies
via \reftext{Theorem~\ref{thm:different_freq_supp}} that
$\mathcal{O}_{1} = \mathcal{O}_{2}$.

We observed in \reftext{Remark~\ref{rem:coorbit_decspace_four}} that the
$L^{2}$-Fourier transform $\mathcal{F}$ induces isomorphisms
\begin{equation*}
Co(L^{p,q}(\mathbb{R}^{d} \rtimes H_{1})) \to \mathcal{D}(\mathcal{P},
L^{p}, \ell ^{q}_{u}) ~,~ Co(L^{p,q}(\mathbb{R}^{d} \rtimes H_{2}))
\to \mathcal{D}(\mathcal{P}, L^{p}, \ell ^{q}_{v})
\end{equation*}
and by assumption, the coorbit spaces coincide. But then
\begin{equation*}
\mathcal{D}(\mathcal{Q}, L^{p}, \ell ^{q}_{u}) = \mathcal{D}(
\mathcal{P}, L^{p}, \ell ^{q}_{v}) ~.
\end{equation*}
Now \reftext{Lemma~\ref{cor:WeakEquivalenceOfInducedCoverings}} implies weak equivalence
of the coverings.

Finally, $(d) \Rightarrow (a)$ follows combining \reftext{Theorem~\ref{thm:FourierIsoCoorbitDecSpaces}} and \reftext{Lemma~\ref{lem:weak_equiv_suff}}. Observe also that the induced weights are equivalent
by \reftext{Remark~\ref{rem:ind_weight_intr}} and \reftext{Lemma~\ref{lem:intrinsic_weight_equiv}}.
\end{proof}

\begin{remark}
\label{rem2.19}
The unweighted case, which is in the focus of the theorem, covers all spaces
of the type $L^{p}(\mathbb{R}^{d} \rtimes H)$, for $1\le p <2$, which are
of special importance for nonlinear approximation. Considering general,
non-constant weights on the dilation groups requires solving a technical
issue, that is currently not well-understood. In this general setting,
the question becomes whether any choice of weight $v_{1}$ given on one
group $H_{1}$ can be matched by an appropriate weight $v_{2}$ on
$H_{2}$, such that the resulting weights on the induced coverings are equivalent.
In principle it seems possible, using cross-sections, to construct
$v_{2}$ from $v_{1}$. However, it is currently unclear whether, starting
from a weight $v_{1}$ that is moderate with respect to a suitable sub-multiplicative
function on $H_{1}$, the same will automatically hold true for the newly
constructed weight $v_{2}$ on $H_{2}$.
\end{remark}

\section{A metric characterization of weak equivalence}
\label{sect:metric_reformulation}

This section contains the central observation of this paper, namely that
the question of weak equivalence of coverings, and thus of equality of
the scales of decomposition spaces associated to the coverings, can be
reformulated using the language of coarse geometry.

\subsection{Elementary notions and results from coarse geometry}
\label{subsect:elem_coarse}

In this section we collect the basic results from coarse geometry that
are needed for the following. Since the terminology employed in coarse
geometry is not entirely unified, and for reasons of self-containedness,
we include proofs of some basic results for which we could not find a handy
reference.

\begin{remark}%
\label{rem:OnlyCoarseStructureOfMetric}
In the following, we define different metrics on subsets of
$\mathbb{R}^{d}$ and on matrix groups, but we will only be concerned with
the coarse properties of these metrics. In particular, the terms
\textit{open}, \textit{closed}, \textit{connected component} and
\textit{the continuity of maps} between these spaces have to be understood
with respect to the standard topology on the respective sets and not with
respect to the topology that may be induced by these different metrics.
\end{remark}

The pivotal role continuous maps play in the study of topological spaces
is taken over by coarse maps in the study of large scale properties of
metric spaces.

\begin{definition}[cf. \cite{Roe2003Lectures} Definition 1.8]
\label{defn3.2}
Let $(X, d_{X})$ and $(Y, d_{Y})$ be metric spaces and let
$f:X\to Y$ be a map.
\begin{itemize}
\item We call $f$ \textit{metrically proper} if the inverse image under
$f$ of every bounded subset in $Y$ is bounded in $X$, that is
\begin{align*}
&\sup \left \{d_{Y}(y,y') :\ y,y'\in A \right \}< \infty
\\
& \quad \Rightarrow \sup \left \{d_{X}(x,x') :\ x,x'\in f^{{-1}}(A)
\right \}< \infty
\end{align*}
for all sets $A\subset Y$.
\item We call $f$ \textit{uniformly bornologous} if for every $R>0$ there
exists $S>0$ such that
\begin{align*}
d_{X}(x,x')<R\quad \Longrightarrow \quad d_{Y}\left (f(x),f(x')
\right )<S
\end{align*}
for all $x,x'$ in $X$.
\item A \textit{coarse map} from $(X, d_{X})$ to $(Y, d_{Y})$ is a metrically
proper and uniformly bornologous map.
\end{itemize}
\end{definition}

A simple argument shows that the composition of coarse maps is coarse as
well. In order to define when we want to consider coarse structures
on different spaces as essentially the same, we introduce the notion of closeness
of maps.

\begin{definition}[cf. \cite{NowakYu2012} Definition 1.3.2]
\label{defn3.3}
Let $(X, d_{X})$ and $(Y, d_{Y})$ be metric spaces and $f:X\to Y$ and
$g:X \to Y$ be maps. We say $f$ and $g$ are \emph{close} if
\begin{align*}
\sup \left \{d_{Y}\left (f(x), g(x)\right ):\ x\in X \right \} <
\infty .
\end{align*}
\end{definition}

\begin{definition}[cf. \cite{Roe2003Lectures} Page 6]
\label{defn3.4}
Two metric spaces $(X, d_{X})$ and $(Y, d_{Y})$ are
\emph{coarsely equivalent} if there exist two coarse maps
\begin{equation*}
f:X\to Y\quad \text{and}\quad g:Y\to X%
\end{equation*}
such that $f \circ g$ is close to $\mathrm{id}_{Y}$ and $g \circ f$ is
close to $\mathrm{id}_{X}$. $f$ is then called a
\emph{metric coarse equivalence}.
\end{definition}

As the name suggests, this relation is in fact an equivalence relation.
An important class of coarse maps are quasi-isometries.

\begin{definition}[cf. \cite{Roe2003Lectures} Remark 1.9]
\label{defn3.5}
Let $(X, d_{X})$ and $(Y, d_{Y})$ be metric spaces and $f:X\to Y$ be a
map. We call $f$ \textit{large-scale Lipschitz} if there are $L,C>0$ such
that
$d_{Y}\left (f(x), f(x')\right ) \leq Ld_{X}\left (x,x'\right ) + C$ for
all $x,x'\in X$.
\end{definition}

\begin{definition}[cf. \cite{NowakYu2012} Definition 1.3.4]%
\label{def:quasi-isometry}
Let $(X, d_{X})$ and $(Y, d_{Y})$ be metric spaces. A map $f:X\to Y$ is
a \textit{quasi-isometry} if the following conditions are satisfied:
\begin{enumerate}[ii)]
\item[i)] The map $f$ is a \textit{quasi-isometric embedding}. This means there
exist constants $L,C>0$ such that
\begin{align*}
L^{-1}d_{X}(x,x')-C \leq d_{Y}\left (f(x),f(x')\right ) \leq L d_{X}(x,x')
+ C
\end{align*}
for all $x,x'\in X$.
\item[ii)] The map $f$ is \textit{coarsely surjective}. This means that there
exists $K>0$ such that for every $y\in Y$ exists an $x\in X$ with
$d_{Y}\left (f(x), y\right )\leq K$.
\end{enumerate}
\end{definition}

\begin{remark}
\label{rem3.7}
It is easy to see that a surjective map is coarsely surjective. In the
concrete cases we consider, this will be the standard justification for
this property.
\end{remark}

As for coarse maps, it is easy to see that the composition of quasi-isometric
maps is again quasi-isometric. The following lemma has a straightforward
proof.

\begin{lemma}%
\label{lem:QuasiIsoCaorse}
Every quasi-isometric embedding is coarse.
\end{lemma}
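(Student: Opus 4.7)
The plan is to unpack both conditions in the definition of a coarse map (metrically proper and uniformly bornologous) directly from the two-sided inequality
\begin{equation*}
L^{-1} d_X(x,x') - C \le d_Y(f(x),f(x')) \le L\, d_X(x,x') + C
\end{equation*}
defining a quasi-isometric embedding. The coarse surjectivity clause in Definition~\ref{def:quasi-isometry} will not be needed, which is consistent with the statement being about quasi-isometric \emph{embeddings}. The argument is entirely elementary; I expect no real obstacle, since both required properties are essentially a rewriting of one of the two halves of the defining inequality.

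First I would verify that $f$ is uniformly bornologous. Given $R > 0$ and $x, x' \in X$ with $d_X(x,x') < R$, the upper bound in the quasi-isometric embedding inequality immediately yields $d_Y(f(x), f(x')) < LR + C$, so one may simply take $S := LR + C$.

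Next I would verify metric properness. Let $A \subset Y$ be bounded with diameter $M := \sup\{d_Y(y,y') : y, y' \in A\} < \infty$. For any $x, x' \in f^{-1}(A)$ the lower bound gives
\begin{equation*}
L^{-1} d_X(x,x') - C \le d_Y(f(x),f(x')) \le M,
\end{equation*}
so $d_X(x,x') \le L(M + C)$, showing that $f^{-1}(A)$ has finite diameter. Combining both verifications yields that $f$ is a coarse map in the sense of Definition~\ref{defn3.2}, completing the proof.
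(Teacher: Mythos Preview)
Your proof is correct and is exactly the straightforward verification the paper has in mind; in fact the paper omits the proof entirely, merely remarking that it is straightforward. There is nothing to add.
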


\begin{definition}%
\label{def:CoarseRightInverse}
Let $(X, d_{X})$ and $(Y, d_{Y})$ be metric spaces and $f:X\to Y$. We call
a map $h:Y\to X$ such that $f\circ h$ is close to $\mathrm{id}_{Y}$ a
\textit{coarse right inverse of $f$}.
\end{definition}

\begin{example}
\label{exmp3.10}
The map
$f:(\mathbb{R}, |\cdot|)\to (\mathbb{R}^{2}, |\cdot|_{1}),\ x\mapsto (x,0)^{T}$ illustrates
that not every coarse map has a coarse right inverse.
\end{example}

Quasi-isometries always have coarse right inverses. We include a proof,
for the reasons mentioned above.

\begin{lemma}%
\label{lem:QuasiIsoHasCoaInv}
Every quasi-isometry has a coarse right inverse. Furthermore, any coarse
right inverse of a quasi-isometry is a quasi-isometry.
\end{lemma}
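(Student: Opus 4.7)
The plan is to first construct a coarse right inverse $h$ by pointwise selection, and then verify the quasi-isometry properties using only the two defining estimates of a quasi-isometry and the triangle inequality. Since both assertions have essentially the same proof once $h$ has been chosen, I will carry out the verification in a way that only uses the closeness of $f \circ h$ to $\mathrm{id}_Y$, so the same argument covers both statements.

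\textbf{Step 1 (Construction of $h$).} Let $f: X \to Y$ be a quasi-isometry, with constants $L,C$ for the embedding property and $K$ for coarse surjectivity as in \reftext{Definition~\ref{def:quasi-isometry}}. Using the axiom of choice, for each $y \in Y$ pick $h(y) \in X$ such that $d_Y(f(h(y)), y) \le K$. Then $f \circ h$ is close to $\mathrm{id}_Y$ by construction, so $h$ is a coarse right inverse.

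\textbf{Step 2 (Quasi-isometric embedding).} Let now $h : Y \to X$ be any coarse right inverse of $f$, and denote by $K'$ a constant with $d_Y(f(h(y)), y) \le K'$ for all $y \in Y$. For $y,y' \in Y$, I first bound $d_X(h(y),h(y'))$ from above via the left inequality of the embedding property of $f$,
\begin{equation*}
L^{-1} d_X(h(y),h(y')) - C \le d_Y(f(h(y)), f(h(y'))) \le d_Y(y,y') + 2K',
\end{equation*}
which gives $d_X(h(y),h(y')) \le L \, d_Y(y,y') + L(2K' + C)$. Conversely, using the right inequality of the embedding property,
\begin{equation*}
d_Y(y,y') \le 2K' + d_Y(f(h(y)), f(h(y'))) \le 2K' + L \, d_X(h(y),h(y')) + C,
\end{equation*}
so $L^{-1} d_Y(y,y') - L^{-1}(2K' + C) \le d_X(h(y),h(y'))$. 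Combining these yields the quasi-isometric embedding estimate for $h$ with constants $L$ and $L(2K' + C)$.

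\textbf{Step 3 (Coarse surjectivity of $h$).} For any $x \in X$, apply closeness at $y = f(x)$: $d_Y(f(h(f(x))), f(x)) \le K'$. Then the left inequality of the embedding property of $f$ gives
\begin{equation*}
L^{-1} d_X(h(f(x)), x) - C \le d_Y(f(h(f(x))), f(x)) \le K',
\end{equation*}
whence $d_X(h(f(x)), x) \le L(K' + C)$. Thus $h$ is coarsely surjective, completing the proof that $h$ is a quasi-isometry.

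There is no genuine obstacle here; the only mild subtlety is bookkeeping of constants and the observation that Step 2 and Step 3 use only the closeness of $f \circ h$ to $\mathrm{id}_Y$, not the explicit construction from Step 1. This is what makes the second assertion of the lemma fall out of the same argument: any coarse right inverse of a quasi-isometry automatically satisfies the hypotheses used in Steps 2 and 3, hence is itself a quasi-isometry.
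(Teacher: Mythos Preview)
Your proof is correct and follows essentially the same approach as the paper: construct $h$ via the axiom of choice using coarse surjectivity, then use the quasi-isometric embedding inequalities for $f$ together with the triangle inequality and the closeness bound $d_Y(f(h(y)),y)\le K'$ to derive both the embedding estimates and coarse surjectivity of $h$. The paper's argument is organized identically, only with slightly more expanded chains of inequalities.
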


\begin{proof}
Let $(X, d_{X})$ and $(Y, d_{Y})$ be metric spaces and $f:X\to Y$ be a
quasi-isometry. This entails the existence of some $K>0$ with the property
that for every $y\in Y$ exists an $x\in X$ with
$d_{Y}\left (f(x), y\right )\leq K$. Consequently, for every
$y\in Y$, the set
\begin{equation*}
A_{y}:=\left \{x\in X:\ d_{Y}\left (f(x), y\right ) \leq K \right \}%
\end{equation*}
is nonempty. The axiom of choice implies the existence of at least one
coarse right inverse of $f$ because $\prod _{y\in Y}A_{y}$ is nonempty.

Since $f$ is a quasi-isometry, there exist $L,C>0$ with
\begin{align*}
L^{-1}d_{X}(x,x')-C \leq d_{Y}\left (f(x),f(x')\right ) \leq L d_{X}(x,x')
+ C
\end{align*}
for all $x,x'\in X$. Moreover, let $h:Y\to X$ be any coarse right inverse
of $f$ such that
$\sup _{y\in Y}d_{Y}\left ((f\circ h)(y),y)\right ) \leq M$ for some
$M>0$. By application of the triangle inequality twice, it follows that
\begin{align*}
L^{{-1}}d_{X}\left (h(y), h(y')\right ) - C &\leq d_{Y}\left ((f
\circ h)(y),(f\circ h)(y')\right )
\\
&\leq d_{Y}\left ((f\circ h)(y), y\right ) + d_{Y}\left (y', y\right )
\\
& \qquad + d_{Y}\left (y', (f\circ h)(y')\right )
\\
& \leq 2M + d_{Y}\left (y', y\right )
\end{align*}
for all $y, y' \in Y$. This inequality implies
\begin{align*}
d_{X}\left (h(y), h(y')\right ) \leq Ld_{Y}\left (y', y\right ) + 2LM +
CL.
\end{align*}
We continue by using the reverse triangle inequality twice to get
\begin{align*}
Ld_{X}\left (h(y), h(y')\right ) + C &\geq d_{Y}\left ((f\circ h)(y),(f
\circ h)(y')\right )
\\
&\geq d_{Y}\left (y,(f\circ h)(y')\right ) - d_{Y}\left ((f\circ h)(y),
y\right )
\\
&\geq d_{Y}\left (y,y'\right ) - d_{Y}\left ((f\circ h)(y'), y'
\right )
\\
&\qquad - d_{Y}\left ((f\circ h)(y), y\right )
\\
&\geq d_{Y}\left (y,y'\right ) - 2M.
\end{align*}
This inequality implies
\begin{align*}
d_{X}\left (h(y), h(y')\right ) \geq L^{{-1}}d_{Y}\left (y', y\right )
- 2L^{{-1}}M - CL^{{-1}}
\end{align*}
for all $y,y' \in Y$. It follows that $h$ is a quasi-isometric embedding.

In order to show that $h$ is a quasi-isometry, let $x\in X$ be arbitrary
and choose $y:=f(x)$. We conclude
\begin{align*}
d_{X}\left (h (f(x)),x \right ) &\leq Ld_{Y}\left (f\circ (h\circ f)(x),f(x)
\right ) + CL
\\
& = Ld_{Y}\left ((f\circ h)(f(x)),f(x) \right ) + CL
\\
& = Ld_{Y}\left ((f\circ h)(y),y \right ) + CL
\\
& \leq LM + CL
\end{align*}
and the inequality
%
\begin{align}
\label{eq:CoarseRightInverse}
d_{X}\left (h (f(x)),x \right ) \leq LM+CL
\end{align}
shows that $h$ is coarsely surjective. In summary, this shows that $h$ is a quasi-isometry.
\end{proof}

\begin{remark}%
\label{rem:CoarseRightInverse}
The inequality \reftext{(\ref{eq:CoarseRightInverse})} also shows that if
$f:X\to Y$ is a quasi-isometry and $h:Y\to X$ is any coarse right inverse
of $f$, then $f$ is a coarse right inverse of $h$. If we define a coarse
left inverse analogously, then this shows that every coarse right inverse
of a quasi-isometry is also a coarse left inverse of this quasi-isometry.
\end{remark}

\begin{theorem}%
\label{thm:QuasiIsoCoarseEqiuv}
Let $(X, d_{X})$ and $(Y, d_{Y})$ be metric spaces. If there exists a quasi-isometry
$f:X\to Y$, then $X$ and $Y$ are coarsely equivalent.
\end{theorem}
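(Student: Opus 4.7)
The plan is to assemble the theorem directly from the lemmas and remark already established in this subsection, so that essentially no new computation is required. The result is really a bookkeeping statement: the defining pair of maps witnessing a coarse equivalence can be extracted from a single quasi-isometry together with a coarse right inverse, which in turn exists by \reftext{Lemma~\ref{lem:QuasiIsoHasCoaInv}}.

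First I would invoke \reftext{Lemma~\ref{lem:QuasiIsoHasCoaInv}} to produce, from the given quasi-isometry $f : X \to Y$, a coarse right inverse $h : Y \to X$, and to record that $h$ is itself a quasi-isometry. By \reftext{Lemma~\ref{lem:QuasiIsoCaorse}}, both $f$ and $h$ are then coarse maps, which takes care of the coarseness requirement in \reftext{Definition~\ref{defn3.4}}.

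Next I would verify the two closeness conditions. The identity $f \circ h \sim \mathrm{id}_Y$ is immediate from the definition of a coarse right inverse (\reftext{Definition~\ref{def:CoarseRightInverse}}). For the composition in the other direction, I would appeal to \reftext{Remark~\ref{rem:CoarseRightInverse}}, which observes that any coarse right inverse of a quasi-isometry is also a coarse left inverse, so that $h \circ f$ is close to $\mathrm{id}_X$. This completes the verification of the conditions in \reftext{Definition~\ref{defn3.4}}, and hence shows that $X$ and $Y$ are coarsely equivalent with $f$ as a metric coarse equivalence.

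There is no real obstacle here; the only thing to be careful about is not to reprove what \reftext{Lemma~\ref{lem:QuasiIsoHasCoaInv}} and \reftext{Remark~\ref{rem:CoarseRightInverse}} already give. In particular, the existence of the coarse right inverse $h$ relies on the axiom of choice, as noted in the proof of \reftext{Lemma~\ref{lem:QuasiIsoHasCoaInv}}, but nothing further needs to be said about this. The proof will therefore be essentially a two-sentence argument citing the preceding results.
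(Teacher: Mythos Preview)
Your proposal is correct and follows exactly the same approach as the paper's proof: invoke \reftext{Lemma~\ref{lem:QuasiIsoHasCoaInv}} to obtain a coarse right inverse $h$ that is itself a quasi-isometry, use \reftext{Lemma~\ref{lem:QuasiIsoCaorse}} to see that $f$ and $h$ are coarse, and then get the two closeness conditions from the definition of a coarse right inverse and \reftext{Remark~\ref{rem:CoarseRightInverse}}.
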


\begin{proof}
According to \reftext{Lemma~\ref{lem:QuasiIsoHasCoaInv}}, $f$ has a coarse right
inverse $h$, which is also a quasi-isometry. By \reftext{Lemma~\ref{lem:QuasiIsoCaorse}}, $f$ and $h$ are coarse maps. Since $h$ is a coarse
right inverse of $f$, the map $f\circ h$ is close to
$\mathrm{id}_{Y}$. From \reftext{Remark~\ref{rem:CoarseRightInverse}} follows that
$h\circ f$ is close to $\mathrm{id}_{X}$. This shows that $f$ and
$h$ induce a coarse equivalence of $X$ and $Y$.
\end{proof}

The following definition allows to identify a general setting in which
the terms ``coarse equivalence'' and ``quasi-isometry'' are synonymous.
See \cite[Definition 3.B.1]{Cornulier_2016}.
%
\begin{definition}
\label{defn:ls_geodesic}
Let $(X,d)$ denote a metric space. For $c>0$, $X$ is called $c$-large-scale
geodesic if there exist constants $a>0, b \ge 0$ such that for every pair
of points $x,x'$ there exists a finite sequence
$x_{0} = x,x_{1},\ldots , x_{n} = x'$ satisfying
$d(x_{i-1},x_{i}) \le c$ and $n \le a d(x,x') + b$. $(X,d)$ is called large-scale
geodesic if it is $c$-large-scale geodesic for some $c>0$.
\end{definition}

The following result is \cite[Proposition 3.B.9]{Cornulier_2016}.
%
\begin{proposition}
\label{prop:ce_eq_qi}
Let $(X,d)$ and $(Y,d')$ denote large-scale geodesic spaces, and
$f: X \to Y$. Then $f$ is a coarse equivalence if and only if it is a quasi-isometry.
\end{proposition}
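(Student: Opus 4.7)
The plan is to establish the two implications separately. The $(\Leftarrow)$ direction — every quasi-isometry is a coarse equivalence — is already established in Theorem \ref{thm:QuasiIsoCoarseEqiuv} without any geodesic hypothesis, so I would simply cite it. All of the real work lies in the $(\Rightarrow)$ direction, where I must upgrade a coarse equivalence $f : X \to Y$, with coarse inverse $g : Y \to X$, to a quasi-isometry under the assumption that both $(X,d)$ and $(Y,d')$ are large-scale geodesic.

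The pivotal observation I would exploit is that on a large-scale geodesic space, every uniformly bornologous map is automatically large-scale Lipschitz. Concretely, let $c$, $a$, $b$ be the constants for $X$ from Definition \ref{defn:ls_geodesic}. Since $f$ is uniformly bornologous, there exists $S > 0$ such that $d(u,v) < c+1$ implies $d'(f(u), f(v)) < S$. Given any $x, x' \in X$, I choose a chain $x_0 = x, x_1, \ldots, x_n = x'$ with $d(x_{i-1}, x_i) \le c$ and $n \le a\, d(x, x') + b$; the triangle inequality then yields
\begin{align*}
d'(f(x), f(x')) \;\le\; \sum_{i=1}^n d'(f(x_{i-1}), f(x_i)) \;\le\; nS \;\le\; Sa\, d(x, x') + Sb,
\end{align*}
so $f$ is large-scale Lipschitz. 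Applying the same argument to $g$, using the large-scale geodesic structure of $Y$, yields constants $L', C' > 0$ with $d(g(y), g(y')) \le L' d'(y, y') + C'$ for all $y, y' \in Y$.

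With both $f$ and $g$ large-scale Lipschitz, the lower quasi-isometric bound for $f$ comes from the closeness relations. Let $M, N > 0$ satisfy $d(g(f(x)), x) \le M$ for all $x \in X$ and $d'(f(g(y)), y) \le N$ for all $y \in Y$. Then
\begin{align*}
d(x, x') \;\le\; d(x, g(f(x))) + d(g(f(x)), g(f(x'))) + d(g(f(x')), x') \;\le\; 2M + L'\, d'(f(x), f(x')) + C',
\end{align*}
and rearranging gives $d'(f(x), f(x')) \ge (L')^{-1} d(x, x') - (L')^{-1}(2M + C')$, which combined with the large-scale Lipschitz bound on $f$ shows that $f$ is a quasi-isometric embedding. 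Coarse surjectivity is immediate: for any $y \in Y$, the point $x := g(y)$ satisfies $d'(f(x), y) = d'(f(g(y)), y) \le N$. Hence $f$ is a quasi-isometry.

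The main idea — rather than an obstacle — is the chain argument converting uniform bornologousness into a linear (large-scale Lipschitz) bound; everything else is bookkeeping with the triangle inequality and the closeness assumptions. The large-scale geodesic hypothesis is used exactly at that chain step, and it is essential: without it, coarse equivalences can grow superlinearly and therefore fail to be quasi-isometries.
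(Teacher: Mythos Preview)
Your proof is correct and is the standard argument for this result. Note, however, that the paper does not actually prove this proposition: it simply cites \cite[Proposition 3.B.9]{Cornulier_2016} and moves on. So there is no ``paper's own proof'' to compare against; you have supplied a self-contained argument where the paper defers to the literature, and your chain argument (uniformly bornologous plus large-scale geodesic implies large-scale Lipschitz) is exactly the mechanism underlying the cited result.
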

%

\subsection{Metrics induced by coverings}
\label{sec3.2}

We start with a preparation for the definition of the metric associated
to a covering.

\begin{definition}
\label{defn3.16}
Let $\mathcal{O}\subset \mathbb{R}^{d}$ be open and let
$\mathcal{Q}= (Q_{i})_{i\in I}$ be a covering of $\mathcal{O}$. For
$x,y\in \mathcal{O}$, we say $x$ and $y$ are connected by a
\textit{$\mathcal{Q}$-chain (of length m)} if there exist
$Q_{1}, \ldots , Q_{m} \in \mathcal{Q}$ such that $x\in Q_{1}$,
$y\in Q_{m}$ and $Q_{k}\cap Q_{k+1}\neq \emptyset $ for all
$k\in \{1,\ldots , m-1\}$.
\end{definition}

We next define the metric, which is called
\textit{$\mathcal{Q}$-chain distance} in
\cite[Definition 3.4]{FeiGroeBanSpaRelToIntGrpRepI}.

\begin{definition}
\label{defn3.17}
Let $\mathcal{O}\subset \mathbb{R}^{d}$ be open and let
$\mathcal{Q}= (Q_{i})_{i\in I}$ be a covering of $\mathcal{O}$. Define
the map
$d_{\mathcal{Q}}:\mathcal{O}\times \mathcal{O}\to \mathbb{N}_{0}\cup
\{\infty \}$ by
\begin{align*}
d_{\mathcal{Q}}(x,y)=
\begin{cases}
\inf
\Set{m\in \mathbb{N}|
\begin{array}{l}
x,y \text{ are connected by a}\\
\mathcal{Q}\text{-chain of length } m
\end{array}
}, & x\neq y
\\
0, &x=y,
\end{cases}
\end{align*}
where we set $\inf \emptyset =\infty $.
\end{definition}

The above map in fact defines a metric (we allow the value $\infty $ for
a metric) on $\mathcal{O}$, without any further restrictions on the covering.

\begin{remark}
\label{rem3.18}
Let $\mathcal{O}\subset \mathbb{R}^{d}$ be open and let
$\mathcal{Q}= (Q_{i})_{i\in I}$ be a covering of $\mathcal{O}$. One could
also be tempted to define a function by considering the neighbors of a
set (\textbf{recall \reftext{Definition~\ref{def:SetOfNeighbors}}})
\begin{align*}
\widetilde{d}:&\mathcal{O}\times \mathcal{O}\to \mathbb{N}_{0}\cup \{
\infty \},
\\
&(x,y)\mapsto
\begin{cases}
\inf
\Set{n\in \mathbb{N}| \exists Q\in \mathcal{Q}\text{ with } x,y \in Q^{n \ast}},
& x\neq y
\\
0 & x=y.
\end{cases}
\end{align*}
In general, this map does not satisfy the triangle inequality and is therefore
not a metric on $\mathcal{O}$. It suffices to consider a uniform covering
of $\mathcal{O}=\mathbb{R}$ by intervals of length $2$ as depicted in the
diagram below. Here, the intervals are represented by rectangles. For the
points $x,y,z \in \mathbb{R}$, we have $\widetilde{d}(x,y)=1$,
$\widetilde{d}(y,z)=1$ but $\widetilde{d}(x,z)=3$.

\begin{center}
\begin{tikzpicture}
\draw [thick,->] (0,0) -- (10.5,0);
\foreach \x in {0,1,2,3,4,5,6,7,8,9,10}
    \draw (\x cm,1pt) -- (\x cm,-1pt) node[anchor=north] {$\x$};

    \draw (0,-0.5) rectangle (2,0.5);
    \draw (1.5,-1) rectangle (3.5,1);
    \draw (3,-0.5) rectangle (5, 0.5);
    \draw (4.5,-1) rectangle (6.5,1);
    \draw (6,-0.5) rectangle (8, 0.5);
    \draw (7.5,-1) rectangle (9.5,1);

    \filldraw (1,0) circle (2pt);
    \filldraw (4.75,0) circle (2pt);
    \filldraw (9.25,0) circle (2pt);

    \draw (1,0) -- (1,1.5);
    \draw (4.75,0) -- (4.75,1.5);
    \draw (9.25,0) -- (9.25,1.5);

    \draw (1,1.5) node[above] {$x$};
    \draw (4.75,1.5) node[above] {$y$};
    \draw (9.25,1.5) node[above] {$z$};
\end{tikzpicture}
\end{center}
\end{remark}

\subsection{Characterizing weak equivalence}
\label{sec3.3}

We will only consider coverings consisting of open connected sets. Here
the property $d(x,y) = \infty $ can be related to the partition of
$\mathcal{O}$ into connected components.

\begin{lemma}
\label{lem3.19}
Let $\mathcal{O}\subset \mathbb{R}^{d}$ be open and let
$\mathcal{Q}= (Q_{i})_{i\in I}$ be a covering of $\mathcal{O}$ comprised
of open connected subsets of $\mathcal{O}$. Then
$d_{\mathcal{Q}}(x,y) < \infty $ if and only if $x,y$ lie in the same connected
component of $\mathcal{O}$.
\end{lemma}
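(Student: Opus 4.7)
The plan is to prove the two directions separately, each via a standard connectivity argument that exploits the hypothesis that every set in $\mathcal{Q}$ is open and connected.

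For the forward direction, suppose $d_{\mathcal{Q}}(x,y) < \infty$ and pick a witnessing chain $Q_1,\ldots,Q_m \in \mathcal{Q}$ with $x \in Q_1$, $y \in Q_m$, and $Q_k \cap Q_{k+1}\neq \emptyset$ for $k=1,\ldots,m-1$. I would argue by induction that the union $U_m := \bigcup_{k=1}^m Q_k$ is connected: the base case $m=1$ is immediate, and for the inductive step one uses that $U_k$ and $Q_{k+1}$ are each connected and share the point(s) in $Q_k \cap Q_{k+1}$. Since $U_m \subset \mathcal{O}$ is a connected set containing both $x$ and $y$, the two points must lie in the same connected component of $\mathcal{O}$.

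For the reverse direction, I would fix $x \in \mathcal{O}$ and consider
\begin{equation*}
A_x := \{\, y \in \mathcal{O} : d_{\mathcal{Q}}(x,y) < \infty \,\}.
\end{equation*}
The key step is showing $A_x$ is both open and closed in $\mathcal{O}$. For openness: if $y \in A_x$, choose any $Q_i \in \mathcal{Q}$ with $y \in Q_i$; every point $z \in Q_i$ satisfies $d_{\mathcal{Q}}(y,z) \le 1$ (via the single-set chain $Q_i$), hence $d_{\mathcal{Q}}(x,z) \le d_{\mathcal{Q}}(x,y) + 1 < \infty$ by the triangle inequality, so $Q_i \subset A_x$. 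For closedness in $\mathcal{O}$: if $y \in \mathcal{O}$ lies in the $\mathcal{O}$-closure of $A_x$, pick $Q_i \ni y$, which is open in $\mathcal{O}$, so $Q_i \cap A_x \neq \emptyset$; for any $z$ in this intersection, concatenating a chain from $x$ to $z$ with the extra set $Q_i$ produces a chain from $x$ to $y$, giving $y \in A_x$. Since $A_x$ is a nonempty clopen subset of $\mathcal{O}$, it contains the entire connected component of $x$, which is the desired conclusion.

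There is no real obstacle here; the argument reduces to the familiar clopen-component technique. The only point requiring mild care is the convention that $d_{\mathcal{Q}}$ is allowed to attain the value $\infty$, so that the statement neatly expresses the dichotomy that $d_{\mathcal{Q}}$ is finite within components and infinite between them. Neither direction uses admissibility or any structured-covering hypothesis, only that $\mathcal{Q}$ covers $\mathcal{O}$ by open connected sets.
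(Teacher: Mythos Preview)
Your proof is correct and follows essentially the same approach as the paper: the forward direction uses that a $\mathcal{Q}$-chain yields a connected union containing both points, and the reverse direction shows that the set $A_x$ (equivalently, the $\sim$-equivalence class of $x$) is clopen in $\mathcal{O}$. The only cosmetic difference is that the paper deduces closedness from the fact that the equivalence classes partition $\mathcal{O}$ into open sets, whereas you verify closedness of $A_x$ directly; both are standard and equivalent.
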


\begin{proof}
We consider the equivalence relation
$x \sim y :\Leftrightarrow d(x,y) < \infty $. Fix $x$, and let
$y \in \mathcal{O}$ with $ x \sim y$. By definition there exists a
$\mathcal{Q}$-chain connecting $x$ and $y$. Since the union of two connected
sets with nontrivial intersection is connected again, we may thus conclude
that $x$ and $y$ are contained in a common connected subset of
$\mathcal{O}$. Hence the equivalence class of $x$ modulo $\sim $ is contained
in the connected component of $x$.

To see the converse direction, observe that for each
$x \in \mathcal{O}$, there exists $i \in I$ with $x \in Q_{i}$, and thus
$d(x,y) \le 1 < \infty $ for all $y \in Q_{i}$, which is an open set. This
shows that all $\sim $-equivalence classes are open, and consequently closed,
since they partition $\mathcal{O}$. Hence the connected component of
$x$ is contained in the $\sim $-equivalence class.
\end{proof}

\begin{remark}
\label{rem:metric_inf_ok}
As we remarked above, we allow our metrics $d_{\mathcal{Q}}$ to assume
the value $\infty $, which constitutes a mild abuse of notation. For the
purpose of the subsequent arguments, this abuse can be justified as follows:
We will only be interested in the comparison of different coverings
$\mathcal{Q},\mathcal{P}$ of the same set $\mathcal{O}$, which amounts
to studying the coarse properties of the identity map
${\mathrm{id}}_{\mathcal{O}} : (\mathcal{O},d_{\mathcal{P}}) \to (
\mathcal{O},d_{\mathcal{Q}})$. Since every connected component of
$\mathcal{O}$ is mapped onto itself, one has
$d_{\mathcal{P}}(x,y) < \infty $ iff
$d_{\mathcal{Q}}(x,y) < \infty $. Furthermore, we will often restrict attention
to the case that $\mathcal{O}$ has finitely many connected components.

Under these restrictions, one can extend the various properties of maps
between metric spaces (understood in the strict sense) to the setting of
generalized metrics (allowed to take the value $\infty $), by requiring
that for each connected component
$\mathcal{O}_{0} \subset \mathcal{O}$, the restriction
$id_{\mathcal{O}_{0}} : (\mathcal{O}_{0},d_{\mathcal{P}}) \to (
\mathcal{O}_{0},d_{\mathcal{Q}})$ has the required properties. Then all
results from the previous section become available for the slightly more
general setting. Note in particular that
${\mathrm{id}} _{\mathcal{O}} : (\mathcal{O},d_{\mathcal{P}}) \to (
\mathcal{O},d_{\mathcal{Q}})$ is a quasi-isometry if and only if all restrictions
to connected components are, provided the number of connected components
is finite. Here we simply take minima resp. maxima of the finitely many
constants involved in the formulation of quasi-isometry statements for
each component, to obtain globally valid constants.
\end{remark}

\begin{remark}
\label{rem:ls_geodesic}
We note that each covering-induced metric $d_{\mathcal{P}}$ is $c$-large
scale geodesic, for $c=1$.
\end{remark}

The property of weak equivalence of two admissible coverings has a characterization
in terms of the coarse structure of their associated metrics. A result
similar to $i) \Rightarrow ii)$ in the following Theorem can be found in
\cite[Proposition 3.8 C)]{FeichtingerGroebnerBanachSpacesOfDistributions}.

\begin{theorem}%
\label{thm:WeakEquivQuasiIso}
Let $\mathcal{O}\subset \mathbb{R}^{d}$ be open and let
$\mathcal{Q}= (Q_{i})_{i\in I}$ and $\mathcal{P}=(P_{j})_{j\in J}$ be admissible
coverings of $\mathcal{O}$ comprised of open connected subsets of
$\mathcal{O}$. Then the following statements are equivalent:
\begin{enumerate}[ii)]
\item[i)] The coverings $\mathcal{Q}$ and $\mathcal{P}$ are weakly equivalent.
\item[ii)] The map
$\mathrm{id}:(\mathcal{O}, d_{\mathcal{Q}}) \to (\mathcal{O}, d_{
\mathcal{P}}),\ x\mapsto x$ is a quasi-isometry.
\end{enumerate}
If $\mathcal{O}$ has only finitely many connected components, $(i)$ and $(ii)$ are
equivalent to
\begin{enumerate}[(iii)]
\item[(iii)] The map
$\mathrm{id}:(\mathcal{O}, d_{\mathcal{Q}}) \to (\mathcal{O}, d_{
\mathcal{P}}),\ x\mapsto x$ is a coarse equivalence.
\end{enumerate}
\end{theorem}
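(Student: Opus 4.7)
The plan is to prove (i) $\Leftrightarrow$ (ii) in full generality, then upgrade to (iii) via \reftext{Proposition~\ref{prop:ce_eq_qi}} under the finiteness hypothesis on connected components.

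For (i) $\Rightarrow$ (ii), the central geometric observation is that weak subordinateness of $\mathcal{Q}$ to $\mathcal{P}$, combined with connectedness of each $Q_i$, yields a uniform bound on the $d_{\mathcal{P}}$-diameter of the sets $Q_i$. Fix $i \in I$ and set $N_1 := N(\mathcal{Q},\mathcal{P}) \geq |J_i|$. The family $\{P_j \cap Q_i : j \in J_i\}$ is a finite nonempty open cover of $Q_i$. On $J_i$ define the graph with $j \sim j'$ whenever $(P_j \cap Q_i) \cap (P_{j'} \cap Q_i) \neq \emptyset$; any disconnection of this graph would partition $Q_i$ into two disjoint nonempty open subsets, contradicting connectedness. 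Hence any two vertices are joined by a path of length at most $|J_i| \leq N_1$, which translates directly to a $\mathcal{P}$-chain in the sense of \reftext{Definition~\ref{def:SetOfNeighbors}}, so $d_{\mathcal{P}}(x,y) \leq N_1$ for all $x,y \in Q_i$. Given an arbitrary $\mathcal{Q}$-chain $Q_{i_1}, \dots, Q_{i_m}$ realizing $m = d_{\mathcal{Q}}(x,y)$, selecting intermediate points in each $Q_{i_k} \cap Q_{i_{k+1}}$ and concatenating the local $\mathcal{P}$-chains yields $d_{\mathcal{P}}(x,y) \leq N_1 \cdot d_{\mathcal{Q}}(x,y)$. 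The symmetric inequality follows with $N_2 := N(\mathcal{P},\mathcal{Q})$, giving a quasi-isometric embedding with additive constant $0$ and multiplicative constant $\max(N_1,N_2)$. Surjectivity of $\mathrm{id}$ supplies coarse surjectivity.

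For (ii) $\Rightarrow$ (i), assume the quasi-isometry inequality with constants $L,C$. Fix $j \in J$ with $I_j \neq \emptyset$ and select $i_0 \in I_j$. For any $i \in I_j$, pick $x \in Q_i \cap P_j$ and $y \in Q_{i_0} \cap P_j$; since $d_{\mathcal{P}}(x,y) \leq 1$, we obtain $d_{\mathcal{Q}}(x,y) \leq L + C$. A witnessing $\mathcal{Q}$-chain, combined with $x \in Q_i$ and $y \in Q_{i_0}$, produces a walk from $i$ to $i_0$ of length at most $L + C + 1$ in the $\mathcal{Q}$-adjacency graph on $I$. Since this graph has degree bounded by the admissibility constant of $\mathcal{Q}$, the number of such $i$ is bounded independently of $j$, so $N(\mathcal{P},\mathcal{Q}) < \infty$. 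Exchanging the roles of $\mathcal{Q}$ and $\mathcal{P}$ yields $N(\mathcal{Q},\mathcal{P}) < \infty$, and hence weak equivalence.

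The equivalence (ii) $\Leftrightarrow$ (iii) under the finiteness-of-components hypothesis rests on two facts. First, \reftext{Remark~\ref{rem:ls_geodesic}} asserts that each $d_{\mathcal{P}}$ is $1$-large-scale geodesic on every connected component, with the chain of length $d_{\mathcal{P}}(x,y)$ itself providing the required sequence. Second, \reftext{Proposition~\ref{prop:ce_eq_qi}} then implies that on each component, $\mathrm{id}$ is a quasi-isometry iff it is a coarse equivalence. Finiteness of components allows us to take maxima and minima of the componentwise constants to obtain uniform ones, invoking \reftext{Remark~\ref{rem:metric_inf_ok}} to interpret the notions for the generalized metric taking the value $\infty$ across components. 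The principal obstacle throughout is the diameter bound in (i) $\Rightarrow$ (ii): this is exactly the step where openness and connectedness of the covering sets enter, via the intersection-graph argument. Once this is in place, the remaining implications reduce to counting arguments based on admissibility and a transfer across finitely many components.
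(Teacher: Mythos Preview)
Your proof is correct. For $(i)\Rightarrow(ii)$ your intersection-graph argument is equivalent to the paper's clopen argument: both exploit that a connected set covered by finitely many open sets cannot be split, and both arrive at the same diameter bound $d_{\mathcal P}|_{Q_i\times Q_i}\le N(\mathcal Q,\mathcal P)$. The concatenation step and the $(ii)\Leftrightarrow(iii)$ passage via \reftext{Proposition~\ref{prop:ce_eq_qi}} and \reftext{Remark~\ref{rem:metric_inf_ok}} match the paper exactly.

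The genuine difference is in $(ii)\Rightarrow(i)$. The paper argues by contrapositive: if $\mathcal Q$ is not weakly subordinate to $\mathcal P$, it finds sets $Q_{i_n}$ meeting more than $N_{\mathcal P}^{n+1}$ members of $\mathcal P$, and then exhibits points $x_n,y_n\in Q_{i_n}$ with $d_{\mathcal P}(x_n,y_n)\ge n$ while $d_{\mathcal Q}(x_n,y_n)\le 1$, directly violating any quasi-isometry estimate. Your argument is instead direct: from the quasi-isometry bound you extract a uniform $d_{\mathcal Q}$-bound between any two points of a fixed $P_j$, translate this into a bounded-length walk in the $\mathcal Q$-adjacency graph, and invoke the bounded degree coming from admissibility to cap $|I_j|$. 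Your route is somewhat more transparent and avoids the pigeonhole computation with $N_{\mathcal P}^{n+1}$; the paper's route has the advantage of producing explicit witnesses to the failure of quasi-isometry. One minor slip: from $d_{\mathcal P}(x,y)\le 1$ and the quasi-isometry inequality you get $d_{\mathcal Q}(x,y)\le L(1+C)$ rather than $L+C$, but this does not affect the argument.
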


\begin{proof}
$i) \Rightarrow ii)$: Assume that the coverings are weakly equivalent.
Our goal is to show the existence of $L>0$ such that
\begin{align*}
L^{{-1}}d_{\mathcal{Q}}(x,y)\leq d_{\mathcal{P}}(x,y) \leq Ld_{
\mathcal{Q}}(x,y)
\end{align*}
for all $x,y\in \mathcal{O}$. It suffices to show the right inequality,
the left inequality follows by interchanging the roles of
$\mathcal{P}$ and $\mathcal{Q}$. Let $x,y \in \mathcal{O}$. We first consider
the case that $d_{\mathcal{Q}}(x,y)=1$, this implies the existence of a
set $Q\in \mathcal{Q}$ with $x,y\in Q$. The weak subordinateness of
$\mathcal{Q}$ to $\mathcal{P}$ implies that there are
$P_{1}, \ldots , P_{K} \in \mathcal{P}$ for some
$K\leq N(\mathcal{Q}, \mathcal{P})$ (see \reftext{Definition~\ref{def:NeighborsAndEquivalence}}) with the property
$Q\cap P_{j} \neq \emptyset $ for $j\in \{1, \ldots , K\}$,
$Q\subset \bigcup _{j=1}^{K} P_{j}$ and $Q\cap P =\emptyset $ for all
$P\in \mathcal{P}\setminus \{P_{1}, \ldots , P_{K}\}$.

Define the set
\begin{align*}
C_{x}:=
\Set{ z\in Q|
\begin{array}{l}
x,z \text{ are connected by a }\mathcal{P}\text{-chain }\\
(P_{\ell}^{*})_{\ell =1}^{m} \text{ of length } m\leq K \mbox{ with } P_{\ell}^{*} \cap Q \neq \emptyset \\
\text{and } P_{\ell}^{*} \neq P_{\ell '}^{*} \text{ for } \ell \neq \ell '
\end{array}
}.
\end{align*}
As in the proof of the previous lemma, the set $C_{x}$ is relatively open
in $Q$. Next, we show that it is also relatively closed in $Q$. Let
$z_{1}$ be an element of $\overline{C_{x}}^{Q}$. Because
$\mathcal{P}$ is a covering of $\mathcal{O}$, there is
$P \in \mathcal{P}$ with $z_{1}\in P$. The set $Q\cap P$ is a relatively
open neighborhood of $z_{1}$ in $Q$, this implies the existence of some
$z\in (Q\cap P) \cap C_{x}$. Hence, there is a $\mathcal{P}$-chain
$P'_{1}, \ldots , P'_{m}$ of length $m\leq K$ that connects $x$ and
$z$.

If $P=P'_{j}$ for some $j\in \{1,\ldots , m\}$, then
$P'_{1}, \ldots , P'_{j}$ is a $\mathcal{P}$-chain of length
$j\leq K$, with $P'_{j} \cap Q \neq \emptyset $, that connects $x$, and
$z_{1}\in C_{x}$.

If $P\neq P'_{j}$ for all $j\in \{1,\ldots , m\}$, the inclusion
$\{ P_{1}',\ldots ,P_{m}' \} \subset \{ P_{1},\ldots , P_{K} \}$ together
with $P_{\ell}' \neq P_{\ell '}'$ for $\ell \neq \ell '$ implies via
the {pigeonhole} principle that $m<K$. But then
$P'_{1}, \ldots , P'_{m}, P$ is a $\mathcal{P}$-chain of length
$m+1 \leq K$ that connects $x$ and $z_{1}$. Here,
$z\in P_{m}' \cap P \neq \emptyset $. In every case,
$z_{1} \in C_{x}$ and $C_{x}$ is relatively closed in $Q$. Since $Q$ is
connected, we conclude $C_{x} = Q$ and
$d_{\mathcal{P}}(x,y)\leq K \leq N(\mathcal{Q}, \mathcal{P})$.

Next, we consider the general case
$d_{\mathcal{Q}}(x,y) = m \in \mathbb{N}$. In this case, there exists a
$\mathcal{Q}$-chain $Q_{1}, \ldots , Q_{m}$ of length $m$ that connects
$x$ and $y$. The set $Q:=\bigcup _{i=1}^{m} Q_{i}$ is connected, open and
has nonempty intersection with at most
$m N(\mathcal{Q}, \mathcal{P})$ elements of the covering
$\mathcal{P}$ (every $Q_{i}$ has nonempty intersection with at most
$N(\mathcal{Q},\mathcal{P})$ elements of the covering $\mathcal{P}$ for
$i\in \{1, \ldots , m\}$). Hence, an adaptation of the argument for the
case $m=1$ leads to the inequality
\begin{align*}
d_{\mathcal{P}}(x,y) \leq mN(\mathcal{Q}, \mathcal{P}) \leq N(
\mathcal{Q}, \mathcal{P}) d_{\mathcal{Q}}(x,y) .
\end{align*}
Interchanging of the roles of the coverings leads to
\begin{align*}
N(\mathcal{P}, \mathcal{Q})^{{-1}}d_{\mathcal{Q}}(x,y) \leq d_{
\mathcal{P}}(x,y) \leq N(\mathcal{Q}, \mathcal{P}) d_{\mathcal{Q}}(x,y).
\end{align*}
This shows that ${\mathrm{id}}$ is a quasi-isometric embedding. Since it is
also bijective, it is therefore a quasi-isometry.

$ii) \Rightarrow i)$: Assume that $\mathcal{Q}$ and $\mathcal{P}$ are not
weakly equivalent. Without loss of generality, assume that
$\mathcal{Q}$ is not weakly subordinate to $\mathcal{P}$. This implies
the existence of a sequence $(Q_{i_{n}})_{n\in \mathbb{N}}$ in
$\mathcal{Q}$ such that
\begin{align*}
\mathopen{}\mathclose{\left\lvert A_{n}\right\rvert } > N_{
\mathcal{P}}^{n+1}
\end{align*}
and
$\mathopen{\lvert }A_{n}\mathclose{\rvert } < \mathopen{\lvert }A_{n+1}
\mathclose{\rvert }$ for all $n\in \mathbb{N}$, where
\begin{equation*}
A_{n}:=\Set{j\in J| Q_{i_{n}}\cap P_{j} \neq \emptyset }%
\end{equation*}
and $N_{\mathcal{P}}$ is the admissibility constant for the covering
$\mathcal{P}$.

The next step is to show that for every $x\in Q_{i_{n}}$ and all
$j_{x}\in J$ with $x\in P_{j_{x}}$, the set
$Q_{i_{n}}\setminus P_{j_{x}}^{n*}$ is nonempty and that
$d_{\mathcal{P}}(x,y)\geq n$ for $x\in Q_{i_{n}}$ and
$y\in Q_{i_{n}}\setminus P_{j_{x}}^{n*}$ (cf. \reftext{Definition~\ref{def:SetOfNeighbors}}).

Let $x\in Q_{i_{n}}$ be arbitrary for some $n\in \mathbb{N}$. Since
$\mathcal{P}$ is a covering of $\mathcal{O}$, there is at least one
$j_{x} \in J$ such that $x\in P_{j_{x}}$. If
$Q_{i_{n}}\setminus P_{j_{x}}^{n*} = \emptyset $, then
$Q_{i_{n}} \subset P_{j_{x}}^{n*}$ and for all $j\in J$ with
$P_{j} \cap Q_{i_{n}}\neq \emptyset $, we have
$P_{j}\cap P_{j_{x}}^{n*} \neq \emptyset $, which implies
$j\in j_{x}^{(n+1)*}$. Hence,
\begin{align*}
\mathopen{\lvert }A_{n}\mathclose{\rvert } \leq \mathopen{\lvert }j_{x}^{(n+1)*}
\mathclose{\rvert }\leq N_{\mathcal{P}}^{n+1},
\end{align*}
which is a contradiction to the properties of the set $A_{n}$. It follows
$Q_{i_{n}}\setminus P_{j_{x}}^{n*}\neq \emptyset $.

Let $x\in Q_{i_{n}}$ and $y\in Q_{i_{n}}\setminus P_{j_{x}}^{n*}$. If
$d_{\mathcal{P}}(x,y)<n$, then there exists a $\mathcal{P}$-chain
$P_{1}, \ldots , P_{m}$ that connects $x$ and $y$ with $m\leq n-1$. The
definition of a $\mathcal{P}$-chain implies
$y\in P_{m} \subset P_{1}^{m*}$ and $x\in P_{j_{x}} \cap P_{1}$ implies
$P_{1} \subset P_{j_{x}}^{*}$. In conclusion, we have
$y\in P_{j_{x}}^{(m+1)*} \subset P_{j_{x}}^{n*}${,} which is a contradiction
to the choice of $y$. Hence, $d_{\mathcal{P}}(x,y) \geq n$ for all
$x\in Q_{i_{n}}$ and all $y\in Q_{i_{n}}\setminus P_{j_{x}}^{n*}$ if
$x\in P_{j_{x}}$.

Now, choose $x_{n}\in Q_{i_{n}}$ and
$y_{n} \in Q_{i_{n}}\setminus P_{j_{x_{n}}}^{n*} \neq \emptyset $, where
$x_{n} \in P_{j_{x_{n}}}$. The prior part shows
$d_{\mathcal{P}}(x_{n},y_{n}) \geq n \xrightarrow{n\to \infty}
\infty $, but $d_{\mathcal{Q}}(x_{n},y_{n})\leq 1$ since
$x_{n},y_{n} \in Q_{i_{n}} \in \mathcal{Q}$. All in all, there cannot exist
constants $L, C >0$ such that
\begin{align*}
d_{\mathcal{P}}(x,y) \leq Ld_{\mathcal{Q}}(x,y) + C
\end{align*}
for all $x,y \in \mathcal{O}$. It follows that
$\mathrm{id}:(\mathcal{O}, d_{\mathcal{Q}}) \to (\mathcal{O}, d_{
\mathcal{P}})$ is not a quasi-isometry.

The equivalence $ii) \Leftrightarrow iii)$ follows from \reftext{Proposition~\ref{prop:ce_eq_qi}} and \reftext{Remarks~\ref{rem:metric_inf_ok},
\ref{rem:ls_geodesic}}.
\end{proof}

\begin{remark}
\label{rem:covering_connected_II}
Given the role of the connectedness assumption on the elements of the coverings
in the various results leading up to the theorem, it seems reasonable
to expect that they cannot be dropped from the theorem. In fact, it is
not hard to observe that, without some structural assumptions on the coverings,
\reftext{Theorem~\ref{thm:WeakEquivQuasiIso}} cannot hold. To see this, one can pick
{an admissible} covering $\mathcal{P}$, and construct a second admissible
covering $\mathcal{Q}$ from $\mathcal{P}$ by replacing suitable pairs of
sets from $\mathcal{P}$ by their unions. If the distances of the pairs
of sets making up $\mathcal{Q}$, when measured in the $\mathcal{P}$-chain
distance $d_{\mathcal{P}}$, are unbounded, $d_{\mathcal{Q}}$ will not be
quasi-equivalent to $d_{\mathcal{P}}$. On the other hand, one easily sees
that the coverings must be weakly equivalent.

However, it is currently open whether such a counterexample can be constructed
with both $\mathcal{P}$ and $\mathcal{Q}$ fulfilling the other standing
assumptions on decomposition coverings, namely tightness and structuredness,
stipulated at the beginning of Subsection \ref{subsect:dspace}. Hence it
is not clear whether the connectedness assumption is the main ingredient
making the proof of \reftext{Theorem~\ref{thm:WeakEquivQuasiIso}} work.
\end{remark}

\section{A metric characterization of coorbit equivalence}
\label{sect:metric_coorbit}

\subsection{Word metrics on locally compact groups}
\label{subsect:word_metrics}

Throughout this section, $H$ denotes an admissible dilation group, and
$H_{0}< H$ denotes the connected component of the identity element in
$H$. Since $H$ is a Lie group, $H_{0}$ is an open subgroup. We define a
metric (which is allowed to take the value $\infty $) by picking a word
metric on $H_{0}$, and suitably extending it to $H$.

\begin{definition}
\label{defn4.1}
Let $H$ be a locally compact group and let $W\subset H$ be a unit neighborhood.
Define the map
$d_{W} : H\times H \to \mathbb{N}_{0}\cup \{\infty \}$ in the following
way
\begin{align*}
d_{W}(x,y)=
\begin{cases}
\inf \Set{m \in \mathbb{N}| x^{{-1}}y \in W^{m} } & x\neq y
\\
0& x=y,
\end{cases}
\end{align*}
where we again set $\inf \emptyset = \infty $.
\end{definition}

The expression word metric is motivated by the fact that
$d_{W}(x,y) = m$ entails that the minimal number of elements (\textit{letters})
$w_{1},\ldots , w_{m} \in W$ that make the equation
$y=x w_{1} \cdots w_{m}$ true is precisely $m$. The following fact is well-known,
and we omit the proof.

\begin{lemma}
\label{lem4.2}
Let $H$ be a locally compact group and let $W\subset H$ be a symmetric
($W=W^{{-1}}$) unit neighborhood. Then $d_{W}$ is a metric on $H$, that
is \textit{left invariant}, i.e., $d_{W}(x,y) = d_{W}(zx, zy)$ for all
$x,y,z \in H$.
\end{lemma}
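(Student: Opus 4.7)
The plan is to verify the three axioms of a (possibly $\infty$-valued) metric together with left invariance, all of which reduce to elementary manipulations of the defining condition $x^{-1}y \in W^m$.

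First, for the positivity axiom, I would note that by construction $d_W$ takes values in $\mathbb{N}_0 \cup \{\infty\}$, so it is non-negative. The only nontrivial case is showing $d_W(x,y) = 0 \Rightarrow x = y$: when $x \neq y$, the infimum is taken over a subset of $\mathbb{N} = \{1,2,\dots\}$, hence is either $\geq 1$ or equal to $+\infty$; in either case, $d_W(x,y) \neq 0$.

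Next, for symmetry, I would use that $W$ is symmetric. If $x^{-1}y \in W^m$, then taking inverses yields $y^{-1}x = (x^{-1}y)^{-1} \in (W^m)^{-1} = (W^{-1})^m = W^m$. This shows that the sets of admissible exponents $m$ for $(x,y)$ and $(y,x)$ coincide, hence $d_W(x,y) = d_W(y,x)$. For the triangle inequality, suppose $d_W(x,y) = m$ and $d_W(y,z) = n$ are both finite (the case where one is infinite is trivial). Then $x^{-1}y \in W^m$ and $y^{-1}z \in W^n$, so
\[
x^{-1}z = (x^{-1}y)(y^{-1}z) \in W^m \cdot W^n \subset W^{m+n},
\]
which gives $d_W(x,z) \leq m + n = d_W(x,y) + d_W(y,z)$.

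Finally, for left invariance, the key observation is that for any $z \in H$, $(zx)^{-1}(zy) = x^{-1}z^{-1}zy = x^{-1}y$. Hence the condition $(zx)^{-1}(zy) \in W^m$ is literally identical to $x^{-1}y \in W^m$, so the infima agree and $d_W(zx,zy) = d_W(x,y)$.

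There is no real obstacle here; the proof is essentially a bookkeeping exercise relying on the symmetry $W = W^{-1}$ for the symmetry axiom, the multiplicative structure $W^m \cdot W^n \subset W^{m+n}$ for the triangle inequality, and the cancellation $(zx)^{-1}(zy) = x^{-1}y$ for left invariance. The only minor subtlety is handling the value $+\infty$ consistently in the arithmetic, which is unproblematic as long as one interprets $a + \infty = \infty$ in the triangle inequality.
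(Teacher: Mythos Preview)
Your proof is correct. The paper itself omits the proof entirely, noting only that the fact is well-known; your verification of the metric axioms and left invariance via the identities $(W^m)^{-1}=W^m$, $W^m\cdot W^n\subset W^{m+n}$, and $(zx)^{-1}(zy)=x^{-1}y$ is exactly the standard argument one would expect.
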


As for the metric in the last section, we can impose a connectedness restriction
on $W$ in such a way that precisely the elements in the same connected
component have finite distance with respect to $d_{W}$. The proof is essentially
identical to the previous one, and therefore omitted.

\begin{lemma}%
\label{lem:GroupMetricFiniteConnectedComponent}
Let $H$ be a locally compact group and let $W\subset H$ be a symmetric
unit neighborhood with $W\subset H_{0}$, where $H_{0}$ denotes the connected
component of the neutral element in $H$. Then $d_{W}(x,y)< \infty $ if
and only if $x,y$ lie in the same connected component of $H$.
\end{lemma}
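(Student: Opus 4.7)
The plan is to adapt the argument of Lemma~\ref{lem3.19} almost verbatim to the group setting, replacing the covering $\mathcal{Q}$ by the family of left translates $\{xW : x \in H\}$ and the $\mathcal{Q}$-chain relation by the relation $x \sim y :\Leftrightarrow d_W(x,y) < \infty$. Symmetry of $W$ ensures that $\sim$ is indeed a symmetric relation, and the elementary inclusion $W^m \cdot W^n \subset W^{m+n}$ gives transitivity, so $\sim$ is an equivalence relation on $H$.

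The first step would be to observe that every $\sim$-equivalence class is open. Given $x \in H$, the set $xW$ is an open neighborhood of $x$, and every $y \in xW$ satisfies $x^{-1}y \in W$, so $d_W(x,y) \leq 1$. This shows that $xW$ is contained in the equivalence class of $x$. Since the equivalence classes partition $H$, each of them, being the complement of a union of the other (open) classes, is also closed.

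The next step is to relate the equivalence class of $x$ to the connected component $xH_0$. Since $W$ is symmetric, the equivalence class of the identity equals $\bigcup_{m \in \mathbb{N}_0} W^m = \langle W \rangle$, the subgroup generated by $W$, and by left invariance of $d_W$ the class of $x$ equals $x\langle W\rangle$. The assumption $W \subset H_0$ together with the fact that $H_0$ is a subgroup yields $\langle W \rangle \subset H_0$, so the class of $x$ is contained in the connected set $xH_0$, which is itself the connected component of $x$ (since $H_0$ is the identity component of the topological group $H$).

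Combining these observations: within the connected set $xH_0$, the $\sim$-equivalence classes form a partition into relatively open (hence clopen) subsets; by connectedness of $xH_0$ there can be only one such block, which must therefore coincide with $xH_0$. This yields $d_W(x,y) < \infty$ if and only if $y \in xH_0$, i.e.\ if and only if $x$ and $y$ lie in the same connected component of $H$. The only mild subtlety I anticipate is the identification of the equivalence class of $x$ with the full coset $xH_0$ rather than merely a subset, but as above this is forced by the clopen/connectedness dichotomy and does not require any deeper input.
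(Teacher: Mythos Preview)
Your proposal is correct and follows exactly the route the paper intends: the paper omits the proof, stating it is ``essentially identical to the previous one'' (i.e., to Lemma~\ref{lem3.19}), and your argument is precisely that adaptation, replacing the covering $\mathcal{Q}$ by the left translates $xW$ and using that $H_0$ is a subgroup to confine $\langle W\rangle$ to the identity component.
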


We next recall that the metric spaces $(H, d_{W})$ and $(H, d_{V})$ are
coarsely equivalent for relatively compact $W, V \subset H_{0}$. Here we
refer to \cite[Corollary 4.A.6]{Cornulier_2016}

\begin{lemma}%
\label{lem:DifferentNeighborhoodsCoarselyEquivalent}
Let $H$ be a locally compact group and let $V, W\subset H$ be relatively
compact, symmetric unit neighborhoods with $V, W\subset H_{0}$. Then
\begin{align*}
\mathrm{id}_{W}^{V}: (H, d_{W}) \to (H, d_{V}), h\mapsto h
\end{align*}
is a quasi-isometry and the spaces $(H, d_{W})$ and $(H, d_{V})$ are coarsely
equivalent.
\end{lemma}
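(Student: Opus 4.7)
The plan is to prove that $\mathrm{id}_W^V$ is bi-Lipschitz in the sense of the word metrics (quasi-isometric embedding with additive constant $C = 0$) and that, being bijective, it is automatically coarsely surjective, hence a quasi-isometry; coarse equivalence is then immediate from \reftext{Theorem~\ref{thm:QuasiIsoCoarseEqiuv}}.

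The heart of the argument is to produce integers $k, n \geq 1$ with $W \subset V^k$ and $V \subset W^n$. For this I would first note that $H_0$, as the identity component of a Lie group, is both open and closed in $H$, hence $\overline{W} \subset H_0$ because $W \subset H_0$ is relatively compact and $H_0$ is closed. Next, since $V$ is a symmetric neighborhood of the identity in the connected group $H_0$, the set $\bigcup_{m \geq 1} V^m$ is an open subgroup of $H_0$; its complement being a union of (open) cosets, it is also closed, and by connectedness of $H_0$ one concludes $\bigcup_{m \geq 1} V^m = H_0$. Because $e \in V$ gives the increasing chain $V \subset V^2 \subset V^3 \subset \cdots$, compactness of $\overline{W}$ selects a single exponent $k$ with $\overline{W} \subset V^k$, whence $W \subset V^k$. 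Swapping the roles of $V$ and $W$ yields $n$ with $V \subset W^n$.

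These containments transfer directly to the word metrics: if $d_W(x,y) = m < \infty$, then $x^{-1}y \in W^m \subset V^{km}$, so $d_V(x,y) \leq k \cdot d_W(x,y)$, and symmetrically $d_W(x,y) \leq n \cdot d_V(x,y)$. Setting $L := \max(k,n)$ gives
\begin{equation*}
L^{-1} d_W(x,y) \leq d_V(x,y) \leq L\, d_W(x,y)
\end{equation*}
on pairs lying in the same connected component of $H$. For $x,y$ in different components, \reftext{Lemma~\ref{lem:GroupMetricFiniteConnectedComponent}} yields $d_W(x,y) = d_V(x,y) = \infty$, so the inequalities hold trivially. Bijectivity of $\mathrm{id}_W^V$ provides coarse surjectivity, finishing the quasi-isometry claim, and \reftext{Theorem~\ref{thm:QuasiIsoCoarseEqiuv}} then delivers the stated coarse equivalence.

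The only real (and mild) obstacle is the single-exponent bound $\overline{W} \subset V^k$; this relies on the somewhat delicate combination of three facts — that a symmetric unit neighborhood generates any connected topological group, that $H_0$ is closed so that $\overline{W} \subset H_0$, and that the powers $V^m$ form an increasing exhaustion of $H_0$ — rather than on any deep computation.
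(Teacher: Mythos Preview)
Your argument is correct. The paper itself does not prove this lemma but simply cites \cite[Corollary 4.A.6]{Cornulier_2016}; what you have written is the standard elementary proof, and in fact the same compactness-plus-connectedness trick you use (showing $\overline{W}\subset V^{k}$ via $H_{0}=\bigcup_{m}V^{m}$) reappears verbatim later in the paper in the proof of \reftext{Lemma~\ref{lem:well_spread_coarse_equiv}}. One cosmetic remark: you justify openness of $H_{0}$ by invoking the Lie group structure, which is fine in the paper's standing context, but note that the hypothesis alone already forces it --- a unit neighborhood $V\subset H_{0}$ makes $H_{0}$ a neighborhood of $e$, hence open in any topological group. With that adjustment your proof works verbatim for arbitrary locally compact $H$.
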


In the terminology of \cite{Cornulier_2016}, $U$-well-spread families are
metric lattices with respect to a properly chosen word metric. The following
lemma is a further example of a folklore result for which we could not
locate a handy reference.

\begin{lemma}
\label{lem:well_spread_coarse_equiv}
Let $H$ be a locally compact group, let $W\subset H$ be a relatively compact,
symmetric unit neighborhood with $W\subset H_{0}$. Furthermore, let
$X \subset H$ be a $U$-well-spread family for some relatively compact unit
neighborhood $U \subset H_{0}$. Then $(H, d_{W})$ and
$(X, d_{W}|_{X\times X})$ are coarsely equivalent.
\end{lemma}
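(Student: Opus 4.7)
The plan is to prove that the inclusion $\iota: X \hookrightarrow H$ is a quasi-isometry from $(X, d_W|_{X\times X})$ to $(H, d_W)$; the coarse equivalence will then follow from \reftext{Theorem~\ref{thm:QuasiIsoCoarseEqiuv}}. The quasi-isometric embedding condition is immediate: since $d_W|_{X\times X}$ is by definition the literal restriction, $\iota$ is in fact an isometric embedding, so the two-sided bound in \reftext{Definition~\ref{def:quasi-isometry}} holds with $L=1, C=0$ (and remains valid across components, where both sides equal $\infty$). Hence the substantive work lies in verifying coarse surjectivity.

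To prove $\iota$ is coarsely surjective, I would seek $K > 0$ such that every $h \in H$ satisfies $d_W(x, h) \le K$ for some $x \in X$. By $U$-density of the well-spread family, every $h \in H$ lies in $xU$ for some $x \in X$, i.e.\ $x^{-1}h \in U$. Since $d_W$ is left-invariant by \reftext{Lemma~\ref{lem4.2}}, it then suffices to find $N \in \mathbb{N}$ with $U \subset W^N$, for then $d_W(x, h) = d_W(e, x^{-1}h) \le N$. To produce such an $N$, I would first replace $W$ by an open symmetric unit neighborhood $W' \subset W$ (which exists since $W$ is a unit neighborhood). The set $\bigcup_{n \ge 1} (W')^n$ is then an open subgroup of $H$; open subgroups are closed, and this subgroup is contained in $H_0$ and contains the identity, so connectedness of $H_0$ forces $\bigcup_{n \ge 1} (W')^n = H_0$. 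Since $\overline{U}$ is compact and contained in $H_0$, the nested open cover $\{(W')^n\}_{n}$ of $\overline{U}$ yields $\overline{U} \subset (W')^N \subset W^N$ for some $N$. Observe further that $x$ and $h$ automatically lie in the same connected component of $H$ (as $x^{-1}h \in U \subset H_0$), so the finite bound is consistent with \reftext{Lemma~\ref{lem:GroupMetricFiniteConnectedComponent}}; in particular $X$ must meet every component of $H$ that is hit by $XU = H$.

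The main obstacle is the uniform inclusion $U \subset W^N$, for which connectedness of $H_0$ and compactness of $\overline{U}$ jointly enter. The mild technicality of first passing to an open symmetric unit neighborhood $W' \subset W$ is unavoidable because the definition of $d_W$ only requires $W$ to be a unit neighborhood, not necessarily open; without openness the open-subgroup argument and the finite-subcover conclusion both break down. Once this bound is in hand, combining coarse surjectivity with the trivial isometric embedding yields a quasi-isometry, and \reftext{Theorem~\ref{thm:QuasiIsoCoarseEqiuv}} provides the coarse equivalence.
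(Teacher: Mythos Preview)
Your proof is correct and follows essentially the same route as the paper's: show the inclusion is an isometric (hence quasi-isometric) embedding, establish coarse surjectivity by covering $\overline{U}$ with finitely many powers of $W$ via connectedness of $H_0$, and conclude with \reftext{Theorem~\ref{thm:QuasiIsoCoarseEqiuv}}. Your extra step of passing to an open symmetric $W'\subset W$ is a legitimate refinement---the paper simply asserts that $\bigcup_n W^n$ is open and forms an increasing open cover without commenting on the fact that $W$ itself is only assumed to be a neighborhood---but this is a minor technical polish rather than a different argument.
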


\begin{proof}
The identity
\begin{align*}
d_{W}|_{X\times X}(x,y) = d_{W}(x,y)
\end{align*}
for all $x,y \in X$ shows that $g:X\to H, x\mapsto x$ is a quasi-isometric
embedding. It is coarsely surjective because for every $y \in H$ exists
an $x_{y}\in X$ such that $y\in x_{y}U$, which implies
$x_{y}^{{-1}}y \in U$.

Since $W$ is symmetric, $\bigcup _{n \in \mathbb{N}} W^{n}$ is easily seen
to be an open subgroup of $H_{0}$, and connectedness of $H_{0}$ then entails
$H_{0} = \bigcup _{n \in \mathbb{N}} W^{n}$. In fact, the right hand side
is an open covering by increasing sets. Since
$\overline{U}\subset H_{0}$ is compact, this entails the existence of
$n\in \mathbb{N}$ with $U \subset W^{n}$. Hence,
\begin{align*}
d_{W}(x_{y},y) \leq n.
\end{align*}
It follows that $g$ is a quasi-isometry and that
\begin{equation*}
(H, d_{W}) \text{ and } (X, d_{W}|_{X\times X})%
\end{equation*}
are coarsely equivalent, according to \reftext{Theorem~\ref{thm:QuasiIsoCoarseEqiuv}}.
\end{proof}

The last result of this section will be useful, but has a straightforward
proof, which is therefore omitted.

\begin{lemma}%
\label{lem:topologicalGroupIsoCoarseEq}
Let $H, H'$ be locally compact groups, let $W\subset H$ be a relatively
compact, symmetric unit neighborhood with $W\subset H_{0}$ and let
$V\subset H'$ be a relatively compact, symmetric unit neighborhood with
$V\subset H_{0}'$. Assume that $\phi :H\to H'$ is a topological group isomorphism,
i.e. it is a continuous isomorphism with continuous inverse. Then
$\phi $ is a quasi-isometry from $(H, d_{W})$ to $(H', d_{V})$. In particular,
$(H, d_{W})$ and $(H', d_{V})$ are coarsely equivalent.
\end{lemma}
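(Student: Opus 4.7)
The plan is to reduce the statement to Lemma \ref{lem:DifferentNeighborhoodsCoarselyEquivalent} by transporting the word metric on $H$ through $\phi$ to obtain a word metric on $H'$ associated to $\phi(W)$, and then to compare this with $d_V$.

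First I would verify that $W' := \phi(W) \subset H'$ is a relatively compact, symmetric unit neighborhood contained in $H_0'$. Symmetry is immediate from $\phi(W^{-1}) = \phi(W)^{-1}$; relative compactness and the unit neighborhood property come from the fact that $\phi$ is a homeomorphism. Finally, since a topological group isomorphism sends the connected component of the identity to the connected component of the identity, $W \subset H_0$ implies $W' \subset H_0'$.

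Next I would show that $\phi: (H, d_W) \to (H', d_{W'})$ is in fact an \emph{isometry} (not merely a quasi-isometry). The reason is that $\phi$, being a group homomorphism, sets up a bijection between expressions $x^{-1}y = w_1 \cdots w_m$ with $w_i \in W$ and expressions $\phi(x)^{-1}\phi(y) = \phi(w_1) \cdots \phi(w_m)$ with $\phi(w_i) \in W'$. Consequently $d_{W'}(\phi(x), \phi(y)) = d_W(x,y)$ for all $x,y \in H$, and $\phi$ is bijective, so it is (coarsely) surjective.

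Finally I would invoke Lemma \ref{lem:DifferentNeighborhoodsCoarselyEquivalent} applied to the two relatively compact, symmetric unit neighborhoods $W', V \subset H_0'$, which gives that $\mathrm{id}_{H'}: (H', d_{W'}) \to (H', d_V)$ is a quasi-isometry. Composing this with the isometry $\phi: (H, d_W) \to (H', d_{W'})$, and using the fact that compositions of quasi-isometries are quasi-isometries, one obtains that $\phi: (H, d_W) \to (H', d_V)$ is a quasi-isometry. The coarse equivalence conclusion then follows from Theorem \ref{thm:QuasiIsoCoarseEqiuv}. There is no real obstacle here; the only point to watch is that $\phi(W)$ genuinely lands in $H_0'$, which is exactly where the hypothesis that $\phi$ is a topological group isomorphism (and not merely a measurable or abstract group isomorphism) is used.
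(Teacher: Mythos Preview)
Your proof is correct. The paper actually omits the proof of this lemma entirely, declaring it ``straightforward'', so there is no argument to compare against; your reduction via $W' = \phi(W)$ to Lemma~\ref{lem:DifferentNeighborhoodsCoarselyEquivalent} is exactly the kind of routine verification the authors had in mind.
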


\subsection{Coarse equivalence of the dual orbit and the dilation group}
\label{subsect:CoarseEquivalenceOfOrbitAndGroup}

In this section, we investigate the relation between the coarse structure
induced on the dual orbit $\mathcal{O}\subset \mathbb{R}^{d}$ of an admissible
group $H\subset \mathrm{GL}(\mathbb{R}^{d})$ by means of an induced covering
on the one hand, and the coarse structure induced by a relatively compact
unit neighborhood on the group $H$ on the other. It is the main purpose
of this subsection to prove that the orbit map
\begin{equation*}
p_{\xi}:H \to \mathcal{O}, h\mapsto h^{{-T}}\xi
\end{equation*}
is a quasi-isometry, when $H$ is endowed with a suitable metric, and
$\mathcal{O}$ is endowed with the covering-induced metric.

The next result collects various topological properties of the orbit map.
%
\begin{lemma}
\label{lem:OrbitMapProperties}
For each $\xi \in \mathcal{O}$, the orbit map is continuous, surjective,
closed, open, and proper, i.{e.} $p_{\xi}^{{-1}}(K)\subset H$ is compact for
compact $K\subset \mathcal{O}$.

Furthermore, the orbit $\mathcal{O}$ has finitely many components.
\end{lemma}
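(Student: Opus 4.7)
The plan is to verify each property of $p_\xi$ in turn and then settle the component count of $\mathcal{O}$; the first four properties follow from standard Lie-theoretic arguments, while the finiteness of connected components of $\mathcal{O}$ is where I expect the main subtlety.

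Continuity is immediate because $p_\xi$ is the composition of the smooth map $h \mapsto h^{-T}$ on $GL(\mathbb{R}^d)$ with linear evaluation at $\xi$. Surjectivity follows from admissibility: since $\mathcal{O}$ is the single dual orbit and $\xi \in \mathcal{O}$, writing $\xi = g^{-T}\xi_0$ gives $\eta = h^{-T}\xi_0 = (hg^{-1})^{-T}\xi = p_\xi(hg^{-1})$ for any $\eta \in \mathcal{O}$. For openness I would exploit the equivariance $p_\xi(hk) = k^{-T} p_\xi(h)$, which shows that $p_\xi$ intertwines right translation on $H$ with the dual action on $\mathcal{O}$, both implemented by diffeomorphisms. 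Hence $dp_\xi$ has constant rank on $H$; since $p_\xi(H) = \mathcal{O}$ is open in $\mathbb{R}^d$, this rank must equal $d$, so $p_\xi$ is a submersion and in particular open.

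For properness and closedness I would use the compactness of $H_\xi$ supplied by admissibility, and factor $p_\xi = \overline{p}_\xi \circ \pi$ through the quotient $\pi: H \to H/H_\xi$. The quotient $\pi$ is proper because it is a quotient by a compact subgroup (closed map, with compact fibres $hH_\xi$). The induced continuous bijection $\overline{p}_\xi: H/H_\xi \to \mathcal{O}$ inherits openness from $p_\xi$ (if $U \subset H/H_\xi$ is open, then $\pi^{-1}(U)$ is open in $H$ and $p_\xi(\pi^{-1}(U)) = \overline{p}_\xi(U)$ is open in $\mathcal{O}$), so it is a homeomorphism, in particular proper. Properness of $p_\xi$ is then inherited from the composition of proper maps, and closedness follows because any proper continuous map into a locally compact Hausdorff space is closed.

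The finiteness of the number of components of $\mathcal{O}$ is the most delicate step. I would first observe that $p_\xi(H_0)$ is open and connected in $\mathcal{O}$, and that $\mathcal{O} \setminus p_\xi(H_0) = \bigcup_{h \notin H_0} p_\xi(hH_0)$ is open as a union of open sets; hence $p_\xi(H_0)$ is clopen and connected, i.e., a component. Every component therefore arises as $p_\xi(hH_0)$ for some $h \in H$, and an elementary computation using $H_0 \trianglelefteq H$ shows that two such cosets give the same component iff $h_2 \in H_\xi h_1 H_0$, so the components of $\mathcal{O}$ are in bijection with the double coset space $H_\xi \backslash H / H_0$. The main obstacle is proving that this double coset space is finite: compactness of $H_\xi$ combined with the discreteness of $H/H_0$ ensures that each individual $H_\xi$-orbit on $H/H_0$ is finite (since $H_\xi \cap H_0$ is open in $H_\xi$), but to reduce to finitely many such orbits I would appeal to an additional structural input for admissible dilation groups, as established in \cite{KochDoktorarbeit} in the context underlying \reftext{Lemma~\ref{cor:ExistenceInducedConnectedCovering}}. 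An alternative route would be to exploit that the complement $\mathbb{R}^d \setminus \mathcal{O}$ is cut out by the degeneracy locus of the dual action and hence has finitely many components by a semi-algebraic argument, which then transfers to $\mathcal{O}$.
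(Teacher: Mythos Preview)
Your argument for continuity, surjectivity, openness, properness and closedness is correct and considerably more explicit than the paper's, which simply cites \cite{FuehrContinuousWaveletTransformsWithAbelian} and \cite[Corollary~4.1.3]{Voigtlaender2015PHD} for all of these except closedness, and then derives closedness from properness by an explicit subsequence argument. Your route to closedness (proper continuous map into a locally compact Hausdorff space is closed) is the abstract version of exactly that argument; the paper just unwinds it. Your properness proof via the factorization $H \to H/H_\xi \xrightarrow{\ \cong\ } \mathcal{O}$ is the standard one and is what the cited references ultimately use.

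On the finite-component claim, the paper does no more than you do: it is also absorbed into the citation of Voigtlaender's thesis. Your double-coset analysis is correct, and you are right that finiteness of $H_\xi \backslash H / H_0$ is not automatic from compactness of $H_\xi$ alone (one needs $H/H_0$ finite, which is not assumed). Your second route is the one that actually closes the gap, but it should be phrased more carefully. The point is not that $\mathbb{R}^d \setminus \mathcal{O}$ has finitely many components and that this ``transfers'' to $\mathcal{O}$; rather, one shows directly that $\mathcal{O}$ coincides with the Zariski-open set $\{\xi \in \mathbb{R}^d : \mathfrak{h}^{T}\xi = \mathbb{R}^d\}$. One inclusion is clear (openness of the orbit forces surjectivity of the infinitesimal action at each of its points); for the other, if $\mathfrak{h}^{T}\xi = \mathbb{R}^d$ then the $H_0$-orbit of $\xi$ is open, hence the $H$-orbit of $\xi$ is open, hence equals $\mathcal{O}$ by uniqueness of the open dual orbit. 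Thus $\mathbb{R}^d \setminus \mathcal{O}$ is a real algebraic set (the vanishing locus of the maximal minors of the linear map $\mathfrak{h} \to \mathbb{R}^d$, $X \mapsto X^T\xi$), and the complement of a real algebraic set in $\mathbb{R}^d$ has finitely many connected components. Note that this works even though $H$ itself is not assumed algebraic: only the Lie algebra enters.
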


\begin{proof}
We refer to \cite{FuehrContinuousWaveletTransformsWithAbelian} and
\cite[Corollary 4.1.3]{Voigtlaender2015PHD} for all properties except closedness
of $p_{\xi}$. But this property follows from properness: Let
$A \subset H$ be closed, and let
$(h_{n})_{n \in \mathbb{N}} \subset A$ with
$p_{\xi}(h_{n}) \to \eta \in \mathcal{O}$. Then
$K = \{ p_{\xi}(h_{n}) : n \in \mathbb{N} \} \cup \{ \eta \} \subset
\mathcal{O}$ is compact, and then the same follows (by properness) for
$p_{\xi}^{-1}(K) \subset H$. This set contains the sequence
$(h_{n})_{n \in \mathbb{N}}$. Hence, possibly after passing to a subsequence,
$h_{n} \to h \in H$, and then $h \in A$ by closedness of $A$. But this
finally entails
$\eta = \lim _{n \to \infty} p_{\xi}(h_{n}) = p_{\xi}(h) \in p_{\xi}(A)$,
as desired.
\end{proof}

Recall from Section~\ref{sect:metric_reformulation}, that we are primarily
interested in induced coverings consisting of connected sets. In order
to properly relate the metric arising from the induced covering with a
word metric on $H$, the condition $H_{\xi }\subset H_{0}$ will be important,
where $H_{0} \subset H$ denotes the connected component of the identity
element in $H$. The following lemma makes some basic observations following
from this assumption.

\begin{lemma}
\label{lem:conn_comps}
Assume that the isotropy group
$H_{\xi}=\Set{h\in H| p_{\xi}(h) = \xi}$ is a subset of the identity component
$H_{0} \subset H$.
\begin{enumerate}[(ii)]
\item[(i)] The orbit map $p_{\xi}$ induces a bijection between the sets
of connected components of $H$ and those of $\mathcal{O}$.
\item[(ii)] For every connected set $B\subset \mathcal{O}$, there is a
connected set $A\subset H$ such that
\begin{align*}
p_{\xi}^{{-1}}(B) \subset A.
\end{align*}
\end{enumerate}
\end{lemma}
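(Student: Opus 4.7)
The plan hinges on identifying the connected components of $H$ with the cosets $hH_0$ of the open normal subgroup $H_0$, and using the topological properties of $p_{\xi}$ collected in \reftext{Lemma~\ref{lem:OrbitMapProperties}}. The first step is to observe that, for any component $C = hH_0$ of $H$, the image $p_{\xi}(C)$ is simultaneously connected (by continuity of $p_{\xi}$), open (by openness of $p_{\xi}$), and closed in $\mathcal{O}$ (by closedness of $p_{\xi}$). It is therefore a connected component of $\mathcal{O}$, and since $\mathcal{O}$ has only finitely many components, this already restricts the combinatorics considerably.

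For part (i), I would then define a map $\Phi$ from the set of components of $H$ to the set of components of $\mathcal{O}$ by $\Phi(hH_0) := p_{\xi}(hH_0)$, which is well-defined by the previous paragraph. Surjectivity of $\Phi$ follows immediately from surjectivity of $p_{\xi}$: given a component $D \subset \mathcal{O}$, any $\eta \in D$ has a preimage $h \in H$, and the component $p_{\xi}(hH_0)$ meets $D$, hence coincides with it. For injectivity, assume $p_{\xi}(h_1 H_0) = p_{\xi}(h_2 H_0)$; then there is $g \in H_0$ with $p_{\xi}(h_1) = p_{\xi}(h_2 g)$. A direct computation using $p_{\xi}(h) = h^{-T}\xi$ shows that $h_1 (h_2 g)^{-1} \in H_{\xi}$. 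The hypothesis $H_{\xi} \subset H_0$, combined with the normality of $H_0$ in $H$, then forces $h_1 \in h_2 H_0$, so $h_1 H_0 = h_2 H_0$. This is the one step where the assumption $H_{\xi} \subset H_0$ is genuinely needed, and it is the main (and essentially only) obstacle in the argument.

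Part (ii) is then a straightforward consequence of (i). Given a connected set $B \subset \mathcal{O}$, let $D$ be the unique connected component of $\mathcal{O}$ containing $B$. By the injectivity established in (i), only one component $A$ of $H$ satisfies $p_{\xi}(A) = D$, and no other component of $H$ has image meeting $D$ (otherwise $\Phi$ would send two distinct components to $D$). Consequently $p_{\xi}^{-1}(D) = A$, and since $B \subset D$ we conclude $p_{\xi}^{-1}(B) \subset A$. The set $A$ is a coset $hH_0$ and thus connected, which is exactly the statement of (ii).
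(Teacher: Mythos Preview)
Your proof is correct and follows essentially the same strategy as the paper's: identify the components of $H$ with the $H_0$-cosets, show each coset maps onto a single component of $\mathcal{O}$, and use $H_\xi \subset H_0$ together with normality of $H_0$ for injectivity. The paper phrases the bijection as a free transitive action of $H/H_0$ on the set of components of $\mathcal{O}$, whereas you argue more directly using the open and closed properties of $p_\xi$ from \reftext{Lemma~\ref{lem:OrbitMapProperties}}, but the content is the same.
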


\begin{proof}
We start the proof of (i) by noting that $H_{0}$ is a normal subgroup of
$H$. Then the factor group $H/H_{0}$ acts on the set of connected components
of $\mathcal{O}$ in a canonical way. To see this, note that if
$C \subset \mathcal{O}$ is a connected component, then $H_{0}^{T} C$ is
a connected open set with open complement (the union of the remaining
$H_{0}^{T}$-orbits), which intersects $C$ nontrivially. This implies
$H_{0}^{T} C \subset C$ by connectedness of the left-hand side, and
$H_{0}^{T} C = C$ by the remaining properties of $H_{0}^{T} C$. As a consequence,
if $\mathcal{C}$ denotes the set of connected components of
$\mathcal{O}$, the map
$(H/H_{0}) \times \mathcal{C} \ni (h H_{0}, C) \mapsto h^{-T} C \in
\mathcal{C}$ is a well-defined action of the quotient group on
$\mathcal{C}$. Furthermore, it is clearly transitive, since the action
of $H$ on $\mathcal{O}$ is transitive.

Finally, the condition $H_{\xi }\subset H_{0}$ implies that the action
of $H/H_{0}$ is free. To see this, it suffices to show that the stabilizer
of $C = H_{0}^{T} \xi $ is trivial, since the action is transitive. For
any $h \in H$ satisfying $h^{-T} H_{0}^{T} C = C$ we get
$\xi \in h H_{0}^{T} \xi $, and thus $h^{-T} h_{0}^{-T} \xi = \xi $, with
suitably chosen $h_{0} \in H_{0}$. Hence
$h h_{0} \in H_{\xi }\subset H_{0}$, which implies $h \in H_{0}$. This
finishes the proof of (i).

For the proof of (ii), we first note that by assumption on $B$, $B$ is
contained in a connected component of $\mathcal{O}$. By part (i), the inverse
image of this connected component is a connected component
$A \subset H$.
\end{proof}

Without the condition $H_{\xi }\subset H_{0}$, the orbit map is not a quasi-isometry
or even a coarse map between the group $H$ and the associated dual orbit.

\begin{corollary}
\label{cor4.9}
Let $W \subset H$ be a relatively compact, symmetric unit neighborhood
with $W\subset H_{0}$. Let $\mathcal{Q}$ be an induced connected covering
of $\mathcal{O}$ by $H$. If $H_{\xi }\not \subset H_{0}$, then
$p_{\xi}$ is not a metrically proper map from $(H, d_{W})$ to
$(\mathcal{O}, d_{\mathcal{Q}})$.
\end{corollary}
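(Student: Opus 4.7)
The plan is to directly exhibit a bounded subset of $\mathcal{O}$ whose preimage under $p_\xi$ is unbounded in $(H,d_W)$, which by definition rules out metric properness. The natural candidate is the singleton $\{\xi\}$ itself: its $d_{\mathcal{Q}}$-diameter is $0$, so it is certainly bounded, while its preimage is precisely the isotropy group $H_\xi$.

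The key observation will be that under the assumption $H_\xi \not\subset H_0$, we can pick $h_1 \in H_\xi \setminus H_0$. Since the identity $e$ lies in $H_0$ and $h_1$ lies in a different connected component of $H$, \reftext{Lemma~\ref{lem:GroupMetricFiniteConnectedComponent}} gives $d_W(e,h_1) = \infty$ (here we use that $W \subset H_0$ is a symmetric relatively compact unit neighborhood). Thus the subset $\{e,h_1\} \subset H_\xi$ already has infinite $d_W$-diameter, so $H_\xi = p_\xi^{-1}(\{\xi\})$ is unbounded in $(H,d_W)$.

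Putting these two facts together, we produce a bounded set $A = \{\xi\} \subset \mathcal{O}$ with unbounded preimage $p_\xi^{-1}(A)$ in $(H,d_W)$, which by \reftext{Definition~\ref{defn3.2}} shows that $p_\xi$ fails to be metrically proper, and in particular fails to be a coarse map (and hence is not a quasi-isometry either).

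The argument is essentially immediate once one recognizes that in the generalized metric $d_W$, finiteness is equivalent to lying in the same connected component. There is no real technical obstacle; the only point that deserves a short comment is the remark that singletons are bounded in both metric spaces (trivially for $d_{\mathcal{Q}}$ as well, since $d_{\mathcal{Q}}(\xi,\xi)=0$), which is why $\{\xi\}$ is a legitimate witness to the failure of metric properness.
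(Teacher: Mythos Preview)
Your proof is correct and follows essentially the same approach as the paper's own proof: both exhibit the singleton $\{\xi\}$ as a bounded set whose preimage $H_\xi$ contains two elements in different connected components of $H$, and both invoke \reftext{Lemma~\ref{lem:GroupMetricFiniteConnectedComponent}} to conclude that these elements are at infinite $d_W$-distance. The only cosmetic difference is that the paper picks arbitrary $h_1,h_2\in H_\xi$ lying in different components, whereas you make the convenient choice $h_2=e$.
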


\begin{proof}
If $H_{\xi }\not \subset H_{0}$, there exist
$h_{1}, h_{2} \in H_{\xi}$ that are members of different connected components
in $H$. This implies
\begin{align*}
d_{\mathcal{Q}}(p_{\xi}(h_{1}), p_{\xi}(h_{2})) = 0 \quad \text{and}
\quad d_{W}(h_{1}, h_{2})=\infty
\end{align*}
because of $p_{\xi}(h_{1}) = p_{\xi}(h_{2})$ and \reftext{Lemma~\ref{lem:GroupMetricFiniteConnectedComponent}}. This means the inverse image
of the bounded set $A:=\{p_{\xi}(h_{1})\}$ in
$(\mathcal{O}, d_{\mathcal{Q}})$ under $p_{\xi}$ is not bounded in
$(H, d_{W})$ since $p^{{-1}}_{\xi}(A)\supset \{h_{1},h_{2}\}$.
\end{proof}

In the remaining part of this section, we show that the condition
$H_{\xi }\subset H_{0}$ ensures that the orbit map $p_{\xi}$ is indeed
a quasi-isometry for suitable $\xi \in \mathcal{O}$. As preparation for
this, we cite the next result.

\begin{lemma}[\cite{Voigtlaender2015PHD} Lemma 4.1.4.]%
\label{lem:FelixCompactSetLemma}
Let $K_{1}, K_{2} \subset \mathcal{O}$ be compact sets and define
\begin{align*}
L:=L(K_{1},K_{2}):= p_{\xi}^{{-1}}(K_{1}) \cdot H_{\xi} \cdot \left (p_{
\xi}^{{-1}}(K_{2})\right )^{{-1}}.
\end{align*}
Then $L\subset H$ is compact and for arbitrary $g,h\in H$ satisfying
\begin{equation*}
h^{{-T}}K_{1} \cap g^{{-T}}K_{2} \neq \emptyset ,%
\end{equation*}
we have $g\in hL$.
\end{lemma}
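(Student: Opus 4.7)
The proof naturally splits into the compactness assertion for $L$ and the containment $g \in hL$. For the first, my plan is to observe that each factor in the product defining $L$ is compact: the preimages $p_\xi^{-1}(K_1)$ and $p_\xi^{-1}(K_2)$ are compact because $p_\xi$ is proper by \reftext{Lemma~\ref{lem:OrbitMapProperties}}; the isotropy group $H_\xi$ is compact by the standing admissibility assumption on $H$; and inversion on $H$ is continuous, so $(p_\xi^{-1}(K_2))^{-1}$ is compact as well. Compactness of $L$ then follows from the continuity of multiplication in $H$.

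For the second assertion, the key idea is to exploit that the fibers of the orbit map $p_\xi$ are precisely the left cosets of the stabilizer $H_\xi$. Given $h,g \in H$ with $h^{-T} K_1 \cap g^{-T} K_2 \neq \emptyset$, pick $\eta$ in the intersection, together with $k_1 \in K_1$ and $k_2 \in K_2$ satisfying $h^{-T} k_1 = \eta = g^{-T} k_2$. Choose arbitrary lifts $h_i \in p_\xi^{-1}(k_i)$ for $i=1,2$. Expressing the dual action by $h \cdot \xi := h^{-T}\xi$, the equation $h \cdot k_1 = g \cdot k_2$ becomes $(g^{-1}h) \cdot k_1 = k_2$. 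Rewriting $k_i$ via the lifts as $h_i \cdot \xi$, I obtain $h_2^{-1} g^{-1} h h_1 \cdot \xi = \xi$, i.e., $h_2^{-1} g^{-1} h h_1 \in H_\xi$.

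The final step is simply to solve this identity for $h^{-1} g$: denoting the stabilizer element by $k$, I get $h^{-1} g = h_1 k^{-1} h_2^{-1}$. Since $h_1 \in p_\xi^{-1}(K_1)$, $k^{-1} \in H_\xi$, and $h_2^{-1} \in (p_\xi^{-1}(K_2))^{-1}$, this element lies in $L$, so $g \in hL$ as desired.

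I do not anticipate a genuine obstacle here; the only point requiring care is the bookkeeping for the dual action, which reverses the order of composition in a way that is easy to mishandle. The existence of the lifts $h_i$ is guaranteed by surjectivity of $p_\xi$ (\reftext{Lemma~\ref{lem:OrbitMapProperties}}), and no choice of $\xi$, well-spread family, or covering enters — the statement is purely a geometric lemma about orbit maps of admissible groups.
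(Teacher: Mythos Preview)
Your argument is correct. The paper does not supply its own proof of this lemma; it is quoted verbatim from \cite{Voigtlaender2015PHD} and used as a black box. Your reasoning---properness of $p_\xi$ and compactness of $H_\xi$ for the first claim, and a direct orbit--stabilizer computation for the second---is exactly the standard route and matches what one finds in the cited source.
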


We will need the following property of the set $L$ from the above lemma.

\begin{lemma}%
\label{lem:FelixCompactSetLemmaConnected}
If $H_{\xi }\subset H_{0}$ and $K_{1}, K_{2} \subset \mathcal{O}$ are compact
sets contained in the same connected component of $\mathcal{O}$, then
$L(K_{1}, K_{2}) \subset H_{0}$ with the set $L(K_{1}, K_{2})$ from \reftext{Lemma~\ref{lem:FelixCompactSetLemma}}.
\end{lemma}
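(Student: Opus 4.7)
The plan is to exploit the bijection between connected components furnished by Lemma~\ref{lem:conn_comps}(i), combined with the fact that the identity component $H_0$ is a normal subgroup of $H$ whose cosets are precisely the connected components of $H$. A typical element of $L(K_1,K_2)$ has the form $h_1 \, k \, h_2^{-1}$ with $h_1 \in p_\xi^{-1}(K_1)$, $h_2 \in p_\xi^{-1}(K_2)$ and $k \in H_\xi$. By assumption $k \in H_0$, so the task reduces to showing that $h_1 H_0 = h_2 H_0$, after which normality of $H_0$ finishes the argument.

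First I would fix a connected component $C \subset \mathcal{O}$ containing $K_1 \cup K_2$. By Lemma~\ref{lem:conn_comps}(i) the orbit map $p_\xi$ induces a bijection between the connected components of $H$ and those of $\mathcal{O}$; in particular, $p_\xi^{-1}(C)$ is a single connected component $A$ of $H$. Since $K_1, K_2 \subset C$, we get $p_\xi^{-1}(K_1) \cup p_\xi^{-1}(K_2) \subset A$, so both $h_1$ and $h_2$ lie in $A$. Because $H_0 \subset H$ is a normal subgroup whose cosets are exactly the connected components of $H$, we have $A = h_1 H_0 = h_2 H_0$, and consequently $h_1^{-1} h_2 \in H_0$.

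Next I would pass to the quotient homomorphism $\pi : H \to H/H_0$ to conclude. Since $k \in H_\xi \subset H_0$ we have $\pi(k) = e$, and since $h_1 H_0 = h_2 H_0$ we have $\pi(h_1) = \pi(h_2)$. Hence
\begin{equation*}
\pi(h_1 k h_2^{-1}) = \pi(h_1)\,\pi(k)\,\pi(h_2)^{-1} = e,
\end{equation*}
so $h_1 k h_2^{-1} \in H_0$. As $h_1, h_2$ and $k$ were arbitrary in the defining sets, this proves $L(K_1,K_2) \subset H_0$.

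I do not anticipate a real obstacle here: once the bijection in Lemma~\ref{lem:conn_comps}(i) is available, the rest is elementary topological group theory. The only subtlety worth emphasizing in the writeup is that the hypothesis $H_\xi \subset H_0$ enters twice --- once implicitly, as the standing hypothesis needed for Lemma~\ref{lem:conn_comps}(i), and once explicitly, to guarantee that the factor $k \in H_\xi$ lives in $H_0$.
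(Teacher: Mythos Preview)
Your proof is correct and follows essentially the same route as the paper: both arguments use Lemma~\ref{lem:conn_comps} to place $p_\xi^{-1}(K_1)$ and $p_\xi^{-1}(K_2)$ inside a single connected component $A$ of $H$, and then conclude $L(K_1,K_2)\subset H_0$. The only cosmetic difference is in the last step --- the paper observes that $A\cdot H_0\cdot A^{-1}$ is connected and contains $e$, whereas you pass to the quotient $H/H_0$ and compute $\pi(h_1 k h_2^{-1})=e$; these are two phrasings of the same fact.
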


\begin{proof}
If one of the sets $K_{1}, K_{2}$ is empty, the claim is obvious. Consider
the case of nonempty $K_{1}, K_{2}$ in the following. There is a connected
component $B\subset \mathcal{O}$ with $K_{1}, K_{2} \subset B$. By \reftext{Lemma~\ref{lem:conn_comps}}, there is a connected component $A\subset H$ such
that
\begin{align*}
p_{\xi}^{{-1}}(K_{1}) , p_{\xi}^{{-1}}(K_{2}) \subset p_{\xi}^{{-1}}(B)
\subset A.
\end{align*}
This implies
\begin{align*}
L(K_{1}, K_{2}) \subset A\cdot H_{\xi }\cdot A^{{-1}}\subset A \cdot H_{0}
\cdot A^{{-1}}=:M.
\end{align*}
Since $K_{1}, K_{2}$ are nonempty, the set $A$ is nonempty as well. Hence,
there is $a\in A$, which further implies
$e=a\cdot e \cdot a^{{-1}}\in M$. As $M$ is a connected set as product
of connected sets, and contains $e$, we infer $M\subset H_{0}$.
\end{proof}

In the following statements and arguments, we will occasionally make the
assumption that $\xi \in Q$, where the set $Q$ is used in the construction
of an induced connected covering $(h_{i}^{{-T}}Q)_{i\in I}$ of
$\mathcal{O}$ by $H$. Observe that this can be achieved by relabeling the
covering, e.g. by picking $i_{0} \in I$ with
$\xi \in h_{i_{0}}^{{-T}}Q =: Q'$, and considering the system
$(g_{i}^{{-T}}Q')_{i \in I} = (h_{i}^{{-T}}Q)_{i \in I}$, where
$g_{i} = h_{i} h_{i_{0}}^{-1}$. Here it is easy to verify that
$(g_{i})_{i \in I}$ is well-spread if $(h_{i})_{i \in I}$ is.

The following inequality is the first step towards showing that the orbit map is a quasi-isometry.

\begin{lemma}%
\label{lem:OrbitMapQuasiIsoUntereUngleichung}
Assume $H_{\xi }\subset H_{0}$. Let $W \subset H$ be a relatively compact,
symmetric unit neighborhood with $W\subset H_{0}$. Let
$\mathcal{Q}= (h_{i}^{{-T}}Q)_{i\in I}$ be an induced connected covering
of $\mathcal{O}$ by $H$ with $\xi \in Q$. Then the following inequality
holds
\begin{align*}
d_{W}(g,h) \leq L d_{\mathcal{Q}}(p_{\xi}(g), p_{\xi}(h)) + C
\end{align*}
for suitable $L,C>0$ for all $g,h\in H$.
\end{lemma}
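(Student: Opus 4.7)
The plan is to chain together the covering-based estimates with a group-theoretic translation via \reftext{Lemma~\ref{lem:FelixCompactSetLemma}}, and then reduce everything to covering a fixed compact subset of $H_0$ by a power of $W$. First, if $g$ and $h$ lie in different connected components of $H$, then \reftext{Lemma~\ref{lem:conn_comps}}(i) (using the standing assumption $H_{\xi}\subset H_0$) implies that $p_{\xi}(g)$ and $p_{\xi}(h)$ lie in different components of $\mathcal{O}$, so by \reftext{Lemma~\ref{lem3.19}} and \reftext{Lemma~\ref{lem:GroupMetricFiniteConnectedComponent}} both sides are $\infty$ and the inequality is vacuously true. So I may assume $g,h$ lie in a common connected component, and write $m:=d_{\mathcal{Q}}(p_{\xi}(g),p_{\xi}(h))<\infty$.

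For $m\ge 1$, pick a $\mathcal{Q}$-chain $h_{i_1}^{-T}Q,\dots,h_{i_m}^{-T}Q$ realizing this distance, with $p_{\xi}(g)\in h_{i_1}^{-T}Q$ and $p_{\xi}(h)\in h_{i_m}^{-T}Q$. Since $\xi\in Q$, one has $p_{\xi}(g)\in g^{-T}Q$ and $p_{\xi}(h)\in h^{-T}Q$, so prepending $g^{-T}Q$ and appending $h^{-T}Q$ yields $m+1$ consecutive intersections of translates of $Q$. Applying \reftext{Lemma~\ref{lem:FelixCompactSetLemma}} with $K_1=K_2=\overline{Q}$ and $L_0:=L(\overline{Q},\overline{Q})\subset H$, each intersection gives $h_{i_1}\in gL_0$, $h_{i_{k+1}}\in h_{i_k}L_0$ for $k=1,\dots,m-1$, and $h\in h_{i_m}L_0$. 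Telescoping yields
\begin{equation*}
g^{-1}h \in L_0^{m+1}.
\end{equation*}

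Now comes the key point where the connectedness hypothesis is used: because the chain stays inside the connected component of $\mathcal{O}$ containing $\xi$, and $Q$ is connected with $\xi\in Q$, the compact set $\overline{Q}$ is contained in that component, and \reftext{Lemma~\ref{lem:FelixCompactSetLemmaConnected}} gives $L_0\subset H_0$. Since $W\subset H_0$ is a symmetric unit neighborhood, $H_0=\bigcup_{n\in\N}W^n$ is an increasing open cover, so by compactness of $L_0$ there exists $n_0\in\N$ with $L_0\subset W^{n_0}$. Consequently $g^{-1}h\in W^{n_0(m+1)}$, which yields $d_W(g,h)\le n_0\cdot m+n_0$. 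For the degenerate case $m=0$ with $g\neq h$, one has $p_{\xi}(g)=p_{\xi}(h)$, hence $g^{-1}h\in H_{\xi}$; since $H_{\xi}\subset H_0$ is compact, the same covering argument gives $H_{\xi}\subset W^{n_1}$ for some $n_1$, so $d_W(g,h)\le n_1$. Taking $L:=n_0$ and $C:=\max(n_0,n_1)$ establishes the claimed inequality uniformly in $g,h\in H$.

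I do not anticipate a serious obstacle here: the proof is essentially a translation, via \reftext{Lemma~\ref{lem:FelixCompactSetLemma}}, of $\mathcal{Q}$-chains into products of group elements in a fixed compact set. The only subtle points are (a) making sure the compact set $L_0$ genuinely sits inside $H_0$, which is precisely what \reftext{Lemma~\ref{lem:FelixCompactSetLemmaConnected}} provides under the hypothesis $H_{\xi}\subset H_0$, and (b) attaching the endpoints $g$ and $h$ to the chain via the assumption $\xi\in Q$, which is harmless by the relabeling remark preceding the lemma.
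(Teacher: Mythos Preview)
Your proof is correct and follows essentially the same route as the paper's: translate a $\mathcal{Q}$-chain into a product of elements lying in a fixed compact subset of $H_0$ via \reftext{Lemma~\ref{lem:FelixCompactSetLemma}}, then cover that compact set by a power of $W$. Your handling of the endpoints is in fact slightly cleaner: by using $\xi\in Q$ to note $p_\xi(g)\in g^{-T}Q$, you can work with the single set $L_0=L(\overline{Q},\overline{Q})$ throughout, whereas the paper uses the three sets $L(\{\xi\},\overline{Q})$, $L(\overline{Q},\overline{Q})$, $L(\overline{Q},\{\xi\})$ and takes their union.

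One small correction: your justification for $L_0\subset H_0$ invokes that ``the chain stays inside the connected component of $\mathcal{O}$ containing $\xi$,'' but this need not hold when $g\notin H_0$ (the chain then lies in the component containing $p_\xi(g)$, which by \reftext{Lemma~\ref{lem:conn_comps}}(i) is a different one). Fortunately this claim is irrelevant: \reftext{Lemma~\ref{lem:FelixCompactSetLemmaConnected}} only requires $K_1=K_2=\overline{Q}$ to lie in a common component of $\mathcal{O}$, which is automatic since $\overline{Q}$ is connected. So simply drop the clause about the chain and the argument goes through.
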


\begin{proof}
Let $g,h \in H$ be arbitrary. If
$d_{\mathcal{Q}}(p_{\xi}(g), p_{\xi}(h)) = \infty $, there is nothing to
show. If $d_{\mathcal{Q}}(p_{\xi}(g), p_{\xi}(h)) = 0$, then
\begin{equation*}
p_{\xi}(g) = p_{\xi}(h)%
\end{equation*}
and $g^{{-1}}h \in H_{\xi}$. The set $H_{\xi}\subset H_{0}$ is compact,
according to the admissibility of $H$, so that there exists a
$C_{1}\in \mathbb{N}$ with $H_{\xi}\subset W^{C_{1}}$ (cf. the proof of
\reftext{Lemma~\ref{lem:well_spread_coarse_equiv}} for a similar argument), which
implies $d_{W}(g,h) \leq C_{1}$.

If $d_{\mathcal{Q}}(p_{\xi}(g), p_{\xi}(h)) = m\in \mathbb{N}$, there exists
a $\mathcal{Q}$-chain from $p_{\xi}(g)$ to $p_{\xi}(h)$, which means there
exist $h_{i_{1}}, \ldots , h_{i_{m}}$ in $(h_{i})_{i\in I}$ such that
$p_{\xi}(g)\in h_{i_{1}}^{{-T}}Q$, $p_{\xi}(h)\in h_{i_{m}}^{{-T}}Q$ and
\begin{align*}
h_{i_{j}}^{{-T}}Q \cap h_{i_{j+1}}^{{-T}}Q \neq \emptyset
\end{align*}
for all $j\in \{1, \ldots , m-1\}$. According to \reftext{Lemma~\ref{lem:FelixCompactSetLemma}}, this implies
\begin{align*}
g^{{-1}}h_{i_{1}} &\in L(\{\xi \}, \overline{Q}),
\\
h_{i_{j}}^{{-1}}h_{i_{j+1}} &\in L(\overline{Q}, \overline{Q}) \quad
\text{for } j\in \{1,\ldots m-1\}
\\
h_{i_{m}}^{{-1}}h &\in L(\overline{Q}, \{\xi \}),
\end{align*}
which allows us to deduce
\begin{align*}
&g^{{-1}}h
\\
&= g^{{-1}}h_{i_{1}} \left (\prod _{j=1}^{m-1} h_{i_{j}}^{{-1}}h_{i_{j+1}}
\right ) h_{i_{m}}^{{-1}}h
\\
&\in L(\{\xi \}, \overline{Q})L(\overline{Q}, \overline{Q})^{m-1}L(
\overline{Q}, \{\xi \}).
\end{align*}
Because $\xi \in \overline{Q}$ and $\overline{Q}$ is a connected subset
of $\mathcal{O}$, \reftext{Lemma~\ref{lem:FelixCompactSetLemmaConnected}} implies
\begin{align*}
S:= L(\{\xi \}, \overline{Q}) \cup L(\overline{Q}, \overline{Q})
\cup L(\overline{Q}, \{\xi \}) \subset H_{0}
\end{align*}
and $S$ is compact, according to \reftext{Lemma~\ref{lem:FelixCompactSetLemma}}.
Connectedness of $H_{0}$ and $W \subset H_{0}$ symmetric, open imply that
$H_{0} = \bigcup _{n \in \mathbb{N}} W^{n}$, hence compactness of
$S$ implies $S\subset W^{L_{1}}$ for suitable $L_{1}$, which implies
\begin{align*}
g^{{-1}}h \in S^{m+1} \subset W^{L_{1} (m+1)}.
\end{align*}
We conclude
\begin{align*}
d_{W}(g, h) \leq L_{1}(m+1) = L_{1} d_{\mathcal{Q}}(p_{\xi}(g), p_{
\xi}(h)) + L_{1}.
\end{align*}

In summary, for $L:=L_{1}$ and $C:=L_{1}+C_{1}$, we have
\begin{align*}
d_{W}(g,h) \leq L d_{\mathcal{Q}}(p_{\xi}(g), p_{\xi}(h)) + C
\end{align*}
for all $g,h\in H$.
\end{proof}

In the next result, we show the remaining inequality for a special case.

\begin{lemma}%
\label{lem:SpecialCaseOne}
Let $W \subset H$ be a relatively compact, symmetric unit neighborhood
with $W\subset H_{0}$. Let $\mathcal{Q}= (h_{i}^{{-T}}Q)_{i\in I}$ be an
induced connected covering of $\mathcal{O}$ by $H$ with $\xi \in Q$ for
some well-spread family $(h_{i})_{i\in I}$ in $H$ and open, relatively
compact $Q \subset \mathcal{O}$. Then there exists a constant $C>0$ such
that for all $g,h\in H$ with $d_{W}(g,h)\leq 1$, the inequality
\begin{align*}
d_{\mathcal{Q}}(p_{\xi}(g), p_{\xi}(h)) \leq C
\end{align*}
holds.
\end{lemma}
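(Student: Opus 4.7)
The plan rests on Lemma~\ref{lem:InducedCoveringNormCondition} combined with a nerve-type connectedness argument. First I would reduce to the case that $W$ is additionally open and connected. Indeed, since $H_0$ is connected and locally compact, one has $H_0 = \bigcup_n V_0^n$ for any relatively compact open symmetric unit neighborhood $V_0 \subset H_0$; compactness of $\overline{W}$ then yields $W \subset V_0^n$ for some $n$, and $V := V_0^n$ is open, connected, symmetric, relatively compact, and contained in $H_0$. The inclusion $W \subset V$ implies $d_V \le d_W$, so that a bound of the desired form valid under the hypothesis $d_V(g,h) \le 1$ immediately yields the corresponding bound under $d_W(g,h) \le 1$. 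Hence without loss of generality we may assume $W$ itself is open and connected.

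The case $h = g$ being trivial, I write $h = gw$ with $w \in W$. Set $K := \overline{W^{-T} \xi}$. Since $W$ is connected and $\overline{W}$ is compact, and the orbit map is continuous, $K$ is a compact connected subset of $\mathcal{O}$; it contains both $\xi = e^{-T} \xi$ and $w^{-T} \xi$. Consequently $g^{-T} K$ is a compact connected subset of $\mathcal{O}$ containing $p_\xi(g)$ and $p_\xi(h)$.

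Next I would apply Lemma~\ref{lem:InducedCoveringNormCondition} with $K_1 = K$ and $K_2 = \overline{Q}$ to obtain a constant $C_2 > 0$, independent of $g$, with
\[
\bigl| \{ i \in I : g^{-T} K \cap h_i^{-T} \overline{Q} \neq \emptyset \} \bigr| \le C_2.
\]
In particular the subset $I_g^{*} := \{ i \in I : g^{-T} K \cap h_i^{-T} Q \neq \emptyset \}$ satisfies $|I_g^{*}| \le C_2$, and by the covering property of $\mathcal{Q}$, the family $(h_i^{-T} Q)_{i \in I_g^{*}}$ is an open cover of the connected set $g^{-T} K$. A standard nerve-type argument then applies: for a connected space covered by finitely many open sets, the graph whose vertices are these sets and whose edges connect pairs with non-empty intersection is itself connected. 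Consequently the members of $(h_i^{-T} Q)_{i \in I_g^{*}}$ containing $p_\xi(g)$ and $p_\xi(h)$, respectively, are joined by a $\mathcal{Q}$-chain of length at most $C_2$, yielding $d_{\mathcal{Q}}(p_\xi(g), p_\xi(h)) \le C_2$.

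The main technical obstacle is securing a bound on the number of covering sets needed that is uniform in $g$; this uniformity is precisely the content of Lemma~\ref{lem:InducedCoveringNormCondition}, and once it is available, the remaining connectedness step is elementary, provided one has carried out the preliminary reduction to a connected $W$.
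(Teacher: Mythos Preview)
Your argument is correct and follows essentially the same route as the paper: enlarge $W$ to a connected relatively compact neighborhood, note that $g^{-T}\overline{p_\xi(W)}$ is a compact connected set containing both $p_\xi(g)$ and $p_\xi(h)$, invoke Lemma~\ref{lem:InducedCoveringNormCondition} to bound (uniformly in $g$) the number of covering sets meeting it, and then exploit connectedness to extract a $\mathcal{Q}$-chain of controlled length. The only difference is cosmetic: where you cite the standard nerve lemma (a connected space covered by finitely many open sets has connected intersection graph), the paper spells out the same fact via the open--closed argument for the set $C_x$ already used in the proof of Theorem~\ref{thm:WeakEquivQuasiIso}.
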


\begin{proof}
If $g,h\in H$ with $d_{W}(g,h)\leq 1$, then $h=gw$ for some $w\in W$ (if
$d_{W}(g,h)=0$, then $w=e$). The set $W\subset H_{0}$ is relatively compact
therefore we can choose a relatively compact, connected set
$V\subset H_{0}$ with $W\subset V$ (this is possible by choosing a relatively
compact, connected unit neighborhood $U\subset H$ and using that
$W\subset \bigcup _{m=1}^{\infty }(U^{\circ})^{m} $, then $V:=U^{m}$, for
a suitable $m$, is a valid choice). We have
\begin{equation*}
\overline{p_{\xi}(V)} \subset p_{\xi}(\overline{V}) \subset
\mathcal{O}%
\end{equation*}
because the orbit map is closed according to \reftext{Lemma~\ref{lem:OrbitMapProperties}}, which shows that
$K_{1}:=\overline{p_{\xi}(V)}$ is a compact and connected subset of
$\mathcal{O}$. If we set $K_{2}:=\overline{Q}$, then \reftext{Lemma~\ref{lem:InducedCoveringNormCondition}} implies that the set
\begin{align*}
I_{g}(K_{1}, K_{2}) :=
\Set{i\in I | g^{{-T}}K_{1} \cap h_{i}^{{-T}}K_{2} \neq \emptyset}
\end{align*}
has at most $C_{2} = C_{2}(K_{1}, K_{2}, (h_{i})_{i\in I})$ elements. Crucial
for this proof is that $C_{2}$ does not depend on $g$. For arbitrary
$x \in g^{{-T}}K_{1}=:K$, consider the set
\begin{align*}
C_{x}:=
\Set{ z\in K|
\begin{array}{l}
x,z \text{ are connected by a }\mathcal{Q}\text{-chain }\\
(Q_{\ell})_{\ell =1}^{m} \text{ of length } m\leq C_{2} \mbox{ with } Q_{\ell }\cap K \neq \emptyset \\
\text{and } Q_{\ell }\neq Q_{\ell '} \text{ for } \ell \neq \ell '
\end{array}
}.
\end{align*}
As in the proof of \reftext{Theorem~\ref{thm:WeakEquivQuasiIso}}, we see that
$C_{x}=K$ because $K$ is connected as well. Since $x\in K$ was arbitrary,
we have
\begin{align*}
d_{\mathcal{Q}}(x,y) \leq C_{2}
\end{align*}
for all $x,y\in K$. In particular, it holds
\begin{align*}
p_{\xi}(g) = g^{{-T}}\xi \in g^{{-T}}W^{{-T}}\xi \subset g^{{-T}}
\overline{p_{\xi}(V)} = g^{{-T}}K_{1} = K
\end{align*}
and
\begin{align*}
p_{\xi}(gw)
= g^{{-T}}w^{{-T}}\xi \in g^{{-T}}W^{{-T}}\xi \subset g^{{-T}}
\overline{p_{\xi}(V)} = g^{{-T}}K_{1} = K,
\end{align*}
which implies
\begin{align*}
d_{\mathcal{Q}}(p_{\xi}(g), p_{\xi}(h)) \leq C_{2}.
\end{align*}
As $C_{2}$ is independent of $g, h$ with $d_{W}(g,h)\leq 1$, this concludes
the proof.
\end{proof}

This special case implies the general case.

\begin{corollary}%
\label{cor:OrbitMapQuasiIsoUpperInequality}
Let $W \subset H$ be a relatively compact, symmetric unit neighborhood
with $W\subset H_{0}$. Let $\mathcal{Q}= (h_{i}^{{-T}}Q)_{i\in I}$ be an
induced connected covering of $\mathcal{O}$ by $H$ with $\xi \in Q$, for
some open, relatively compact $Q \subset \mathcal{O}$. There exists a constant
$C>0$ such that for all $g,h\in H$, the inequality
\begin{align*}
d_{\mathcal{Q}}(p_{\xi}(g), p_{\xi}(h)) \leq C d_{W}(g,h)
\end{align*}
holds.
\end{corollary}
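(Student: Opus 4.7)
The plan is to reduce the general case to the special case established in Lemma \ref{lem:SpecialCaseOne}, by walking from $g$ to $h$ one letter of $W$ at a time and invoking the triangle inequality for $d_{\mathcal{Q}}$.

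First I would dispose of the degenerate cases. If $d_W(g,h)=\infty$, the inequality is vacuous under the usual conventions about $\infty$. If $d_W(g,h)=0$ then $g=h$, hence $p_\xi(g)=p_\xi(h)$ and both sides vanish. So assume $d_W(g,h)=m\in\mathbb{N}$. By definition of the word metric, there exist $w_1,\dots,w_m\in W$ such that $g^{-1}h=w_1\cdots w_m$. Set $g_0:=g$ and $g_k:=g\,w_1\cdots w_k$ for $k=1,\dots,m$, so that $g_m=h$ and
\begin{equation*}
g_{k-1}^{-1}g_k = w_k \in W,
\end{equation*}
which means $d_W(g_{k-1},g_k)\le 1$ for each $k\in\{1,\dots,m\}$.

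Now I would apply Lemma \ref{lem:SpecialCaseOne} to each consecutive pair. That lemma produces a constant $C>0$ (depending on $W$, $\mathcal{Q}$ and $\xi$, but not on the particular pair) such that $d_W(g',h')\le 1$ implies $d_{\mathcal{Q}}(p_\xi(g'),p_\xi(h'))\le C$. Hence $d_{\mathcal{Q}}(p_\xi(g_{k-1}),p_\xi(g_k))\le C$ for each $k$. Applying the triangle inequality for $d_{\mathcal{Q}}$ along the chain $p_\xi(g_0),p_\xi(g_1),\dots,p_\xi(g_m)$ yields
\begin{equation*}
d_{\mathcal{Q}}\!\left(p_\xi(g),p_\xi(h)\right)
\le \sum_{k=1}^m d_{\mathcal{Q}}\!\left(p_\xi(g_{k-1}),p_\xi(g_k)\right)
\le C\,m = C\,d_W(g,h),
\end{equation*}
which is the desired inequality with the constant inherited from Lemma \ref{lem:SpecialCaseOne}.

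There is no real obstacle here; the corollary is a routine telescoping argument, and the genuine content (the fact that one step in the word metric corresponds to boundedly many steps in the covering metric, uniformly in the base point $g$) is precisely what Lemma \ref{lem:SpecialCaseOne} delivers. The only point worth double-checking is that the constant $C$ from Lemma \ref{lem:SpecialCaseOne} is independent of the pair $(g_{k-1},g_k)$, which is guaranteed because that lemma's constant $C_2$ from Lemma \ref{lem:InducedCoveringNormCondition} depends only on the fixed compact sets $K_1,K_2$ and on the well-spread family $(h_i)_{i\in I}$, not on the specific $g$ under consideration.
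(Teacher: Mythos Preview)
Your proof is correct and follows essentially the same approach as the paper: handle the degenerate cases, then for $d_W(g,h)=m$ write $h=gw_1\cdots w_m$, define the intermediate points $g_k=gw_1\cdots w_k$, apply Lemma~\ref{lem:SpecialCaseOne} to each consecutive pair, and sum via the triangle inequality. The only cosmetic difference is that the paper phrases $d_W(g_{k-1},g_k)\le 1$ via left invariance of $d_W$, whereas you observe directly that $g_{k-1}^{-1}g_k=w_k\in W$.
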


\begin{proof}
Let $g,h$ be arbitrary elements in $H$. If $d_{W}(g,h)=\infty $, there
is nothing to show. If $d_{W}(g,h)=0$, then $g=h$ and also the left side
of the inequality vanishes. If $d_{W}(g,h)=m\in \mathbb{N}$, we can write
\begin{align*}
h=g\prod _{k=1}^{m} w_{k}
\end{align*}
for some $w_{k} \in W$. Define $h_{n}:= g\prod _{k=1}^{n} w_{k}$ for
$n\in \{0, \ldots , m\}$. This entails
\begin{align*}
d_{W}(h_{n} , h_{n+1}) = d_{W}(e, w_{n+1}) \leq 1
\end{align*}
for all $n\in \{0, \ldots , m-1\}$, where we used the left invariance of
the metric $d_{W}$. Repeated application of the triangle inequality and
\reftext{Lemma~\ref{lem:SpecialCaseOne}} yield
\begin{align*}
d_{\mathcal{Q}}(p_{\xi}(g), p_{\xi}(h)) &\leq \sum _{n=0}^{m-1}d_{
\mathcal{Q}}(p_{\xi}(h_{n}), p_{\xi}(h_{n+1}))
\\
& \leq C m
\\
&= C d_{W}(g,h).
\end{align*}
\end{proof}

\begin{remark}
\label{rem4.15}
Note that we did not assume $H_{\xi }\subset H_{0}$ in \reftext{Lemma~\ref{lem:SpecialCaseOne}} and \reftext{Corollary~\ref{cor:OrbitMapQuasiIsoUpperInequality}}, which means that even without
this assumption, the orbit map is uniformly bornologous.
\end{remark}

The previous results combine to give the following important observation.

\begin{theorem}%
\label{thm:OrbitMapQuasiIsometry}
Assume $H_{\xi }\subset H_{0}$. Let $W \subset H$ be a relatively compact,
symmetric unit neighborhood with $W\subset H_{0}$. Furthermore, let
$\mathcal{Q}= (h_{i}^{{-T}}Q)_{i\in I}$ be an induced connected covering
of $\mathcal{O}$ by $H$ with $\xi \in Q$, for some open, relatively compact
$Q \subset \mathcal{O}$. Then
\begin{equation*}
p_{\xi}:(H, d_{W}) \to (\mathcal{O}, d_{\mathcal{Q}}), h\mapsto h^{{-T}}
\xi
\end{equation*}
is a quasi-isometry. In particular $(H, d_{W})$ and
$(\mathcal{O}, d_{\mathcal{Q}})$ are coarsely equivalent.
\end{theorem}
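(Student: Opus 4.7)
The plan is to assemble the theorem directly from the two estimates already available in the text, together with the topological properties of the orbit map. Concretely, I would first verify that the definition of quasi-isometry (Definition \ref{def:quasi-isometry}) reduces, in our situation, to two ingredients: a two-sided comparison of the metrics $d_W$ and $d_\mathcal{Q}$ along $p_\xi$, and coarse surjectivity.

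For the two-sided comparison, the lower bound
\begin{align*}
d_W(g,h) \le L_1 \, d_\mathcal{Q}(p_\xi(g), p_\xi(h)) + C_1
\end{align*}
is exactly the content of Lemma \ref{lem:OrbitMapQuasiIsoUntereUngleichung}, and this is the step where the hypothesis $H_\xi \subset H_0$ is used (ensuring via Lemma \ref{lem:FelixCompactSetLemmaConnected} that the relevant compact sets arising from chains along $\overline{Q}$ land in $H_0$, where $W$ generates the whole component). The upper bound
\begin{align*}
d_\mathcal{Q}(p_\xi(g), p_\xi(h)) \le C_2 \, d_W(g,h)
\end{align*}
is exactly Corollary \ref{cor:OrbitMapQuasiIsoUpperInequality}. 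Setting $L := \max(L_1, C_2)$ and $C := C_1$, one reads off the quasi-isometric embedding inequalities of Definition \ref{def:quasi-isometry}(i).

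For coarse surjectivity, Lemma \ref{lem:OrbitMapProperties} tells us that $p_\xi$ is actually surjective, which by Remark \ref{rem3.7} implies coarse surjectivity (one may take $K = 0$). Hence $p_\xi$ satisfies both conditions of Definition \ref{def:quasi-isometry} and is a quasi-isometry.

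Finally, the ``in particular'' statement is immediate: by Theorem \ref{thm:QuasiIsoCoarseEqiuv}, the existence of a quasi-isometry between $(H, d_W)$ and $(\mathcal{O}, d_\mathcal{Q})$ already yields that these spaces are coarsely equivalent. There is essentially no obstacle left in this final step; all the hard work has been done in Lemma \ref{lem:OrbitMapQuasiIsoUntereUngleichung} and Corollary \ref{cor:OrbitMapQuasiIsoUpperInequality}, where the interplay between the isotropy condition $H_\xi \subset H_0$, the connectedness of $\overline{Q}$, and Lemma \ref{lem:InducedCoveringNormCondition} is the real content. The main obstacle one should keep in mind, though already handled by Remark \ref{rem:metric_inf_ok}, is the possibility of both metrics taking the value $\infty$; by Lemma \ref{lem:conn_comps}(i), $p_\xi$ induces a bijection of connected components, so the $\infty$-fibers on both sides match up and the quasi-isometry constants can be taken uniformly over the finitely many components.
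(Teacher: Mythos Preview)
Your proof is correct and follows exactly the paper's approach: combine Lemma~\ref{lem:OrbitMapQuasiIsoUntereUngleichung} and Corollary~\ref{cor:OrbitMapQuasiIsoUpperInequality} to get the quasi-isometric embedding, use surjectivity of $p_\xi$ for coarse surjectivity, and conclude coarse equivalence via Theorem~\ref{thm:QuasiIsoCoarseEqiuv}. Your additional remarks on where the hypothesis $H_\xi \subset H_0$ enters and on the handling of the value $\infty$ across components are accurate elaborations, though the paper's proof dispenses with them since they are already absorbed into the cited lemmas and Remark~\ref{rem:metric_inf_ok}.
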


\begin{proof}
\reftext{Lemma~\ref{lem:OrbitMapQuasiIsoUntereUngleichung}} and \reftext{Corollary~\ref{cor:OrbitMapQuasiIsoUpperInequality}} show that
\begin{equation*}
p_{\xi}:(H, d_{W}) \to (\mathcal{O}, d_{\mathcal{Q}})%
\end{equation*}
is a quasi-isometric embedding. Since it is surjective, it is a quasi-isometry.
The coarse equivalence follows with \reftext{Theorem~\ref{thm:QuasiIsoCoarseEqiuv}}.
\end{proof}

\subsection{Characterizing coorbit equivalence}
\label{subsect:cc_eq}

Now we can state the central theorem characterizing whether two admissible
groups with the same dual orbit $\mathcal{O}$ induce weakly equivalent
coverings of $\mathcal{O}$, without the need to explicitly determine such
coverings.

The basic idea underlying the theorem is conveyed by the following commutative
diagram, and an appeal to Theorems \ref{thm:WeakEquivQuasiIso} and \ref{thm:OrbitMapQuasiIsometry}. 
\begin{center}
\begin{tikzcd}
(\mathcal{O}, d_{\mathcal{Q}}) \arrow{r}{\mathrm{id}_{\calQ}^{\calP}} & (
\mathcal{O}, d_{\mathcal{P}})
\arrow{d}{\left (p_{\xi _{2}}^{H_{2}}\right )^{\inv}}
\\
(H_{1}, d_{W}) \arrow{r}{\phi} \arrow{u}{p_{\xi _{1}}^{H_{1}}} & (H_{2},
d_{V})
\end{tikzcd}
\end{center}

\begin{theorem}%
\label{thm:MainTheoremEquivalenceOfGroups}
Let $H_{1}, H_{2} \subset \mathrm{GL}(\mathbb{R}^{d})$ be two admissible
dilation groups with dual orbit $\mathcal{O}= H_{1}^{-T} \xi _{1}$ and
$\mathcal{O}' = H_{2}^{-T} \xi _{2}$. Assume that the isotropy groups are
contained in the respective connected components of $H_{1}$ and
$H_{2}$. Let $W \subset (H_{1})_{0},V \subset (H_{2})_{0}$ denote relatively
compact, symmetric neighborhoods, let
$\mathcal{Q} = (h_{i}^{-T} Q)_{i \in I}$ and
$\mathcal{P} = (g_{j}^{-T} P)_{j \in J}$ denote induced, connected coverings
of $\mathcal{O}$ resp. $\mathcal{O}'$, for suitable choices of well-spread
families $(h_{i})_{i \in I} \subset H_{1}$,
$(g_{j})_{j \in J} \subset H_{2}$.

Denote the orbit maps by
\begin{align*}
p_{\xi _{1}}^{H_{1}} : (H_{1}, d_{W}) \to (\mathcal{O}, d_{
\mathcal{Q}}), h \mapsto h^{{-T}}\xi _{1}
\end{align*}
and
\begin{align*}
p_{
\xi _{2}}^{H_{2}} : (H_{2}, d_{V}) \to (\mathcal{O}, d_{\mathcal{P}}),
h \mapsto h^{{-T}}\xi _{2}.
\end{align*}
Let $\left (p_{\xi _{2}}^{H_{2}}\right )^{{-1}}$ denote an arbitrary right
inverse of $p_{\xi _{2}}^{H_{2}}$ (which exists since
$p_{\xi _{2}}^{H_{2}}$ is surjective). Then the following statements are
equivalent:
\begin{enumerate}[iii)]
\item[i)] $H_{1}$ and $H_{2}$ are coorbit equivalent.
\item[ii)] $\mathcal{O} = \mathcal{O}'$, and the map
\begin{equation*}
\phi := \left (p_{\xi _{2}}^{H_{2}}\right )^{{-1}}\circ p_{\xi _{1}}^{H_{1}}
: (H_{1}, d_{W}) \to (H_{2}, d_{V})%
\end{equation*}
is a coarse equivalence.
\item[iii)] $\mathcal{O} = \mathcal{O}'$, and the map
\begin{equation*}
\phi := \left (p_{\xi _{2}}^{H_{2}}\right )^{{-1}}\circ p_{\xi _{1}}^{H_{1}}
: (H_{1}, d_{W}) \to (H_{2}, d_{V})%
\end{equation*}
is a quasi-isometry.
\end{enumerate}
\end{theorem}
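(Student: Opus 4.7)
The plan is to chain together the characterizations already established in the paper. Coorbit equivalence corresponds via \reftext{Theorem~\ref{thm:coorbit_equiv_dual_orbits}} to the equality of dual orbits together with weak equivalence of the induced coverings; this in turn corresponds via \reftext{Theorem~\ref{thm:WeakEquivQuasiIso}} to the identity map between the two covering metrics on $\mathcal{O}$ being a quasi-isometry, equivalently a coarse equivalence. The commutative diagram displayed just before the statement then transfers this property on the orbit level to the analogous property of $\phi$ on the group level, using \reftext{Theorem~\ref{thm:OrbitMapQuasiIsometry}} to quasi-isometrically identify each dilation group with its dual orbit.

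First, I would invoke \reftext{Theorem~\ref{thm:coorbit_equiv_dual_orbits}} to rephrase (i) as the conjunction of $\mathcal{O} = \mathcal{O}'$ and the weak equivalence of $\mathcal{Q}$ and $\mathcal{P}$. As (ii) and (iii) both explicitly include $\mathcal{O} = \mathcal{O}'$, and as without this equality $\phi$ is not even well defined (the range of $p_{\xi_{1}}^{H_{1}}$ would not lie in the domain of $(p_{\xi_{2}}^{H_{2}})^{\inv}$), it is harmless to assume $\mathcal{O} = \mathcal{O}'$ in what follows. Under this assumption, \reftext{Theorem~\ref{thm:WeakEquivQuasiIso}} turns weak equivalence of $\mathcal{Q}$ and $\mathcal{P}$ into the statement that the identity $\mathrm{id}_{\mathcal{O}} : (\mathcal{O}, d_{\mathcal{Q}}) \to (\mathcal{O}, d_{\mathcal{P}})$ is a quasi-isometry.

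Next, I would use the diagram to translate between $\mathrm{id}_{\mathcal{O}}$ and $\phi$. By \reftext{Theorem~\ref{thm:OrbitMapQuasiIsometry}}, both orbit maps $p_{\xi_{1}}^{H_{1}}$ and $p_{\xi_{2}}^{H_{2}}$ are quasi-isometries, and \reftext{Lemma~\ref{lem:QuasiIsoHasCoaInv}} implies that any coarse right inverse $(p_{\xi_{2}}^{H_{2}})^{\inv}$ is itself a quasi-isometry. Since compositions of quasi-isometries are quasi-isometries, if $\mathrm{id}_{\mathcal{O}}$ is a quasi-isometry then so is $\phi = (p_{\xi_{2}}^{H_{2}})^{\inv} \circ \mathrm{id}_{\mathcal{O}} \circ p_{\xi_{1}}^{H_{1}}$ (interpreting the middle factor trivially, since $\phi$ is defined as $(p_{\xi_{2}}^{H_{2}})^{\inv} \circ p_{\xi_{1}}^{H_{1}}$). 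For the converse, choose any coarse right inverse $q$ of $p_{\xi_{1}}^{H_{1}}$; then $p_{\xi_{2}}^{H_{2}} \circ \phi \circ q$ is a composition of three quasi-isometries, hence a quasi-isometry, and the defining property of coarse right inverses shows that this composition is close to $\mathrm{id}_{\mathcal{O}}$. A routine check, analogous to the argument in \reftext{Lemma~\ref{lem:QuasiIsoHasCoaInv}}, shows that the quasi-isometry property is preserved under replacing a map by one close to it, so $\mathrm{id}_{\mathcal{O}}$ is a quasi-isometry. This establishes (i) $\Leftrightarrow$ (iii).

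Finally, (ii) $\Leftrightarrow$ (iii) is a direct application of \reftext{Proposition~\ref{prop:ce_eq_qi}}, once one observes that $(H_{1}, d_{W})$ and $(H_{2}, d_{V})$ are $1$-large-scale geodesic: if $d_{W}(g,h) = m$, writing $g^{\inv}h = w_{1} \cdots w_{m}$ with $w_{k} \in W$ and putting $g_{k} := g w_{1} \cdots w_{k}$ yields a chain $g = g_{0}, g_{1}, \ldots, g_{m} = h$ of length $m$ with consecutive word-distances at most $1$, and analogously for $(H_{2}, d_{V})$. The main obstacle of the argument is the bookkeeping in the converse half of (i) $\Leftrightarrow$ (iii), i.e.\ ensuring that closeness of $p_{\xi_{2}}^{H_{2}} \circ \phi \circ q$ to $\mathrm{id}_{\mathcal{O}}$ is preserved through the compositions and that the quasi-isometry constants can be adjusted accordingly; every other step reduces to a direct appeal to results already proved in the excerpt.
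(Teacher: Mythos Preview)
Your argument is correct and follows essentially the same route as the paper: reduce (i) to weak equivalence of the induced coverings via \reftext{Theorem~\ref{thm:coorbit_equiv_dual_orbits}}, translate that via \reftext{Theorem~\ref{thm:WeakEquivQuasiIso}} to a quasi-isometry statement for $\mathrm{id}_{\mathcal{O}}$, and move between $\mathrm{id}_{\mathcal{O}}$ and $\phi$ using the orbit maps as quasi-isometries (\reftext{Theorem~\ref{thm:OrbitMapQuasiIsometry}}).

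Two small remarks. First, the ``bookkeeping'' you flag in the direction (iii) $\Rightarrow$ (i) is self-inflicted: if you choose $q$ to be a \emph{set-theoretic} right inverse of $p_{\xi_{1}}^{H_{1}}$ (which exists by surjectivity and is automatically a quasi-isometry via \reftext{Lemma~\ref{lem:QuasiIsoHasCoaInv}}), then $p_{\xi_{2}}^{H_{2}} \circ \phi \circ q = p_{\xi_{2}}^{H_{2}} \circ (p_{\xi_{2}}^{H_{2}})^{\inv} \circ p_{\xi_{1}}^{H_{1}} \circ q = \mathrm{id}_{\mathcal{O}}$ exactly, and no closeness argument is needed. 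This is what the paper does. Second, for (ii) $\Leftrightarrow$ (iii) you invoke \reftext{Proposition~\ref{prop:ce_eq_qi}} directly, but the word metrics $d_{W}, d_{V}$ may take the value $\infty$ when the groups are disconnected, so strictly speaking that proposition does not apply. The paper closes this gap by observing, via \reftext{Lemmas~\ref{lem:OrbitMapProperties} and \ref{lem:conn_comps}}, that $H_{1}$ and $H_{2}$ have only finitely many connected components, so that \reftext{Remark~\ref{rem:metric_inf_ok}} legitimizes the componentwise application of \reftext{Proposition~\ref{prop:ce_eq_qi}}.
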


\begin{proof}
First, we note that any right inverse of $p_{\xi _{2}}^{H_{2}}$ is also
a coarse right inverse in the sense of \reftext{Definition~\ref{def:CoarseRightInverse}}. Since our assumptions and \reftext{Theorem~\ref{thm:OrbitMapQuasiIsometry}} imply that
$p_{\xi _{2}}^{H_{2}}(H_{2}, d_{V}) \to (\mathcal{O}', d_{\mathcal{P}})$
is a quasi-isometry, \reftext{Lemma~\ref{lem:QuasiIsoHasCoaInv}} shows that
\begin{equation*}
\left (p_{\xi _{2}}^{H_{2}}\right )^{{-1}}:(\mathcal{O}, d_{
\mathcal{P}}) \to (H_{2}, d_{V})%
\end{equation*}
is also a quasi-isometry.

$i)\Rightarrow iii)$: By \reftext{Theorem~\ref{thm:coorbit_equiv_dual_orbits}},
$H_{1}$ and $H_{2}$ are coorbit equivalent iff
$\mathcal{O} = \mathcal{O}'$, and the dual coverings induced by
$H_{1}$ and $H_{2}$ are weakly equivalent.

We can now write
\begin{equation*}
\phi = \left (p_{\xi _{2}}^{H_{2}} \right )^{{-1}}\circ \mathrm{id}_{
\mathcal{Q}}^{\mathcal{P}}\circ p_{\xi _{1}}^{H_{1}} : (H_{1}, d_{W})
\to (H_{2}, d_{V})~,%
\end{equation*}
as a composition of quasi-isometries:
$\mathrm{id}_{\mathcal{Q}}^{\mathcal{P}}$ is a quasi-isometry
$(\mathcal{O},d_{\mathcal{Q}})\to (\mathcal{O},d_{\mathcal{P}})$ by \reftext{Theorem~\ref{thm:WeakEquivQuasiIso}}, and the other two mappings are quasi-isometries
by \reftext{Theorem~\ref{thm:OrbitMapQuasiIsometry}}. Hence $\phi $ is a quasi-isometry
as well.

$iii)\Rightarrow i)$: Our assumptions imply that
$p_{\xi _{1}}^{H_{1}}$ and $p_{\xi _{2}}^{H_{2}}$ are quasi-isometries.
Furthermore, the same argument as for $p_{\xi _{2}}^{H_{2}}$ shows that
there exists a quasi-isometric right inverse
$\left (p_{\xi _{1}}^{H_{1}}\right )^{{-1}}$ of
$p_{\xi _{1}}^{H_{1}}$. We conclude that the map
\begin{align*}
p_{\xi _{2}}^{H_{2}} \circ \phi \circ \left (p_{\xi _{1}}^{H_{1}}
\right )^{{-1}}&= p_{\xi _{2}}^{H_{2}} \circ \left (\left (p_{\xi _{2}}^{H_{2}}
\right )^{{-1}}\circ \mathrm{id}_{\mathcal{Q}}^{\mathcal{P}}\circ p_{
\xi _{1}}^{H_{1}}\right ) \circ \left (p_{\xi _{1}}^{H_{1}}\right )^{{-1}}
\\
& = \left (p_{\xi _{2}}^{H_{2}} \circ \left (p_{\xi _{2}}^{H_{2}}
\right )^{{-1}}\right ) \circ \mathrm{id}_{\mathcal{Q}}^{\mathcal{P}}
\circ \left (p_{\xi _{1}}^{H_{1}}\circ \left (p_{\xi _{1}}^{H_{1}}
\right )^{{-1}}\right )
\\
& =\mathrm{id}_{\mathcal{Q}}^{\mathcal{P}}
\end{align*}
is a quasi-isometry as composition of quasi-isometric maps. \reftext{Theorem~\ref{thm:WeakEquivQuasiIso}} implies that the coverings $\mathcal{Q}$ and
$\mathcal{P}$ are weakly equivalent. This suffices to establish the coorbit equivalence
of the groups $H_{1}$ and $H_{2}$, by \reftext{Theorem~\ref{thm:WeakEquivQuasiIso}}.

In order to prove the equivalence $ii) \Leftrightarrow iii)$, we first
observe that $H_{1},H_{2}$ both have finitely many connected components, by combining \reftext{Lemmas~\ref{lem:OrbitMapProperties} and \ref{lem:conn_comps}}.

With that fact established for $H_{1}$ and $H_{2}$,
$ii) \Leftrightarrow iii)$ follows from \reftext{Proposition~\ref{prop:ce_eq_qi}} and \reftext{Remark~\ref{rem:metric_inf_ok}}, since the word
metrics are large scale geodesic.
\end{proof}

\begin{remark}%
\label{rem:MainTheorem}
The main appeal of the theorem is that it replaces the technicalities involved
with computing and comparing coverings induced by different dilation groups
by a problem in geometric group theory. Whether this new formalism actually
simplifies the task still depends on the groups at hand; the results of
Section~\ref{sect:coorbit_equiv_shearlet} present a class of groups where
this is arguably the case.

At least the theorem opens the door to the systematic application of geometric
group theory techniques to the problem of deciding coorbit equivalence.
For example, the machinery of coarse invariants of groups can be brought
to bear to derive necessary criteria for coorbit equivalence. E.g., if
$H_{1}$ and $H_{2}$ are coorbit equivalent and unimodular, and
$H_{1}$ is amenable, then $H_{2}$ must be amenable as well, by Corollary
4.F.9 of \cite{Cornulier_2016}.

It is important to realize, though, that the theorem refers to coarse properties
of a very specific map between the groups $H_{1}$ and $H_{2}$, which is
induced by the respective dual actions. In particular, Section~\ref{sect:coorbit_equiv_shearlet} will provide a rich class of examples
of pairs of groups $H_{1}$ and $H_{2}$ that are topologically isomorphic
(and thus quasi-isometric), without being coorbit equivalent; see \reftext{Remark~\ref{rem:coarse_vs_coorbit_equiv}} below.
\end{remark}

\section{Characterizing coorbit equivalence of shearlet dilation groups}
\label{sect:coorbit_equiv_shearlet}

Shearlet groups in arbitrary dimensions, and their associated coorbit spaces,
were first introduced in
\cite{DahlkeShearletCoorbitSpacesCompactlySupported}. Later on, \emph{Toeplitz
shearlets}, a variation on the shearlet construction in dimensions
$\ge 3$, were introduced in
\cite{DahlkeHaeuTesCooSpaTheForTheToeSheTra}. The next definition provides
the details. The initial definition of Toeplitz shearlets only allowed
the inclusion of isotropic dilation, corresponding to the case
$\delta = 0$ in part ii). The availability of anisotropic scaling for this
class of groups was stated later in \cite{AlbertiEtAl2017}.
%
\begin{definition}[\cite{AlbertiEtAl2017}\label{def:StandardToeplitzGroups} Example 17. and Example 18.]%
\label{def:StandardToeplitzShearletGroups}
\begin{enumerate}[ii)]
\item[i)] For
$\lambda =(\lambda _{1}, \ldots , \lambda _{d-1})\in \mathbb{R}^{d-1}$,
we define the \textit{standard shearlet group in $d$-dimensions}
$H^{\lambda}$ as the set
\begin{align*}
\Set{\epsilon \, \mathrm{diag}\left (a, a^{\lambda _{1}},\ldots , a^{\lambda _{d-1}}\right )
\begin{pmatrix}
1 & s_{1} & \ldots & s_{d-1} \\
& 1 & 0\ldots & 0	\\
& & \ddots & 0	\\
& & & 1
\end{pmatrix}
|
\begin{array}{l}
a>0, \\
s_{i}\in \mathbb{R},\\
\epsilon \in \Set{\pm 1}
\end{array}
}.
\end{align*}
\item[ii)] For $\delta \in \mathbb{R}$, we define the
\textit{Toeplitz shearlet group in $d$-dimensions} $H^{\delta}$ as the set
\begin{align*}
\Set{\epsilon \, \mathrm{diag}\left (a, a^{1-\delta},\ldots , a^{1-(d-1)\delta}\right )\cdot T(1, s_{1}, \ldots , s_{d-1})
|
\begin{array}{l}
a>0,\\
s_{i}\in \mathbb{R},\\
\epsilon \in \Set{\pm 1}
\end{array}
},
\end{align*}
where the matrix $T(1, s_{1}, \ldots , s_{d-1})$ is defined by
\begin{align*}
T(1, s_{1}, \ldots , s_{d-1}):=
\begin{pmatrix}
1 & s_{1} & s_{2} & \ldots & \ldots & s_{d-1}
\\
& 1 & s_{1} & s_{2} & \ldots & s_{d-2}
\\
&& \ddots & \ddots & \ddots &
\\
&&& 1 & s_{1} & s_{2}
\\
&&&& 1 & s_{1}
\\
&&&&& 1
\end{pmatrix}.
\end{align*}
\end{enumerate}
\end{definition}

\begin{remark}
\label{rem5.2}
The two constructions are special cases of the class of
\emph{generalized shearlet dilation groups} introduced in \cite{FuRT} and
further studied in \cite{AlbertiEtAl2017}; see \reftext{Definition~\ref{defn:shearlet_dil}} below. Note that each class actually defines a
whole family of groups, parametrized by different choices for the diagonal.

In dimension two the distinction between the standard and Toeplitz shearlet
dilation groups is moot. In dimension three, one can show that standard
and Toeplitz shearlet are the only possible generalized shearlet dilation
groups (cf. \cite{AlbertiEtAl2017}, Remark 19). With increasing dimension,
the set of generalized shearlet dilation groups besides the standard and
Toeplitz case quickly becomes untractable.

Prior to the thesis \cite{KochDoktorarbeit}, the simple question whether
standard and Toeplitz shearlets have distinct coorbit spaces was not under
consideration. \cite{KochDoktorarbeit} established that, for any dimension
$d \ge 3$, two distinct groups from the families in \reftext{Definition~\ref{def:StandardToeplitzShearletGroups}} are not coorbit equivalent. For
dimension three, this was done by computing and comparing induced coverings,
see Chapter 3 of \cite{KochDoktorarbeit} for details. The arguments employed
to establish this result nicely illustrate the computational burden imposed
by the use of induced coverings, and motivated the introduction of coarse geometry as a tool in Chapter 5 of \cite{KochDoktorarbeit}. The comparison of standard and Toeplitz shearlet groups in dimensions $>3$ was then performed using the new toolbox. Our treatment of general shearlet dilation groups is a generalization of the arguments from \cite[Section 5.5]{KochDoktorarbeit}. 
\end{remark}

\subsection{Shearlet dilation groups: definition and basic properties}
\label{sec5.1}

We now recall basic notation regarding matrix groups and their Lie algebras.
$\mathfrak{gl}(\mathbb{R}^{d})$ denotes the set of all real
$d \times d$-matrices. We let
\begin{equation*}
\exp : \mathfrak{gl}(\mathbb{R}^{d}) \to \mathrm{GL}(\mathbb{R}^{d})%
\end{equation*}
be the exponential map defined by
\begin{equation*}
\exp (A) := \sum _{k=0}^{\infty}\frac{A^{k}}{k!}~.%
\end{equation*}
Furthermore, we denote with
$T(\mathbb{R}^{d}) \subset \mathrm{GL}(\mathbb{R}^{d})$ the group of upper
triangular $d\times d$-matrices with one on their diagonals. By definition,
the Lie algebra of a closed subgroup
$H\subset \mathrm{GL}(\mathbb{R}^{d})$ is the set $\mathfrak{h}$ of all
matrices $Y$ in $\mathfrak{gl}(\mathbb{R}^{d})$ such that
$\exp (tY)\in H$ for all $t\in \mathbb{R}$.

\begin{definition}[\cite{AlbertiEtAl2017} Definition 1.]
\label{defn:shearlet_dil}
Let $H\subset \mathrm{GL}(\mathbb{R}^{d})$ be a closed, admissible dilation
group. The group $H$ is called
\textit{generalized shearlet dilation group} if there exist two closed subgroups
\begin{equation*}
S, D \subset \mathrm{GL}(\mathbb{R}^{d})%
\end{equation*}
such that
\begin{enumerate}[iii)]
\item[i)] $S$ is a connected abelian subgroup of $T(\mathbb{R}^{d})$,
\item[ii)] $D=\Set{\exp (rY) | r\in \mathbb{R}}$ is a one-parameter group, where
$Y\in \mathfrak{gl}(\mathbb{R}^{d})$ is a diagonal matrix and
\item[iii)] every $h\in H$ has a unique representation as $h=\pm ds$ for some
$d\in D$ and $s\in S$.
\end{enumerate}
$S$ is called the \textit{shearing subgroup of $H$}\index{shearing subgroup},
$D$ is called the \textit{scaling subgroup of $H$}, and $Y$ is called the
\textit{infinitesimal generator of $D$}.
\end{definition}

The article \cite{AlbertiEtAl2017} describes a systematic algebraic construction method
for shearlet groups in arbitrary dimension $d \ge 2$ that proceeds by first constructing
a shearing group $S$ from an arbitrary nilpotent associative algebra of dimension $d-1$, and then determining conditions on the diagonal complement
$D$ from certain structure constants of the nilpotent algebra. 

We denote the canonical basis of $\mathbb{R}^{d}$ with
$e_{1},\ldots , e_{d}$ and the identity matrix in
$\mathrm{GL}(\mathbb{R}^{d})$ with $I_{d}$. The next result contains information
about the structure of shearing subgroups.

\begin{lemma}[\cite{AlbertiEtAl2017} Lemma 5. and Lemma 6.]%
\label{lem:CanonicalBasis}
Let $S$ be the shearing subgroup of a generalized shearlet dilation group
$H\subset \mathrm{GL}(\mathbb{R}^{d})$. Then the following statements hold:
\begin{enumerate}[iii)]
\item[i)] There exists a unique basis $X_{2},\ldots , X_{d}$ of the Lie algebra
$\mathfrak{s}$ of $S$ with $X_{i}^{T}e_{1}=e_{i}$ for
$2\leq i \leq d$, called the canonical basis of $\mathfrak{s}$.
\item[ii)] We have $S=\Set{I_{d} + X | X\in \mathfrak{s}}$.
\item[iii)] Let $\mathfrak{s}_{k} = {\mathrm{span}}\{X_{j} : j \ge k \}$, for
$k \in \{ 2, \ldots , d \}$. These are associative matrix algebras satisfying
$\mathfrak{s}_{k} \mathfrak{s}_{\ell }\subset \mathfrak{s}_{k+\ell -1}$,
where we write $\mathfrak{s}_{m} = \{ 0 \}$ for $m >d$.
\item[iv)] $H$ is the inner semidirect product of the normal subgroup $S$ with
the closed subgroup $D \cup -D$.
\end{enumerate}
\end{lemma}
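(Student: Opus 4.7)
My plan is to address (i)--(iv) in turn, exploiting the interplay between the abelian unipotent structure of $S$ and the normalizing action of the scaling group $D$. Claim (ii) is the foundation: since $S \subset T(\mathbb{R}^d)$ is a connected abelian subgroup of the unipotent upper triangular matrices, $\exp$ restricts to a polynomial diffeomorphism from the strictly upper triangular matrices onto $T(\mathbb{R}^d)$, with polynomial inverse $\log(I+N) = N - N^2/2 + \ldots$ (convergent because $N$ is nilpotent). Expanding $(I+X)(I+Y) = (I+Y)(I+X)$ for $X, Y \in \mathfrak{s}$ yields $XY = YX$, and $(I+X)(I+Y) = I + (X + Y + XY) \in S$ together with $X + Y \in \mathfrak{s}$ yields $XY \in \mathfrak{s}$. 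Hence $\mathfrak{s}$ is a commutative associative subalgebra of strictly upper triangular matrices, and $S = \{I + X : X \in \mathfrak{s}\}$ via $\exp$ and $\log$.

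For (i), consider the linear map $\phi : \mathfrak{s} \to \mathbb{R}^d$, $X \mapsto X^T e_1$, whose image lies in $\mathrm{span}(e_2,\ldots,e_d)$ by strict upper triangularity. After relabeling so that the base point $\xi_0 = e_1$, admissibility of $H$ forces openness of the dual orbit at $e_1$, hence surjectivity of the differential $(rY,X) \mapsto -rY^T e_1 - X^T e_1$ of $p_{e_1}$ at the identity; since $Y$ is diagonal, this decouples into $Y_{11} \neq 0$ and $\mathrm{Im}(\phi) = \mathrm{span}(e_2,\ldots,e_d)$. Compactness of the isotropy at $e_1$, combined with the fact that nontrivial closed connected subgroups of the simply connected nilpotent Lie group $S \cong \mathbb{R}^{\dim \mathfrak{s}}$ are never compact, forces $\ker \phi = \{0\}$; hence $\phi$ is a bijection onto $\mathrm{span}(e_2,\ldots,e_d)$ and $X_j := \phi^{-1}(e_j)$ for $j = 2,\ldots,d$ is the unique basis with the required property. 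Claim (iv) is then essentially formal: for $h_i = \pm d_i s_i$ the product $h_1 h_2 = \pm d_1 d_2 (d_2^{-1} s_1 d_2) s_2$ lies in $H$, so uniqueness of the factorization forces $d_2^{-1} s_1 d_2 \in S$, i.e., $D$ normalizes $S$; together with $S$ being abelian this gives $S \triangleleft H$, and uniqueness of $h = \pm ds$ yields $(D \cup -D) \cap S = \{I\}$, whence $H = (D \cup -D) \ltimes S$.

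For (iii), I would reformulate the filtration as $\mathfrak{s}_k = \phi^{-1}(\mathrm{span}(e_k,\ldots,e_d))$ and translate the inclusion into a statement about first rows. The associativity identity $(X_j X_k)^T e_1 = X_k^T e_j$ shows that the first row of $X_j X_k$ equals the $j$-th row of $X_k$; strict upper triangularity alone gives support in columns $\geq j+1$, and commutativity $X_j X_k = X_k X_j$ applied symmetrically in $(j,k)$ upgrades this to $\geq \max(j,k)+1$. To reach the full bound $j+k-1$ I would invoke the normalizing action of $D$ from (iv): a first-row comparison shows $\mathrm{Ad}(d) X_m = (a_1/a_m) X_m$ for $d = \mathrm{diag}(a_1,\ldots,a_d) \in D$, so each $X_m$ is a joint eigenvector of $\mathrm{Ad}(D)$. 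Expanding $X_j X_k = \sum_m c_m X_m$ and matching the eigenvalue $a_1^2/(a_j a_k)$ forces $c_m \neq 0 \Rightarrow a_m/a_1 = (a_j/a_1)(a_k/a_1)$ for every $d \in D$, and the admissibility constraints on the one-parameter subgroup $D$ then translate this into $m \geq j+k-1$. The main obstacle is precisely this sharp bound: the kinematic arguments yield only $\mathfrak{s}_k \mathfrak{s}_\ell \subset \mathfrak{s}_{\max(k,\ell)+1}$, and closing the gap to $\mathfrak{s}_{k+\ell-1}$ appears to genuinely require the eigenvalue-matching via $\mathrm{Ad}(D)$ together with the structural constraints admissibility imposes on the infinitesimal generator $Y$.
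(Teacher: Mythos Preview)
The paper does not prove this lemma; it is imported from \cite{AlbertiEtAl2017} without argument, so there is no in-paper proof to compare against. Your treatment of (i) and (iv) is sound---the mechanism for (i) (surjectivity of the differential of $p_{e_1}$ from openness of the orbit, injectivity of $\phi$ from compactness of the isotropy inside the simply connected nilpotent group $S$) is exactly right---but there are two genuine gaps elsewhere.

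For (ii), your argument is circular: you deduce $XY\in\mathfrak{s}$ from $(I+X)(I+Y)\in S$, but the membership $I+X\in S$ for $X\in\mathfrak{s}$ is precisely the content of (ii). The underlying assertion that $\mathfrak{s}$ is automatically an associative subalgebra whenever $S$ is a connected abelian subgroup of $T(\mathbb{R}^d)$ is false (take $\mathfrak{s}=\mathrm{span}\{E_{12}+E_{23}\}$ in $d=3$, where $X^2=E_{13}\notin\mathfrak{s}$); the admissibility hypothesis on $H$ is genuinely needed here and you do not invoke it. For (iii), your derivation of the weaker inclusion $\mathfrak{s}_k\mathfrak{s}_\ell\subset\mathfrak{s}_{\max(k,\ell)+1}$ is correct (modulo (ii)), but the proposed upgrade to $\mathfrak{s}_{k+\ell-1}$ via $\mathrm{Ad}(D)$-eigenvalue matching does not close: the relation $\lambda_m-1=(\lambda_j-1)+(\lambda_k-1)$ carries no information about the ordering of the indices, and nothing in the admissibility constraints supplies it. In fact the sharp inclusion appears to fail already for the shearing subgroups $\mathfrak{s}_\alpha$, $\alpha\neq 0$, of Section~\ref{sec6.3} of this very paper: a direct computation gives $X_{3,\alpha}^2=\alpha X_4\neq 0$, whereas the stated inclusion would force $X_3^2\in\mathfrak{s}_5=\{0\}$. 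Note that the paper itself only ever uses the weaker inclusion $X_iX_j\in\mathfrak{s}_{\max(i,j)+1}$ (see the proof of Lemma~\ref{lem:shear_diag_gleich}), which your argument does establish.
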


Every generalized shearlet dilation group $H$ is admissible, and the next
result shows that all of them share the same dual orbit. This ensures that
one of the basic conditions for coorbit equivalence is already fulfilled.

\begin{lemma}[\cite{AlbertiEtAl2017} Proposition 11.]%
\label{lem:OrbitShearletGroups}
Let $S\subset T(\mathbb{R}^{d})$ be a shearing subgroup and $D$ be a compatible
scaling subgroup such that
\begin{equation*}
H= DS\cup (-DS)%
\end{equation*}
is a generalized shearlet dilation group. Then the unique open dual orbit
of $H$ is given by
$\mathcal{O}=\mathbb{R}^{*} \times \mathbb{R}^{d-1}$ and the isotropy group
of $\xi \in \mathcal{O} $ with respect to the dual action is given by
$H_{\xi}=\Set{I_{d}}$.
\end{lemma}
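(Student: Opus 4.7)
The plan is to fix the basepoint $\xi_0 = e_1$ and to analyze the dual orbit $H^{-T} e_1$ and the isotropy group $H_{e_1}$ separately. Using the unique decomposition $h = \epsilon d s$ with $\epsilon \in \{\pm 1\}$, $d \in D$ and $s \in S$ supplied by \reftext{Definition~\ref{defn:shearlet_dil}} together with \reftext{Lemma~\ref{lem:CanonicalBasis}}~(iv), I would factor $h^{-T} e_1 = \epsilon\, d^{-1} s^{-T} e_1$, exploiting that diagonal matrices are symmetric.

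The heart of the argument is the analysis of the $S$-action on $e_1$. By \reftext{Lemma~\ref{lem:CanonicalBasis}}~(ii), every $s \in S$ has the form $s = I_d + X$ with $X \in \mathfrak{s}$, and since $X$ is strictly upper triangular the expansion
\begin{equation*}
s^{-T} e_1 = e_1 + \sum_{k \geq 1} (-1)^k (X^T)^k e_1
\end{equation*}
is a finite sum whose first coordinate equals $1$. Hence the map $\Phi : \mathfrak{s} \to \mathbb{R}^d$, $X \mapsto s^{-T} e_1$, takes values in $\{1\} \times \mathbb{R}^{d-1}$, and I would show that it is a bijection onto this slice. Writing $X = \sum_{j=2}^{d} c_j X_j$ in the canonical basis of \reftext{Lemma~\ref{lem:CanonicalBasis}}~(i), the filtration $\mathfrak{s}_k \mathfrak{s}_\ell \subset \mathfrak{s}_{k+\ell-1}$ from part~(iii) gives $X^k \in \mathfrak{s}_{k+1}$ by induction, hence $(X^T)^k e_1 \in \mathrm{span}\{e_m : m \geq k+1\}$. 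Collecting terms shows that the $i$-th coordinate of $s^{-T} e_1 - e_1$ has the form $-c_i + P_i(c_2, \ldots, c_{i-1})$ for a polynomial $P_i$, so the system can be solved iteratively for $c_2, c_3, \ldots, c_d$ given any target in $\{1\} \times \mathbb{R}^{d-1}$. Extracting this triangular polynomial structure from the associative-algebra filtration is the main technical step.

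For the full orbit I would then incorporate $D \cup -D$. Writing $Y = \mathrm{diag}(y_1, \ldots, y_d)$ and $d = \exp(rY)$, one obtains $d^{-1}(e_1 + v) = e^{-ry_1} e_1 + d^{-1} v$ for $v \in \mathrm{span}\{e_2, \ldots, e_d\}$. Admissibility of $H$ forces $y_1 \neq 0$: otherwise the full $H^{-T}$-orbit of $e_1$ would be contained in $\{\pm 1\} \times \mathbb{R}^{d-1}$ and could not be open in $\mathbb{R}^d$. Therefore $\{\epsilon e^{-ry_1} : \epsilon \in \{\pm 1\},\, r \in \mathbb{R}\} = \mathbb{R}^*$, and since $d^{-1}$ acts invertibly on $\mathrm{span}\{e_2, \ldots, e_d\}$, the remaining $d-1$ coordinates of $h^{-T} e_1$ sweep out all of $\mathbb{R}^{d-1}$. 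This yields $H^{-T} e_1 = \mathbb{R}^* \times \mathbb{R}^{d-1}$.

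The isotropy computation is now immediate: if $\epsilon\, d^{-1} s^{-T} e_1 = e_1$, matching first components gives $\epsilon e^{-ry_1} = 1$, which forces $\epsilon = 1$ and $r = 0$, so that $d = I_d$; the residual equation $s^{-T} e_1 = e_1$ then forces $X = 0$ by the injectivity half of the bijection above, hence $s = I_d$ and $h = I_d$. Conjugation invariance of isotropy under the transitive $H$-action transfers the conclusion $H_\xi = \{I_d\}$ to every $\xi \in \mathcal{O}$.
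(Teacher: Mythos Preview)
Your argument is correct. The paper does not actually prove this lemma; it merely cites it as Proposition~11 of \cite{AlbertiEtAl2017}. Your proof supplies the details that the citation stands in for, and it does so along the expected lines: the triangular structure you extract from the filtration $\mathfrak{s}_k \mathfrak{s}_\ell \subset \mathfrak{s}_{k+\ell-1}$ of \reftext{Lemma~\ref{lem:CanonicalBasis}}~(iii) is exactly what makes the map $s \mapsto s^{-T} e_1$ a bijection onto $\{1\}\times\mathbb{R}^{d-1}$, and your use of admissibility to force $y_1 \neq 0$ matches the normalization the paper invokes later (via \cite[Proposition~7]{AlbertiEtAl2017}) when it writes $Y = \mathrm{diag}(1,\lambda_2,\ldots,\lambda_d)$. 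The only minor point worth tightening is the surjectivity step: once $\epsilon$ and $r$ are fixed by the first coordinate of the target, the diagonal block $d^{-1}$ acting on $\mathrm{span}\{e_2,\ldots,e_d\}$ is a \emph{fixed} invertible map, so you solve for the tail of $s^{-T} e_1$ first and then invoke the bijection --- which is what you do, but the phrasing ``the remaining $d-1$ coordinates sweep out all of $\mathbb{R}^{d-1}$'' slightly obscures that order.
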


Note that in particular, $H_{\xi }\subset H_{0}$, where $H_{0} = DS$ denotes
the connected component of the neutral element. Furthermore, the orbit
map is bijective, thus it has a unique (right) inverse. Throughout this
section, we will use the representative $\xi _{0} = (1,0,\ldots ,0)$. As
a consequence of the bijectivity of $p_{\xi_0}$, each element of $H$ is uniquely determined
by its first line.

Note that if $H =DS \cup - DS$ is a shearlet dilation group, one can replace
$D = \exp (r Y)$ be $D' = \mathbb{R}^{+} \cdot I_{d}$ to obtain the
\textit{associated abelian shearlet dilation group}
$A = D'S \cup -D'S$. The structure of abelian dilation groups was elucidated
in \cite{FuehrContinuousWaveletTransformsWithAbelian}, and the following
lemma notes some useful properties arising from this structure. 
%
\begin{lemma}
\label{lem:abel_dil_gr}
\begin{enumerate}
\item[(a)] Let $A$ denote an abelian admissible dilation group. The Lie
algebra $\mathfrak{a}$ of $A$ coincides with the associative algebra
$\mathcal{A} = {\mathrm{span}}(A)$ generated by $A$, and
$A \subset \mathcal{A}$ is the group of invertible elements in
$\mathcal{A}$. Furthermore, the dual action of $A$ on the open dual orbit
is free.
\item[(b)] Let $A,A' \subset GL(\mathbb{R}^{d})$ be abelian admissible
dilation groups with $\mathcal{A} = {\mathrm{span}}(A)$ and
$\mathcal{A}' = {\mathrm{span}}(A')$. Let
$\alpha : \mathcal{A} \to \mathcal{A}'$ denote an algebra isomorphism.
Then there exists $C \in GL(\mathbb{R}^{d})$ such that
$\alpha (X) = C^{-1} X C$ holds, for all $X \in \mathcal{A}$.
\end{enumerate}
\end{lemma}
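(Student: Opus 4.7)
My plan for part (a) is to isolate the algebraic structure of $\mathcal{A}$ from the analytic and measure-theoretic content of admissibility. That $\mathcal{A} = \mathrm{span}(A)$ is a commutative unital associative subalgebra of $\mathfrak{gl}(\mathbb{R}^d)$ is immediate: closure of $A$ under products extends bilinearly to $\mathcal{A}$, commutativity is inherited from $A$, and $I \in A \subset \mathcal{A}$. The inclusion $\mathfrak{a} \subset \mathcal{A}$ follows because each curve $t \mapsto \exp(tX)$ with $X \in \mathfrak{a}$ lies in $A \subset \mathcal{A}$, and the closed subspace $\mathcal{A}$ therefore contains its tangent vector $X$ at $t=0$. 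The crucial step is the bijectivity of the evaluation map
\[
\Psi : \mathcal{A} \to \mathbb{R}^d, \quad X \mapsto X^T \xi_0,
\]
for $\xi_0$ in the open dual orbit $\mathcal{O}$. Surjectivity follows from openness of $\mathcal{O}$, since the differential of the orbit map $A \to \mathcal{O}$ at $I$ equals $-\Psi|_{\mathfrak{a}}$. Injectivity is where commutativity does the work: if $X^T \xi_0 = 0$, then for every $h \in A$ the identity $X^T h^{-T} \xi_0 = (h^{-1} X)^T \xi_0 = (X h^{-1})^T \xi_0 = h^{-T} X^T \xi_0 = 0$ holds, so $X^T$ annihilates the dense orbit $\mathcal{O}$ and must vanish. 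Thus $\dim \mathcal{A} = d$, and combined with $\mathfrak{a} \subset \mathcal{A}$ and $\dim \mathfrak{a} \ge d$ from the surjective differential, $\mathfrak{a} = \mathcal{A}$ follows.

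For the identification $A = \mathcal{A}^\times$, I would transfer admissibility through $\Psi$. Since $h^{-T} \xi_0 = \Psi(h^{-1})$ and $A^{-1} = A$, the preimage $\Psi^{-1}(\mathcal{O})$ equals $A$; thus $A$ has full Lebesgue measure in $\mathcal{A}$, because $\Psi$ is a linear bijection and $\mathcal{O}$ has full measure in $\mathbb{R}^d$. The set $\mathcal{A}^\times \subset \mathcal{A}$ is the complement of the zero locus of the determinant polynomial, hence also of full measure. The inclusion $A \subset \mathcal{A}^\times$ consists of closed subgroups with matching Lie algebras, so $A$ is an open subgroup of $\mathcal{A}^\times$, partitioning it into cosets of positive Lebesgue measure; only a single coset is compatible with $A$ having full measure, giving $A = \mathcal{A}^\times$. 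Freeness of the dual action at $\xi_0$ is immediate from injectivity of $\Psi$: the equation $h^{-T} \xi_0 = \xi_0$ rewrites as $(h^{-1} - I)^T \xi_0 = 0$ with $h^{-1} - I \in \mathcal{A}$, forcing $h = I$.

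For part (b), I would recast the equation $\alpha(X) = C^{-1} X C$ as the statement that $C$ intertwines the standard left $\mathcal{A}$-action on $\mathbb{R}^d$ (by $X \cdot v = Xv$) with the pullback along $\alpha$ of the standard $\mathcal{A}'$-action. Both actions are faithful, and by part (a) each endows $\mathbb{R}^d$ with the structure of an $\mathcal{A}$-module whose $\mathbb{R}$-dimension equals $\dim_\mathbb{R} \mathcal{A} = d$. Primary decomposition of the Artinian commutative algebra $\mathcal{A}$ into local factors, combined with matching $\mathbb{R}$-dimensions factor by factor, shows that any faithful $\mathcal{A}$-module of $\mathbb{R}$-dimension $\dim_\mathbb{R} \mathcal{A}$ is free of rank one over $\mathcal{A}$ and hence isomorphic to the regular module. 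Both $\mathcal{A}$-module structures on $\mathbb{R}^d$ are therefore free of rank one, and any $\mathcal{A}$-module isomorphism between them supplies the required $C \in \mathrm{GL}(\mathbb{R}^d)$.

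The main obstacle, in my view, is the injectivity of $\Psi$ in part (a) and the subsequent upgrade from $A$ being only an open subgroup of $\mathcal{A}^\times$ to the equality $A = \mathcal{A}^\times$; both rely essentially on commutativity, which is why the same line of argument fails for non-abelian dilation groups. For part (b), the delicate ingredient is that a faithful module of the right dimension over a finite-dimensional commutative algebra must be free of rank one; this uses the algebra structure genuinely and cannot be obtained from dimension counting alone.
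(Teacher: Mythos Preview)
Your argument for part (a) is correct and in fact considerably more detailed than the paper's, which simply quotes \cite{FuehrContinuousWaveletTransformsWithAbelian} for these facts. The injectivity of $\Psi$ via commutativity, and the full-measure argument upgrading the open inclusion $A\subset\mathcal{A}^\times$ to equality, are exactly the right ideas.

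Part (b), however, has a genuine gap. Your key claim is that \emph{any} faithful $\mathcal{A}$-module of $\mathbb{R}$-dimension $\dim_\mathbb{R}\mathcal{A}$ is free of rank one. This is false already for one of the simplest relevant examples: take $\mathcal{A} = \mathbb{R}[x,y]/(x^2,xy,y^2)$, of dimension $3$, acting on $\mathbb{R}^3$ via $x\mapsto E_{12}$, $y\mapsto E_{13}$. This representation is faithful (the matrices $I, E_{12}, E_{13}$ are linearly independent), but the module is not cyclic: for any $v\in\mathbb{R}^3$ both $xv$ and $yv$ lie in $\mathbb{R}\, e_1$, so $\mathcal{A}\, v$ has dimension at most $2$. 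This $\mathcal{A}$ is precisely the algebra of the abelian standard shearlet group in dimension three, so the counterexample is not exotic. The underlying issue is that $\mathcal{A}$ need not be a Frobenius algebra, hence faithful modules of minimal dimension need not be isomorphic to the regular module; your ``matching dimensions factor by factor'' step cannot force freeness over a non-Gorenstein local factor.

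The paper avoids this by working on the dual side. What part (a) actually establishes is that the \emph{transpose} module --- $\mathbb{R}^d$ with action $X\cdot v := X^T v$ --- is free of rank one, with cyclic vector $\xi_0$; this is exactly the bijectivity of your map $\Psi$. The paper then sets $B := \psi\circ\alpha^{-1}\circ(\psi')^{-1}$, where $\psi=\Psi$ and $\psi'$ is its analogue for $\mathcal{A}'$, checks directly (using commutativity and $\psi(XZ)=X^T\psi(Z)$) that $B^{-1}X^T B = \alpha(X)^T$, and puts $C = B^{-T}$. If you wish to keep the module-theoretic phrasing, the fix is simple: replace the $X$-action by the $X^T$-action throughout. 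Both transpose modules \emph{are} free of rank one by part (a), hence isomorphic via some $B\in\mathrm{GL}(\mathbb{R}^d)$, and transposing the resulting intertwining relation yields $C=B^{-T}$.
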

\begin{proof}
The first statement of part (a) is noted in
\cite[Theorem 11]{FuehrContinuousWaveletTransformsWithAbelian}, whereas
the second statement is due to
\cite[Lemma 6]{FuehrContinuousWaveletTransformsWithAbelian}.

Note that a consequence of the freeness of the operation implies
$\dim (A) = \dim (\mathcal{A}) = d$. For part (b) let
$\psi : \mathcal{A} \to \mathbb{R}^{d}$, $\psi (X) = X^{T} \xi _{0}$, and
$\psi ': \mathcal{A}' \to \mathbb{R}^{d}$ be defined analogously. Then
$\psi ,\psi '$ are vector space isomorphisms: They are linear maps, whose
images contain the (open) dual orbits, by part (a). Hence they are onto,
and therefore bijective, since the dimensions coincide.

Now let $B \in GL(\mathbb{R}^{d})$,
$B = \psi \circ \alpha ^{-1} \circ (\psi ')^{-1}$, and
$X \in \mathcal{A}$. Given any $y \in \mathbb{R}^{d}$, one has
$y = \psi '(Y) = Y^{T} \xi _{0}$ with a unique $Y \in \mathcal{A}'$, and
we can compute for arbitrary $X \in \mathcal{A}$
\begin{eqnarray*}
B^{-1} X^{T} B y & = & \left ( (\psi ' \circ \alpha \circ \psi ^{-1})
\circ X^{T} \circ (\psi \circ \alpha ^{-1} \circ (\psi ')^{-1})
\right ) (y)
\\
& = & \left ( (\psi ' \circ \alpha \circ \psi ^{-1}) \circ X^{T}
\right ) (\psi (\alpha ^{-1} (Y)))~.
\end{eqnarray*}
The construction of $\psi $ and the fact that $\mathcal{A}$ is commutative
implies $\psi (XZ) = X^{T} \psi (Z)$ for all $X, Z \in \mathcal{A}$, hence
we can continue via
\begin{eqnarray*}
B^{-1} X^{T} B y & = & ( \psi ' \circ \alpha \circ \psi ^{-1})(\psi (X
\alpha ^{-1}(Y)))
\\
& = & (\psi ' \circ \alpha ) (X \alpha ^{-1}(Y))
\\
& = & \psi '( \alpha (X) Y) = \alpha (X)^{T} Y^{T} \xi _{0} = \alpha (X)^{T}
y~,
\end{eqnarray*}
where the third equation used that $\alpha $ is an algebra homomorphism.
Hence $C = B^{-T}$ has the desired properties.
\end{proof}

\begin{remark}
\label{ref:rem_sd_conj}
The associative algebra structure of $\mathfrak{s}$ is also the key to
the systematic construction of shearing subgroups. For the following facts,
we refer to \cite{AlbertiEtAl2017}, in particular to Lemma 13 and Remark
14 therein. Every nilpotent commutative associative algebra
$\mathcal{N}$ of dimension $d-1$ gives rise to the Lie algebra
$\mathfrak{s}$ of a shearing subgroup of a generalized shearlet dilation
group. The canonical basis of $\mathfrak{s}$ is constructed by picking
a Jordan-H\"older basis in $\mathcal{N}$, and applying a suitable linear
map $\mathcal{N} \to \frak{gl}(d,\mathbb{R})$. Letting $S = \exp (\mathfrak{s})$ and
$D = \mathbb{R}^{+} \cdot I_{d}$ then results in an abelian shearlet dilation
group $H = DS \cup -DS$. Furthermore, all candidates of infinitesimal generators
for $D$ can be found by solving a system of linear equations derived from
$\mathfrak{s}$, see \cite[Lemma 9]{AlbertiEtAl2017}.

It is useful to note that two different shearing subgroups
$S_{1},S_{2}$ are related to each other by conjugation iff their Lie algebras
are isomorphic as associative algebras. Here the necessity is clear; and
the sufficiency follows by \reftext{Lemma~\ref{lem:abel_dil_gr}} (b). Thus there
is a one-to-one correspondence between conjugacy classes of shearing subgroups
(or alternatively: of abelian shearlet dilation groups) in dimension
$d$ and of isomorphism classes of nilpotent commutative associative algebras
of dimension $d-1$. This illustrates the increasing richness of this class
of dilation groups, as $d$ grows. The main theorem of this section,
which characterizes coorbit equivalence within the class of shearlet dilation
groups, exhibits the relevance of the conjugacy problem also for coorbit equivalence.
\end{remark}

\begin{remark}
\label{rem:coarse_vs_coorbit_equiv}
Given a shearlet dilation group $H = DS \cup (-DS)$ with
$D = \exp (\mathbb{R} Y)$ and canonical basis $X_{2},\ldots ,X_{d}$ of
$S$, the entries of the infinitesimal generator $Y$ can be related to certain
algebraic relation. More precisely, a necessary requirement for $H$ to
be an admissible dilation group is that the first diagonal entry of
$Y$ does not vanish, by \cite[Proposition 7]{AlbertiEtAl2017}. $Y$ can
then be normalized to $1$ as first entry, i.e.,
$Y = \mathrm{diag}(1,\lambda _{2},\ldots ,\lambda _{d})$, and the remaining
entries of $Y$ occur in the \textit{structure relations}
\begin{equation*}
\forall i,j=2,\ldots ,d~:~ [Y,X_{i}] = (1-\lambda _{i}) X_{i} ~,~ [X_{i},X_{j}]
= 0~,
\end{equation*}
characterizing the Lie algebra $\mathfrak{h}$ up to isomorphism. It turns
out that this extends to the Lie group level, yielding that $H$ is topologically
isomorphic to $H' = D'S' \cup (-D'S')$ iff $D = D'$; and this clearly entails
that $H$ and $H'$ are coarsely isometric.

Observe that this characterization imposes no restrictions on $S$ and
$S'$, respectively. By contrast, \reftext{Theorem~\ref{thm:char_ce_shearlet}} establishes
the necessary criterion for coorbit equivalence that $S$ and $S'$ are related
by conjugacy, and by \reftext{Remark~\ref{ref:rem_sd_conj}}, this entails that the
Lie algebras $\mathfrak{s},\mathfrak{s}'$ must be isomorphic as
\textit{associative} algebras. In particular, by taking $H,H'$ as standard
resp. Toeplitz shearlet dilation groups in dimension $d \ge 3$, with identical
diagonal groups $D=D'$, we obtain pairs of dilation groups that are coarsely
isometric, but not coorbit equivalent. See also \reftext{Remark~\ref{rem:comp_stand_Toep}} below.

To our knowledge, a classification of semidirect products of the type
$\mathbb{R}^{m} \rtimes \exp (\mathbb{R} Y)$ up to quasi-isometry is not
available. A subclass of these groups, for which a rich theory has been
developed, is given by the so-called Heintze groups \cite{MR353210}. Even
here, a classification up to quasi-isometry does not seem to be in sight.
These observations make clear that coarse geometric methods generally do
not provide readily applicable solutions for the problem of classifying
dilation groups up to coorbit equivalence. They also {emphasize} that coorbit
equivalence of two dilation groups depends on the coarse geometric properties
of a very specific map relating the dual actions of these groups.
\end{remark}

We can now formulate a general characterization of coorbit equivalence
for shearlet groups. The following theorem generalizes the characterization
for two-dimensional shearlet groups obtained in
\cite{Voigtlaender2015PHD}, and the results for standard and Toeplitz shearlet
groups in \cite{KochDoktorarbeit}.
%
\begin{theorem}%
\label{thm:char_ce_shearlet}
Let
$H = DS \cup - DS,H' = D'S' \cup -D'S' \subset GL(\mathbb{R}^{d})$ denote
two shearlet dilation groups. Then the following properties are equivalent:
\begin{enumerate}
\item[(a)] $H$ and $H'$ are coorbit equivalent.
\item[(b)] $D = D'$, and there exists a matrix
$C \in GL(\mathbb{R}^{d})$ with the following properties:
$C^{-1} S C = S'$, and the conjugation actions of $D$ and $C$ on $S$ commute,
i.e.
%
\begin{equation}
\label{eqn:conj_commute}
\forall s \in S \forall d \in D~:~ d^{-1} C^{-1} s C d = C^{-1} d^{-1}
s d C~.
\end{equation}
\end{enumerate}
\end{theorem}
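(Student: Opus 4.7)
The plan is to combine \reftext{Theorem~\ref{thm:MainTheoremEquivalenceOfGroups}} with the explicit structure of shearlet groups. By \reftext{Lemma~\ref{lem:OrbitShearletGroups}}, both groups share the dual orbit $\mathcal{O} = \mathbb{R}^* \times \mathbb{R}^{d-1}$ with trivial isotropies, so \reftext{Theorem~\ref{thm:MainTheoremEquivalenceOfGroups}} applies and $\phi$ is uniquely defined. Using the decomposition $h = \epsilon \exp(rY) s$ together with $h^{-T}\xi_0 = \epsilon e^{-r} s^{-T}\xi_0$ (which relies on the normalization $Y_{11} = 1$ noted in \reftext{Remark~\ref{rem:coarse_vs_coorbit_equiv}}), matching with the analogous decomposition on the $H'$ side yields $\phi(\epsilon \exp(rY) s) = \epsilon \exp(rY') \alpha(s)$, where $\alpha : S \to S'$ is the unique bijection characterized by $\alpha(s)^{-T}\xi_0 = s^{-T}\xi_0$.

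For direction $(b) \Rightarrow (a)$, I would define $\Phi : H \to H'$ by $\Phi(\epsilon d s) = \epsilon d\,(C^{-1} s C)$. A direct computation using \reftext{(\ref{eqn:conj_commute})} and the semidirect decomposition $H = D \ltimes S \cup -D \ltimes S$ shows that $\Phi$ is a continuous group isomorphism, hence a quasi-isometry by \reftext{Lemma~\ref{lem:topologicalGroupIsoCoarseEq}}. To transfer this conclusion to coorbit equivalence via \reftext{Theorem~\ref{thm:coorbit_equiv_dual_orbits}}, I first observe that $C^{-1}SC = S'$ forces $Ce_1$ to be fixed by every $s \in S$; combined with the canonical basis property $X_k^T e_1 = e_k$, this yields $Ce_1 \in \mathrm{span}(e_1)$, so $C^T$ preserves $\mathcal{O}$ as a linear homeomorphism. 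The identity $\Phi(h)^{-T}\xi = C^T h^{-T} C^{-T}\xi$ then implies that the induced covering of $\mathcal{O}$ by $H'$ (built from the well-spread family $(\Phi(h_i))$ and set $C^T Q$) equals the $C^T$-image of the induced covering by $H$, and weak equivalence of the two coverings follows from the bounded distortion of the fixed linear map $C^T$ on the (logarithmic-scale) chain metric.

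For the harder direction $(a) \Rightarrow (b)$, assuming $\phi$ is a quasi-isometry, I would first derive $D = D'$ by applying the QI condition to elements of the form $\exp(rY) s_0$ for varying $r$; the word-metric growth in the scale direction, combined with the eigenvalues of $\mathrm{ad}_Y$ on $\mathfrak{s}$ (which govern how scaling distorts shear motion in the semidirect product), forces $Y = Y'$. Given $D = D'$, the restriction $\alpha : (S, d_W|_S) \to (S', d_V|_{S'})$ is itself a quasi-isometry between two commutative simply connected Lie groups isomorphic to $\mathbb{R}^{d-1}$ with word metrics QI to Euclidean norms. The defining equation $\alpha(s)^{-T}\xi_0 = s^{-T}\xi_0$ makes $\alpha$ a polynomial map, and the QI condition forces this polynomial to have degree one. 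This linear map corresponds to an isomorphism of associative algebras $\mathfrak{s} \to \mathfrak{s}'$, which by \reftext{Lemma~\ref{lem:abel_dil_gr}}(b) applied to the associated abelian shearlet groups $\mathbb{R}^+ \cdot I_d \cdot S$ and $\mathbb{R}^+ \cdot I_d \cdot S'$ is realized by matrix conjugation $X \mapsto C^{-1}XC$ for some $C \in \mathrm{GL}(\mathbb{R}^d)$. Finally, the commuting condition \reftext{(\ref{eqn:conj_commute})} encodes the compatibility of $C$ with the common $D$-action, which is enforced by the QI of $\phi$ on the full semidirect products.

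The main obstacle is direction $(a) \Rightarrow (b)$, with two critical technical points: (i) rigorously deriving $D = D'$ from the QI property requires a careful analysis of the word-metric growth on the semidirect product $D \ltimes S$, relating the eigenvalues of $\mathrm{ad}_Y$ on $\mathfrak{s}$ to the coarse structure in a precise way; and (ii) the passage from \emph{``$\alpha$ is a quasi-isometry''} to \emph{``$\alpha$ is a degree-one polynomial''} requires exploiting the specific polynomial form of $\alpha$ coming from the orbit map, since generic quasi-isometries of Euclidean space need not be close to linear maps.
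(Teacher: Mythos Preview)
Your overall strategy matches the paper's: both directions go through \reftext{Theorem~\ref{thm:MainTheoremEquivalenceOfGroups}} and the explicit form of $\phi$ computed in \reftext{(\ref{eqn:phi_shearlet_concrete})}. For $(b)\Rightarrow(a)$ the paper does essentially what you propose---verify that $ds \mapsto d\,C^{-1}sC$ is a group homomorphism using \reftext{(\ref{eqn:conj_commute})}, hence a topological isomorphism, hence a quasi-isometry---but without your detour through $C^{T}$ and coverings. (Incidentally, your identity $\Phi(h)^{-T}\xi = C^{T} h^{-T} C^{-T}\xi$ needs $C$ to commute with every $d\in D$, which is not part of hypothesis~(b).)

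The substantive gap is in $(a)\Rightarrow(b)$, and it is precisely the point you flag as obstacle~(ii), but the difficulty is worse than you indicate. Your plan is to show that $\alpha=\phi|_{S}$ is a polynomial quasi-isometry between copies of $\mathbb{R}^{d-1}$ carrying Euclidean-type metrics, and conclude that it is affine. The problem is that the metric $d_{W}|_{S\times S}$ inherited from $H$ is \emph{not} quasi-Euclidean in general: whenever some $\lambda_{j}\neq 1$ the subgroup $S$ is exponentially distorted in $H=D\ltimes S$, since conjugation by $\exp(rY)$ rescales $X_{j}$ by $e^{r(1-\lambda_{j})}$ and hence a large shear can be reached in logarithmically many $W$-steps by dilating, shearing, and un-dilating. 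So the ``polynomial QI of Euclidean space must be affine'' argument is simply not available here.

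The paper bypasses this with a different device. For fixed $s\in S$, the map
\[
\alpha(X)=\phi\bigl((I_{d}+X)s\bigr)^{-1}\phi(I_{d}+X)
\]
is polynomial in $X\in\mathfrak{s}$ (inversion of unipotent matrices is polynomial), and by left-invariance of $d_{H'}$ together with the quasi-isometry bound on $\phi$ it stays inside a fixed $d_{H'}$-ball independent of $X$. A bounded polynomial into $\mathbb{R}^{d\times d}$ is constant, whence $\alpha(X)=\alpha(0)=\phi(s)^{-1}$, i.e.\ $\phi|_{S}$ is multiplicative; then \reftext{Lemma~\ref{lem:abel_dil_gr}}(b) supplies $C$. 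The same ``bounded polynomial $\Rightarrow$ constant'' trick, applied to $\beta(t)=\phi(h(0,t)d)^{-1}\phi(h(0,t))$, yields the commuting relation~\reftext{(\ref{eqn:conj_commute})}. This technique also handles your obstacle~(i): rather than analysing word-metric growth rates abstractly, the paper exhibits an explicit sequence $h_{n}=h(1,e_{i})^{n}$ and computes that $(h_{n+1}')^{-1}h_{n}'$ is unbounded whenever $\lambda_{i}\neq\lambda_{i}'$, contradicting the quasi-isometry property directly.
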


\begin{remark}
\label{rem:comp_stand_Toep}
We can apply the theorem to the two families of shearlet dilation groups
introduced in \reftext{Definition~\ref{defn:shearlet_dil}}. We first observe that
the theorem explicitly states that different elements in the same family
(standard or Toeplitz), i.e., pairs of shearlet dilation groups that differ
only in the scaling subgroup, are not coorbit equivalent.

The comparison of elements from different families leads to the question
of conjugacy, which in turn is closely related to the associative structures
of the respective Lie algebras. Let $\mathfrak{s}_{s}$ denote the Lie algebra
of the shearing subgroup of the standard shearlet dilation group in dimension
$d \ge 3$, and let $\mathfrak{s}_{T}$ denote its Toeplitz counterpart.
Then any two elements $X,Y \in \mathfrak{s}_{s}$ fulfill $XY = 0$. By contrast,
$\mathfrak{s}_{T}$ is generated as an associative algebra by the element
$X_{2}$ of its canonical basis, showing that
$\mathfrak{s}_{T} \cong \mathbb{R}[X]/(X^{d-1})$. In particular,
$\mathfrak{s}_{s}$ and $\mathfrak{s}_{T}$ are not isomorphic as associative
algebras, and hence not conjugate.

In summary, any pair of distinct groups from \reftext{Definition~\ref{defn:shearlet_dil}} has distinct coorbit spaces. This fact was first
established in \cite{KochDoktorarbeit}.
\end{remark}

\subsection{Proof of \reftext{Theorem~\ref{thm:char_ce_shearlet}}}
\label{sec5.2}

Throughout this subsection, we will denote $H = DS \cup (-DS) $,
$H' = D' S' \cup (-D'S')$, with associated Lie algebras
$\mathfrak{h},\mathfrak{h'},\mathfrak{s},\mathfrak{s}'$, and canonical
bases $Y,X_{2},\ldots ,X_{d} \in \mathfrak{h}$ and
$Y',X_{2}',\ldots , X_{d}' \in \mathfrak{h}'$. By Proposition 7 of
\cite{AlbertiEtAl2017}, we can normalize $Y,Y'$ so that
\begin{equation*}
Y = {\mathrm{diag}}(1,\lambda _{2},\ldots ,\lambda _{d})~, ~Y' = {\mathrm{diag}}(1,
\lambda _{2}',\ldots ,\lambda _{d}')~.
\end{equation*}
We use
$\xi _{0} = (1,0,\ldots ,0) \in \mathcal{O} = \mathbb{R}^{*} \times
\mathbb{R}^{d-1}$. We parametrize group elements of $H$ and $H'$ as follows:
Given $r \in \mathbb{R}$ and
$t = (t_{2},\ldots ,t_{d}) \in \mathbb{R}^{d-1}$, we let
%
\begin{eqnarray}
\label{eqn:def_h}
h(r,t) & = & \exp (-rY) \left (I_{d}+ \sum _{j=2}^{d} t_{j} X_{j}
\right )^{-1} \in H~,~
\\
\nonumber
h'(r,t) & = & \exp (-rY') \left (I_{d}+ \sum _{j=2}^{d} t_{j} X_{j}'
\right )^{-1} \in H'~.
\end{eqnarray}
Note that $D = \{ h(r,0) : r \in \mathbb{R} \}$ and
$ S = \{ h(0,t) : t \in \mathbb{R}^{d-1} \}$, and analogous statements
hold for $D',S'$.

We can then compute
\begin{equation*}
p_{\xi _{0}}^{H} (h(r,t)) = \exp (rY) (I_{d}+ \sum _{j=2}^{d} t_{j} X_{j}^{T})
\left (
\begin{array}{c}
1
\\
0
\\
\vdots
\\
0
\end{array}
\right ) = \left (
\begin{array}{c}
e^{r}
\\
e^{\lambda _{2}r} t_{2}
\\
\vdots
\\
e^{\lambda _{d} r} t_{d}
\end{array}
\right ) ~.
\end{equation*}
The analogous calculation for $H'$ yields
\begin{equation*}
p_{\xi _{0}}^{H'} (h'(r,t)) = \left (
\begin{array}{c}
e^{r}
\\
e^{\lambda _{2}'r} t_{2}
\\
\vdots
\\
e^{\lambda _{d}' r} t_{d}
\end{array}
\right )
\end{equation*}
Throughout this section, let
$\phi = (p_{\xi _{0}}^{H'})^{-1} \circ p_{\xi _{0}}^{H} : H \to H'$. Then
a comparison of the formulae for
$p_{\xi _{0}}^{H} (h(r,t)), p_{\xi _{0}}^{H'}(h'(r,t))$ yields
%
\begin{equation}
\label{eqn:phi_shearlet_concrete}
\phi (h(r,t)) = h'(r,t')~,~ t' = (\exp (r(\lambda _{2}-\lambda _{2}'))
t_{2},\ldots , \exp (r (\lambda _{d}-\lambda _{d}')) t_{d})~.
\end{equation}
Setting $r=0$ in this formula shows that the restriction
$\phi |_{S}$ in fact maps $S$ bijectively onto $S'$, and that it is independent
of the choices of $Y,Y'$.

We can now make the first step towards the proof of the theorem, which
consists in comparing the diagonals of $H$ and $H'$.
%
\begin{lemma}
\label{lem:shear_diag_gleich}
If $H$ and $H'$ are coorbit equivalent, then $Y = Y'$, and thus
$D = D'$.
\end{lemma}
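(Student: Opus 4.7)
My plan is to leverage \reftext{Theorem~\ref{thm:MainTheoremEquivalenceOfGroups}} to rephrase coorbit equivalence as the statement that the concretely computable map $\phi:(H,d_W)\to(H',d_V)$ from (\ref{eqn:phi_shearlet_concrete}) is a quasi-isometry, hence in particular uniformly bornologous. Since $\mathcal{O}=\mathcal{O}'$ and the isotropy groups are trivial by \reftext{Lemma~\ref{lem:OrbitShearletGroups}}, the hypotheses of \reftext{Theorem~\ref{thm:MainTheoremEquivalenceOfGroups}} are met. I will then suppose for contradiction that $Y \ne Y'$ and exhibit a pair of sequences in $H$ whose $d_W$-distance stays bounded while the $d_V$-distance of their $\phi$-images tends to infinity.

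To build the sequences, pick an index $j \in \{2,\ldots,d\}$ with $\lambda_j \ne \lambda_j'$, let $t^{(j)}\in\mathbb{R}^{d-1}$ denote the standard unit vector supported in the $j$-th slot, and set
\[
 g_r := h(r,0) = \exp(-rY), \qquad g_r' := h(r,t^{(j)}) = \exp(-rY)(I_d + X_j)^{-1}.
\]
Then $g_r^{-1} g_r' = (I_d + X_j)^{-1}$ is a fixed element of $S \subset H_0$, so left invariance of $d_W$ gives $d_W(g_r,g_r') = d_W(e,(I_d + X_j)^{-1})$ independently of $r$. On the $H'$-side, (\ref{eqn:phi_shearlet_concrete}) together with left invariance of $d_V$ yields
\[
 d_V\bigl(\phi(g_r),\phi(g_r')\bigr) \;=\; d_V\bigl(e,\,(I_d + a_r X_j')^{-1}\bigr), \qquad a_r := e^{r(\lambda_j - \lambda_j')},
\]
and since $\lambda_j - \lambda_j' \ne 0$, I can choose $r_n \to \pm\infty$ (with the appropriate sign) so that $|a_{r_n}| \to \infty$.

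The main—but modest—obstacle is then to show that $d_V(e,(I_d + aX_j')^{-1}) \to \infty$ as $|a|\to\infty$. I plan to establish this by combining two ingredients. On the group side, $V \subset H_0'$ is relatively compact, hence each $V^n$ is relatively compact in $H' \subset \mathrm{GL}(\mathbb{R}^d)$ and therefore bounded in matrix norm by some constant $C_n$. On the algebraic side, the nilpotent expansion $(I_d + aX_j')^{-1} = \sum_{k \ge 0}(-aX_j')^k$ has $(1,j)$-entry exactly $-a$: the $k=1$ term contributes $-a$, while for $k \ge 2$ the $(1,j)$-entry of $(X_j')^k$ vanishes because the first row of $X_j'$ equals $e_j^T$ and $X_j'$ is strictly upper triangular, forcing $(X_j')^{k-1}_{j,j}=0$. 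Consequently $\|(I_d + aX_j')^{-1}\|\ge|a|$, so these elements eventually leave every $V^n$, forcing $d_V(e,(I_d + aX_j')^{-1}) \to \infty$. Combined with the uniform upper bound on $d_W(g_{r_n},g_{r_n}')$, this contradicts uniform bornologousness of $\phi$, so every $\lambda_j = \lambda_j'$, i.e., $Y = Y'$ and $D = D'$.
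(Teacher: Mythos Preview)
Your argument is correct and in fact more economical than the paper's. Both proofs proceed by contradiction, assuming $\lambda_j\neq\lambda_j'$ for some $j$ and exhibiting a violation of the quasi-isometry property of $\phi$ guaranteed by \reftext{Theorem~\ref{thm:MainTheoremEquivalenceOfGroups}}, but the constructions differ. The paper takes a single element $h=h(1,e_i)$ and studies its powers $h_n=h^n$, which forces a rather involved inductive computation of $h^n$ in the $(r,t)$-coordinates (equation~\reftext{(\ref{eqn:formula_hn})}), a subsequent geometric-series evaluation of the relevant entry of $(h_{n+1}')^{-1}h_n'$, and a separate treatment of the borderline case $\lambda_i=1$. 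Your choice of the pair $g_r=h(r,0)$, $g_r'=h(r,t^{(j)})$ sidesteps all of this: since $g_r^{-1}g_r'$ is literally independent of $r$, the $d_W$-bound is immediate, and because the diagonal factors in $\phi(g_r)$ and $\phi(g_r')$ coincide (both equal $\exp(-rY')$), the $H'$-side reduces cleanly to $(I_d+a_rX_j')^{-1}$. Your observation that the $(1,j)$-entry of this inverse is exactly $-a_r$ (using that the first row of $X_j'$ is $e_j^T$ and that powers $(X_j')^{k-1}$ are strictly upper triangular) then gives divergence with no case distinction. The paper's route gains nothing over yours; your version is a genuine simplification.
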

\begin{proof}
Towards a proof by contradiction, assume that $H$ and $H'$ are coorbit
equivalent, yet there exists $2 \le i \le d$ such that
$\lambda _{i} \neq \lambda _{i}'$. Let $\phi : H \to H'$ denote the map
from Equation \reftext{(\ref{eqn:phi_shearlet_concrete})}. Let
$e_{i} \in \mathbb{R}^{d-1}$ denote the vector with entry one at the
$i-1$st position, and zeros {elsewhere}. Define the sequence
$(h_{n})_{n \in \mathbb{N}}$ as $h_{n} = h(1,e_{i})^{n}$, and
$h_{n}' = \phi (h_{n})$. If $d: H \times H \to \mathbb{R}_{0}^{+}$ denotes
a word metric on $H$, we get by left-invariance that
$d(h_{n},h_{n+1}) = d(e_{H}, h)$. We need to prove that $\phi $ is not
a quasi-isometry with respect to a suitable word metric $d'$ on $H'$. For
this it is now sufficient to show that
$(d'(h_{n}',h_{n+1}'))_{n \in \mathbb{N}} \subset \mathbb{R}^{+}$ is unbounded,
or equivalently, that the sequence $(h_{n+1}')^{-1} h_{n}' \in H'$,
$n \in \mathbb{N}$, is not contained in a relatively compact set. This
will be shown by concentrating on suitably chosen matrix entries of these
products.

We start out by noting that $h(1,e_{i}) = h(1,0) h(0,e_{i})$. For the computation
of higher powers of this group element, we introduce
$T' = \{ 0 \}^{i} \times \mathbb{R}^{d-i-1}$. Then
$t = t_{i} e_{i} + t'$, with $t' \in T'$, is a vector with $i-2$ leading
zeroes, followed by $t_{i}$, followed by the remaining entries of
$t'$.

With this notation, we get
\begin{equation*}
h(-1,0) h(0,t_{i} e_{i} + t')^{-1} h(1,0) = h(0,e^{1-\lambda _{i}} t_{i}
e_{i} + t'')^{-1}~,
\end{equation*}
with suitably chosen $t'' \in T'$ depending linearly on $t'$. To see this,
it is sufficient to note that both sides of the equation are elements of
$H$ whose first lines can be made to coincide by properly choosing
$t'' \in T'$, and this first line uniquely determines the elements of
$H$ by \reftext{Lemma~\ref{lem:OrbitShearletGroups}} and the comments thereafter.
Since conjugation by $h(-1,0)$ is a group isomorphism, we also get
%
\begin{equation}
\label{eqn:conjugation_shearlet}
h(-1,0) h(0,t_{i} e_{i} + t') h(1,0) = h(0,e^{1-\lambda _{i}} t_{i} e_{i}+t'')~,
\end{equation}
and thus for all $n \in \mathbb{N}$:
\begin{equation*}
h(n,0) h(0,e_{i}) h(-n,0) = h(0,e^{-n(1 - \lambda _{i})} e_{i})~.
\end{equation*}
Furthermore, the fact that
$X_{i} X_{j} \in {\mathrm{span}} \{ X_{k} : k > \max (i,j) \}$ yields in particular,
for all $p,q \in \mathbb{R}$ and $t_{1}',t_{2}' \in T'$
\begin{equation*}
h(0,p e_{i} + t_{1}') h(0,q e_{i} + t_{2}') = h(0,(p+q)e_{i}+t_{3}')~,
\end{equation*}
with suitably chosen $t_{3}' \in T'$. In addition, utilizing the Neumann
series for the inverse of a matrix allows to compute for every
$p \in \mathbb{R}$ and every $t_{1}' \in T'$ that
\begin{equation*}
h(0,pe_{i} + t_{1}')^{-1} = h(0,-p e_{i} + t_{2}')
\end{equation*}
with suitably chosen $t_{2}' \in T'$. We observe that an analogous formula
holds for $H'$ as well.

Let us now first consider the case $\lambda _{i} \neq 1$. Then the above
formulae allow to show inductively that
%
\begin{equation}
\label{eqn:formula_hn}
h^{n} = h(n,0) h\left (0,
\frac{e^{n(1-\lambda _{i})}-1}{e^{1-\lambda _{i}}-1} e_{i} + t_{n}'
\right )~,
\end{equation}
with suitably chosen $t_{n}' \in T'$. Plugging this into \reftext{(\ref{eqn:phi_shearlet_concrete})}
yields the following fairly explicit formula
\begin{equation*}
h_{n}' = \phi (h_{n}) = h'(n,\exp (n(\lambda _{i}-\lambda _{i}'))
\cdot \frac{e^{n(1-\lambda _{i})}-1}{e^{1-\lambda _{i}}-1} e_{i} + s_{n}')
~,
\end{equation*}
with suitably chosen $s_{n}' \in T'$. In order to prove that the sequence
of matrices $(h_{n}')^{-1} h_{n+1}'$ is not bounded in $H'$, we compute
\begin{eqnarray*}
\lefteqn{ (h_{n}')^{-1} h_{n+1}'}
\\
& = & h'\left (0,\exp (n(\lambda _{i}-\lambda _{i}')) \cdot
\frac{e^{n(1-\lambda _{i})}-1}{e^{1-\lambda _{i}}-1} e_{i} + s_{n}'
\right )^{-1} h'(1,0) \cdot
\\
& & h'\left (0,\exp ((n+1)(\lambda _{i}-\lambda _{i}')) \cdot
\frac{e^{(n+1)(1-\lambda _{i})}-1}{e^{1-\lambda _{i}}-1}e_{i} + s_{n+1}'
\right )
\\
& = & h'(1,0) h(0,r_{n} e_{i} + u_{n})
\end{eqnarray*}
where $u_{n} \in T'$, and
\begin{eqnarray*}
r_{n} & = & - e^{1-\lambda _{i}'} \exp (n(\lambda _{i}-\lambda _{i}'))
\cdot \frac{e^{n(1- \lambda _{i})}-1}{e^{1-\lambda _{i}}-1} + \exp ((n+1)(
\lambda _{i}-\lambda _{i}')) \cdot
\frac{e^{(n+1) (1-\lambda _{i})}-1}{e^{1-\lambda _{i}}-1}
\\
& = &
\frac{\exp (n(\lambda _{i} - \lambda _{i}'))}{e^{1-\lambda _{i}}-1}
\left (-e^{1-\lambda _{i}'+n(1-\lambda _{i})} + e^{1-\lambda _{i}'}+ e^{
\lambda _{i} - \lambda _{i}' + (n+1)(1-\lambda _{i})} - e^{\lambda _{i}-
\lambda _{i}'} \right )
\\
& = &
\frac{\exp (n(\lambda _{i} - \lambda _{i}'))}{e^{1-\lambda _{i}}-1}
\left ( e^{1-\lambda _{i}'} - e^{\lambda _{i} - \lambda _{i}'}
\right ) ~.
\end{eqnarray*}
Now the assumptions $\lambda _{i}' < \lambda _{i}$ and
$\lambda _{i} \neq1$ allow to conclude directly that
$|r_{n}| \to \infty $, and thus the desired conclusion can be drawn. In
the case $\lambda _{i}'> \lambda _{i}$ we can conclude similarly, by considering
the sequence $h_{n} = h^{-n}$. Hence the case
$\lambda _{i} \neq 1$ is settled. However, in the case
$\lambda _{i} = 1$, the assumption
$\lambda _{i}' \neq \lambda _{i}$ yields that
$\lambda _{i}' \neq 1$, and exchanging the roles of $H$, $H'$ in the
above argument allows again to conclude that $H$ and $H'$ are not coorbit
equivalent.
\end{proof}

The next result looks at the restriction of $\phi $ to the shearing subgroup.
Throughout the following proof, we will call a map
$\alpha : U \to V$ defined between different vector spaces $U,V$
\textit{polynomial} if the coordinates of $\alpha (X)$ with respect to any
fixed basis of $V$ depend polynomially on the coordinates of $X$, taken
with respect to any fixed basis of $U$. Clearly, this notion is independent
of the choice of bases.
%
\begin{lemma}
\label{lem:shear_conj}
If $H$ and $H'$ are coorbit equivalent, there exists a matrix
$C \in GL(\mathbb{R}^{d})$ satisfying the following two properties:
\begin{enumerate}[(ii)]
\item[(i)] The map $\phi $ from Equation \reftext{(\ref{eqn:phi_shearlet_concrete})}
fulfills $\phi (s) = C^{-1} s C $, for all $s \in S$.
\item[(ii)] The conjugation actions of $D$ and $C$ commute.
\end{enumerate}
\end{lemma}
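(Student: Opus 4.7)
I will prove the lemma in three stages: a reduction (stage 1), a quick derivation (stage 2), and the crux (stage 3).

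For stage 1, the explicit formula \reftext{(\ref{eqn:phi_shearlet_concrete})} gives $\phi(h(0,t)) = h'(0,t)$. Using $h(0,t) = (I_d + \sum t_i X_i)^{-1}$ from \reftext{(\ref{eqn:def_h})} and the analogous formula in $H'$, the hypothesis that $\phi|_S : S \to S'$ is a group homomorphism is equivalent to the multiplication laws on $S$ and $S'$ agreeing in $t$-parameters. Expanding $(I_d + T_1)(I_d + T_2) = I_d + T_1 + T_2 + T_2 T_1$, this is in turn equivalent to the statement that $X_i \mapsto X_i'$ extends linearly to an associative algebra isomorphism $\mathfrak{s} \to \mathfrak{s}'$. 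Adjoining $I_d \mapsto I_d$ gives an algebra isomorphism between $\mathfrak{a} = \mathbb{R} I_d + \mathfrak{s}$ and $\mathfrak{a}' = \mathbb{R} I_d + \mathfrak{s}'$, which are the algebras spanned by the associated abelian admissible dilation groups $A = \mathbb{R}^+ I_d \cdot S \cup (-\mathbb{R}^+ I_d \cdot S)$ and $A'$. Applying \reftext{Lemma~\ref{lem:abel_dil_gr}}(b) supplies $C \in GL(\mathbb{R}^d)$ with $C^{-1} X_i C = X_i'$ for all $i$, so $C^{-1} h(0,t) C = h'(0,t) = \phi(h(0,t))$, establishing (i). Stage 2 follows immediately: by \reftext{Lemma~\ref{lem:shear_diag_gleich}} we have $D = D'$ and $\lambda_i = \lambda_i'$, and \reftext{(\ref{eqn:phi_shearlet_concrete})} then yields $\phi(d s d^{-1}) = d \phi(s) d^{-1}$ for all $s \in S$, $d \in D$; substituting (i) into both sides and rearranging yields \reftext{(\ref{eqn:conj_commute})}.

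Stage 3, the main obstacle, is to show $\phi|_S$ is a group homomorphism, which I plan to establish by contradiction using the quasi-isometry property of $\phi$ from \reftext{Theorem~\ref{thm:MainTheoremEquivalenceOfGroups}}. Suppose some structure constant $c_{ij}^k$ of $\mathfrak{s}$ in the canonical basis disagrees with the corresponding constant $(c_{ij}^k)'$ of $\mathfrak{s}'$; the grading $\mathfrak{s}_p \mathfrak{s}_q \subset \mathfrak{s}_{p+q-1}$ forces $k \ge i+j-1$. Fix $k$ minimal such that disagreement occurs for some $(i,j)$, and pick such a disagreeing pair. Setting $s_0 = h(0, e_i + e_j)$ (with $e_i + e_j$ read as $2 e_i$ if $i=j$), a direct Neumann-series expansion of $(I_d + \sum t_\ell X_\ell)^{-n}$ shows that the $k$-th parameter of $s_0^n \in S$ contains a nonzero $n^2$-term depending on the disagreeing structure constants, while $\phi(s_0)^n$ carries the analogous polynomial built from the $\mathfrak{s}'$-constants. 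Minimality of $k$ ensures that the parameter coordinates of $u_n := \phi(s_0)^{-n} \phi(s_0^n) \in S'$ vanish at indices below $k$, while the $k$-th coordinate grows like $n^2$.

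The contradiction should then arise from comparing two estimates of $d_V(e, u_n)$. An $O(n)$ upper bound is immediate from the quasi-isometry applied to $s_0^n$ (noting $d_W(e, s_0^n) \le n\, d_W(e, s_0)$) together with the triangle inequality on $\phi(s_0)^n$. For the lower bound, \reftext{Theorem~\ref{thm:OrbitMapQuasiIsometry}} reduces matters to the chain distance $d_{\mathcal{P}}(\xi_0, p_{\xi_0}^{H'}(u_n))$ on $\mathcal{O}$, where $p_{\xi_0}^{H'}(u_n) = (1, \tau_n)$ with $\tau_n$ vanishing in coordinates below $k$ and of order $n^2$ in coordinate $k$. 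I would then argue that any $\mathcal{P}$-chain from $\xi_0$ to $(1, \tau_n)$ must either take unamplified $S'$-steps in the $X_k'$-direction (linear cost in $|\tau_{n,k}|$, hence $\Theta(n^2)$) or use dilations, which however act anisotropically on all coordinates simultaneously and cannot be used for free because the lower coordinates of $u_n$ vanish. This should yield a super-linear lower bound $d_{\mathcal{P}}(\xi_0, (1, \tau_n)) \gtrsim n^{\alpha}$ for some $\alpha > 1$, contradicting the $O(n)$ upper bound and forcing the structure constants to coincide. The heart of the technical difficulty lies precisely in this covering-metric lower bound, where the interplay between $D$-dilations and $S'$-translations in the anisotropic induced covering of $\mathcal{O}$ must be controlled by exploiting the minimality of $k$.
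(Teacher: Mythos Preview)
Your Stages 1 and 2 are correct, and Stage 2 is in fact cleaner than the paper's treatment of part (ii): once $Y=Y'$ is known, the identity $\phi(dsd^{-1})=d\,\phi(s)\,d^{-1}$ drops out of \reftext{(\ref{eqn:phi_shearlet_concrete})} exactly as you say, and \reftext{(\ref{eqn:conj_commute})} follows by substitution. The paper instead runs a second ``bounded polynomial'' argument for (ii), so your derivation there is a genuine shortcut.

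The gap is in Stage 3. The super-linear lower bound on $d_V(e,u_n)$ that you need is, in general, false. Concretely, suppose $\lambda_k\neq 1$. Then for any $T>0$ the element $h'(0,Te_k)\in S'$ is reached in word length $O(\log T)$ via
\[
h'(R,0)\,h'(0,e_k)\,h'(R,0)^{-1}=h'\bigl(0,e^{-R(1-\lambda_k)}e_k\bigr),
\]
with $|R|\asymp\log T$; this uses only the structure relation $[Y,X_k']=(1-\lambda_k)X_k'$ and perturbs no coordinate below $k$. The same trick handles the higher coordinates of $u_n$ (which are polynomial in $n$), so $d_V(e,u_n)=O(\log n)$, which is perfectly compatible with your $O(n)$ upper bound and yields no contradiction. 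Your remark that dilations ``cannot be used for free because the lower coordinates of $u_n$ vanish'' has it backwards: vanishing lower coordinates is exactly what makes the conjugation trick clean. Only in the degenerate case where \emph{every} relevant $\lambda_j$ equals $1$ (i.e.\ $H'$ is abelian, polynomial growth) would a Euclidean-type lower bound be available, and even then you would need a separate argument.

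The paper sidesteps this entirely. Rather than estimating word lengths from below, it fixes $s\in S$, observes that
\[
\alpha(X):=\phi\bigl((I_d+X)s\bigr)^{-1}\phi(I_d+X)
\]
is a polynomial map $\mathfrak{s}\to H'$ (because $\phi|_S=\iota\circ(\psi')^{-1}\circ\psi\circ\iota$ with $\psi,\psi'$ linear and $\iota$ polynomial on unipotents), and then uses the quasi-isometry to bound $d_{H'}(\alpha(X),e_{H'})\le a\,d_H(e_H,s)+b$ uniformly in $X$. A bounded polynomial on $\mathfrak{s}\cong\mathbb{R}^{d-1}$ is constant, so $\alpha(X)=\alpha(0)=\phi(s)^{-1}$, which is exactly the multiplicativity you want. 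This ``bounded polynomial $\Rightarrow$ constant'' device converts the coarse information into an exact algebraic identity without ever needing a lower bound on word length.
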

\begin{proof}
For the proof of (i), first note that equation \reftext{(\ref{eqn:phi_shearlet_concrete})}
implies that $\phi |_{S}$ maps into $S'$. We will first show that
$\phi |_{S} : S \to S'$ is in fact a group homomorphism.

Let $\mathcal{A} = {\mathrm{span}} (\{ I_{d} \} \cup \mathfrak{s} )$ and
$\mathcal{A}' = {\mathrm{span}} (\{ I_{d} \} \cup \mathfrak{s}' )$. These are
the Lie algebras of the abelian shearlet dilation groups $A,A'$ associated
to $H,H'$. Note that $p_{\xi _{0}}^{A} = \psi \circ \iota $, where
$\iota $ is the inversion map $h \mapsto h^{-1}$, and
$\psi : \mathcal{A} \to \mathbb{R}^{d}$ is the linear bijective map defined
in the proof of \reftext{Lemma~\ref{lem:abel_dil_gr}}. In the same way, we obtain
$p_{\xi _{0}}^{A'} = \psi ' \circ \iota $, with analogously defined linear
isomorphism $\psi '$.

It follows that
$\phi _{A,A'} = (p_{\xi _{0}}^{A'})^{-1} \circ p_{\xi _{0}}^{A}=
\iota \circ (\psi ')^{-1} \circ \psi \circ \iota $. The same then holds
for the restriction of this map to $S$. But by the remark immediately following
\reftext{(\ref{eqn:phi_shearlet_concrete})}, this restriction coincides with
$\phi |_{S}: S \to S'$, the restriction of the map $\phi $ associated to
$H$ and $H'$.

Now fix $s \in S$, and define the map
\begin{equation*}
\alpha : \mathfrak{s} \to H'~,~ \alpha (X) = \phi ((I_{d}+X) s)^{-1}
\phi (I_{d} +X)~.
\end{equation*}
Then $\alpha $ is a polynomial map. To see this, observe that inversion
is polynomial on the set of unipotent matrices, since the inverse of a
unipotent map can be computed by finitely many terms of a Neumann series.
Since $s$, $I_{d} +X$ and $\phi ((I_{d} +X) s)$ are all unipotent, this
finally yields that $\alpha $ is indeed a composition of polynomial maps,
hence polynomial.

Assuming that $H,H'$ are coorbit equivalent, we have that $\phi $ is a
quasi-isometry with respect to suitable word metrics $d_{H},d_{H'}$ on
$H,H'$ respectively, say with constants $a>0, b\ge 0$. Then left-invariance
of the metrics provides
\begin{eqnarray*}
d_{H'}(\alpha (X),e_{H'}) & = & d_{H'}(\phi ((I_{d}+X) s)^{-1}\phi (I_{d}+X),e_{H'})
\\
& = & d_{H'} (\phi (I_{d}+X), \phi ((I_{d}+X) s) \le a d_{H}((I_{d}+X)),(I_{d}+X)
s) + b
\\
& \le & a d_{H}(e_{H},s) + b~,
\end{eqnarray*}
which is independent of $X \in \mathfrak{s}$. Since the metric
$d_{H}$ is proper, it follows that
$\alpha : \mathfrak{s} \to \mathbb{R}^{d \times d}$ is a bounded polynomial
function, hence constant. In particular, we get
\begin{equation*}
\phi ((I_{d} + X)s)^{-1} \phi (I_{d} + X) = \alpha (X) = \alpha (0) =
\phi (s)^{-1}~,
\end{equation*}
or equivalently
\begin{equation*}
\phi ((I_{d}+X) s) = \phi (I_{d}+X) \phi (s)~.
\end{equation*}
Hence $\phi |_{S}$ is indeed a group homomorphism, and thus a group isomorphism
$S \to S'$. This implies that
$ (\psi ')^{-1} \circ \psi |_{A} : A \to A'$ is a group isomorphism. Since
$A \subset \mathcal{A}$ is dense, it follows that the linear isomorphism
$ (\psi ')^{-1} \circ \psi : \mathcal{A} \to \mathcal{A}'$ is an isomorphism
of associative algebras. Now \reftext{Lemma~\ref{lem:abel_dil_gr}}(b) applies to
provide a matrix $C \in GL(\mathbb{R}^{d})$ such that, for all
$s \in S$, $\phi (s) = C^{-1} s C$.

Proof of (ii): Part (i) has already established the existence of the matrix
$C$ conjugating $S$ into $S'$. With the notation established in equation
\reftext{(\ref{eqn:def_h})}, and using the fact that $Y=Y'$, we thus obtain for all
$r \in \mathbb{R}, t \in \mathbb{R}^{d-1}$:
\begin{equation*}
\phi (h(r,t)) = h(r,0) C^{-1} h(0,t) C~.
\end{equation*}
It remains to establish that the conjugation actions of $C$ and $D$ on
$S$ commute, if $\phi $ is a quasi-isometry. In order to see this, fix
$d = h(r,0) \in D$, and consider the map
\begin{equation*}
\beta : \mathbb{R}^{d-1} \ni t \mapsto \phi (h(0,t) d)^{-1} \phi (h(0,t))~.
\end{equation*}
Using that
\begin{equation*}
\phi (h(0,t) d) = \phi ( d \underbrace{d^{-1} h(0,t) d}_{\in S}) = d C^{-1}
d^{-1} h(0,t) d C ~,
\end{equation*}
we get
\begin{equation*}
\beta (t) = \left ( d C^{-1} d^{-1} h(0,t) d C \right )^{-1} C h(0,t) C^{-1}
= C^{-1} d^{-1} h(0,t)^{-1} d C d^{-1} C h(0,t) C^{-1}~.
\end{equation*}
Here $C$ and $d$ are fixed matrices, and the map
$t \mapsto h(0,t)^{-1}$ is polynomial, since $h(0,t)$ is unipotent. Hence
$\beta $ is a polynomial map.

In addition, if $d_{H}$ is any word metric on $H$, left-invariance yields
$d_{H}(h(0,t)d, h(0,t)) = d_{H}(d,e_{H})$, independent of $t$. Hence the
assumption that $\phi $ is a quasi-isometry implies that $\beta $ is a
bounded map, and therefore constant, i.e.
\begin{equation*}
\beta (t) = \beta (0) = d^{-1}.
\end{equation*}
Hence we obtain for all $s \in S$:
\begin{eqnarray*}
d^{-1} & = & \phi (e_{S} d)^{-1} \phi (e_{S})
\\
& = & \phi (sd)^{-1} \phi (s)
\\
& = & \phi ( d (d^{-1} s d))^{-1} \phi (s)
\\
& = & \left ( d (C^{-1} d^{-1} s d C) \right ) ^{-1} (C^{-1} s C)
\\
& = & (C^{-1} d^{-1} s^{-1} d C) d^{-1} (C^{-1} s C)
\\
& = & d^{-1} (d C^{-1} d^{-1} s^{-1} d C d^{-1}) (C^{-1} s C)~,
\end{eqnarray*}
or, equivalently,
\begin{equation*}
(C^{-1} s C) = (d C^{-1} d^{-1} s d C d^{-1}) ~,
\end{equation*}
and finally
\begin{equation*}
d^{-1} C^{-1} s C d = C^{-1} d^{-1} s d C~,
\end{equation*}
as desired.
\end{proof}

\textit{Finishing the proof of \reftext{Theorem~\ref{thm:char_ce_shearlet}}:}
$(a) \Rightarrow (b)$ is already established by the previous two lemmas.

$(b) \Rightarrow (a)$: Since $Y=Y'$, we have
$\phi (ds) = d C^{-1} s C$, for all $d \in D$. We claim that equation \reftext{(\ref{eqn:conj_commute})}
implies that $\phi $ is in fact a group homomorphism:
\begin{eqnarray*}
\phi (d_{1} s_{1} d_{2} s_{2}) & = & \phi (\underbrace{d_{1} d_{2}}_{
\in D} \underbrace{(d_{2}^{-1} s_{1} d_{2}) s_{2}}_{\in S} )
\\
& = & (d_{1} d_{2}) (C^{-1} d_{2}^{-1} s_{1} d_{2} s_{2} C)
\\
& = & (d_{1} d_{2}) (C^{-1} d_{2}^{-1} s_{1} d_{2} C) (C^{-1} s_{2} C)
\\
& \stackrel{\text{\reftext{(\ref{eqn:conj_commute})}}}{=} & (d_{1} d_{2}) (d_{2}^{-1} C^{-1}
s_{1} C d_{2}) (C^{-1} s_{2} C)
\\
& = & (d_{1} C^{-1} s_{1} C) (d_{2} C^{-1} s_{2} C)
\\
& = & \phi (d_{1} s_{1}) \phi (d_{2} s_{2})~.
\end{eqnarray*}
Hence the bijective map $\phi $ is a topological isomorphism, and therefore
a quasi-isometry. This implies that $H$ and $H'$ are coorbit equivalent.\qed

\section{Examples and applications}
\label{sect:Examples}

In this short section, we present a variety of examples selected to further illustrate
the scope and usefulness of the techniques presented in this paper. Clearly,
the full list of coverings and admissible groups listed in the introduction
of this paper can be fed into the machinery, as soon as the associated
metrics are computed.

\subsection{$\alpha $-modulation spaces and Besov spaces in dimension 1}
\label{sec6.1}

$\alpha $-modulation spaces were introduced by Gr\"obner
\cite{Groebner1992PhD}. In the following, we use results from
\cite{BorupNielsenFrameDecompositionOfDecOfSpaces} to describe the associated
coverings.

Given $0 \le \alpha < 1$, the $\alpha $-modulation covering of the real
line is given by
\begin{equation*}
\mathcal{P} = (P_{k})_{k \in \mathbb{Z}}~, P_{k} = B(k|k|^{\beta}, r |k|^{
\beta})~,
\end{equation*}
where $B(x,r)$ denotes the ball with center $x$ and radius $r$, with respect
to the standard metric, $\beta = \frac{\alpha}{1-\alpha}$, and
$r>r_{1}$, where $r_{1}$ is suitably chosen (see Section 6.1 of
\cite{BorupNielsenFrameDecompositionOfDecOfSpaces}). With this definition
in hand one sees that, up to coarse equivalence, the metric induced by
the covering can be determined for $s<t$ by
\begin{eqnarray*}
d_{\alpha}(s,t) & \asymp & 1 + \sharp \{ k \in \mathbb{Z} : s \le k |k|^{
\beta }\le t \}
\\
& \asymp & 1+ \left | |t|^{1-\alpha} - {\mathrm{sign}} (st) \cdot |s|^{1-
\alpha} \right | ~.
\end{eqnarray*}
It is straightforward to see from this that different choices of
$\alpha $ result in metrics that are not quasi-equivalent, and thus the
associated decomposition spaces, the $\alpha $-modulation spaces initially
defined in \cite{Groebner1992PhD}, are actually distinct. While we expect
this observation to be known, we have not been able to locate an easily
citable source for it.

\subsection{Wavelet coorbit spaces in dimension 2}
\label{sec6.2}

In dimension two, the admissible dilation groups have been classified up
to conjugacy and finite index subgroups in \cite{FuDiss}; the following
is a complete list of representatives, with their open dual orbits:
\begin{itemize}
\item {\textbf{Diagonal group}}
\begin{equation*}
D = \left \{ \left (
\begin{array}{c@{\quad}c}
a & 0
\\
0 & b
\end{array}
\right ) : ab \neq 0 \right \} ~,
\end{equation*}
with $\mathcal{O} = (\mathbb{R^{*}})^{2}$.
\item {\textbf{Similitude group}}
\begin{equation*}
H = \left \{ \left (
\begin{array}{c@{\quad}c}
a & b
\\
-b & a
\end{array}
\right ) : a^{2}+b^{2} > 0 \right \}
\end{equation*}
with $\mathcal{O} = \mathbb{R}^{2} \setminus \{ 0 \}$.
\item {\textbf{Shearlet group(s)}} For a fixed parameter
$c \in \mathbb{R}$,
\begin{equation*}
S_{c} = \left \{ \pm \left (
\begin{array}{c@{\quad}c}
a & b
\\
0 & a^{c}
\end{array}
\right ) : a > 0 \right \} ~,
\end{equation*}
with $\mathcal{O} = \mathbb{R}^{*} \times \mathbb{R}$.
\end{itemize}
No pair of distinct groups from this list is coorbit equivalent: The dual
orbit criterion from \reftext{Theorem~\ref{thm:MainTheoremEquivalenceOfGroups}} implies
this for the comparison of $D$ and $H$, and of either group with any of
the shearlet groups. Furthermore, \reftext{Theorem~\ref{thm:char_ce_shearlet}} applied
to the shearlet family shows that distinct members of the shearlet family
are also not coorbit equivalent. This fact was first observed in
\cite{Voigtlaender2015PHD}.

For the sake of completeness, we list the remaining classes of decomposition
spaces so far considered in dimension two:
\begin{itemize}
\item $\alpha $-modulation spaces for $0 \le \alpha < 1$, see
\cite{Groebner1992PhD};
\item curvelet and shearlet smoothness spaces
\cite{BorupNielsenFrameDecompositionOfDecOfSpaces,LabateShearletSmoothnessSpaces};
\item wave packet smoothness spaces \cite{BytVoi};
\item homogeneous and inhomogeneous anisotropic Besov spaces
\cite{Bo,FuCh}.
\end{itemize}

A full classification of these decomposition spaces is obtainable by determining coarse equivalence classes among the associated covering-induced metrics on $\mathbb{R}^2$ (or suitable open subsets $\mathcal{O} \subset \mathbb{R}^2$). 

\subsection{Shearlet coorbit spaces in higher dimensions}
\label{sec6.3}

Besides the already mentioned cases of standard and Toeplitz shearlet dilation
groups, there exists an increasing choice of alternative constructions
with distinct coorbit spaces. The remaining examples in dimension four
are described, up to conjugacy, as follows. We refer to
\cite[Example 20]{AlbertiEtAl2017} for more details. Given
$\alpha \in \{ -1,0,1 \}$, we let
\begin{equation*}
X_{2} = \left (
\begin{array}{c@{\quad}c@{\quad}c@{\quad}c}
0 & 1 & 0 & 0
\\
0 & 0 & 0 & 1
\\
0 & 0 &0 & 0
\\
0 &0 & 0 & 0
\end{array}
\right ) ~,~ X_{3,\alpha} = \left (
\begin{array}{c@{\quad}c@{\quad}c@{\quad}c}
0 & 0 & 1 & 0
\\
0&0 & 0& 0
\\
0& 0& 0& \alpha
\\
0&0 &0& 0%
\end{array}
\right )~,~ X_{4} = \left (
\begin{array}{c@{\quad}c@{\quad}c@{\quad}c}
0& 0 & 0& 1
\\
0 & 0& 0& 0
\\
0& 0& 0& 0
\\
0& 0& 0& 0%
\end{array}
\right ).
\end{equation*}
Let $\mathfrak{s}_{\alpha }= {\mathrm{span}} (X_{2},X_{3,\alpha},X_{4})$. Then
every shearing subgroup in dimension four is either conjugate to the standard
or the Toeplitz shearing subgroup, or to precisely one of
$\mathfrak{s}_{\alpha}$, $\alpha =-1,0,1$. For $\alpha = 0$, the set of
dilation subgroups compatible with $\mathfrak{s}_{\alpha}$ (or equivalently:
the set of infinitesimal generators of such groups) is a one-dimensional
manifold. For $\alpha = \pm 1$, there exists a two-dimensional manifold
of infinitesimal generators to choose from. Thus we obtain five distinct
families of shearlet dilation groups, each parametrized by a continuum
of possible choices for the associated scaling subgroup, and no two distinct
elements from any of these families are coorbit equivalent.

We finally present an example of two distinct shearlet dilation groups
that are coorbit equivalent. We let $S_{1}$ denote the shearing subgroup
of the Toeplitz shearlet dilation group in dimension $4$, i.e.,
\begin{equation*}
S_{1} = \left \{ \left (
\begin{array}{c@{\quad}c@{\quad}c@{\quad}c}
1 & t_{2} & t_{3} & t_{4}
\\
0 & 1 & t_{2} & t_{3}
\\
0 & 0 & 1 & t_{2}
\\
0 & 0 & 0 & 1
\end{array}
\right ) : t_{2},t_{3},t_{4} \in \mathbb{R} \right \}~.
\end{equation*}
With
\begin{equation*}
C =\left (
\begin{array}{c@{\quad}c@{\quad}c@{\quad}c}
1 & 0 & 0 & 0
\\
0 & 1 & 1 & 0
\\
0 & 0 & 1 & 0
\\
0 & 0 & 0 & 1
\end{array}
\right )~
\end{equation*}
one then computes for $S_{2} = C^{-1} S_{1} C$ that
\begin{equation*}
S_{2} = \left \{ \left (
\begin{array}{c@{\quad}c@{\quad}c@{\quad}c}
1 & t_{2} & t_{3} & t_{4}
\\
0 & 1 & t_{2} & t_{3}-2 t_{2}
\\
0 & 0 & 1 & t_{2}
\\
0 & 0 & 0 & 1
\end{array}
\right ) : t_{2},t_{3},t_{4} \in \mathbb{R} \right \}~,
\end{equation*}
in particular $S_{1} \neq S_{2}$. Then the associated shearlet groups
$H_{1},H_{2}$ obtained by combining $S_{1},S_{2}$ with the isotropic scaling subgroup
$D = \mathbb{R}^{+} \cdot I_{d}$ are distinct too, but they are coorbit
equivalent by \reftext{Theorem~\ref{thm:char_ce_shearlet}}.

\section{Concluding remarks}

The main purpose of this paper was to {rigorously} establish a connection
between coorbit spaces, decomposition spaces and coarse geometry, and to
demonstrate the potential of the method, using the class of shearlet coorbit
spaces. The treatment of the examples has made clear that the coarse reformulation
provides a shorthand for the treatment of potentially fairly technical
questions in the theory of decomposition spaces. To quote from the abstract
of \cite{VoigtlaenderEmbeddingsOfDecompositionSpaces}:
\textit{In a nutshell, although knowledge of Fourier analysis is required
to define and understand decomposition spaces, no such knowledge is required
if one just wants to apply the embedding results presented in this article.
Instead, one only has to study the \textbf{geometric properties of the involved
coverings}, so that one can decide the finiteness of certain sequence space
norms defined in terms of the coverings.} (Boldface added by the authors of the present paper.) The results of our paper indicate
that the pertinent geometric properties mentioned in this quote are \emph{coarse}
properties. While we put our focus on the fundamental question of classification
(deciding, when certain scales of decomposition spaces coincide), we expect
that the coarse viewpoint will also be useful in connection with other
questions in the theory of decomposition spaces.




\section*{Acknowledgement} We thank the anonymous referee for a meticulous, critical reading of our paper, and for many helpful and clarifying remarks. 

\bibliography{References}
\bibliographystyle{plain}

\end{document}